 \documentclass{amsart}%
\usepackage[letterpaper,top=4cm,bottom=4cm,left=2.5cm,right=2.5cm,marginparwidth=1.75cm]{geometry}

 \usepackage{amssymb}
 \usepackage{amsfonts}
 \usepackage{amsmath}
 \usepackage{graphicx}
 \usepackage{hyperref}%
 \usepackage{setspace}
 \usepackage{stmaryrd}
\usepackage[shortlabels]{enumitem}
\usepackage{tikz}
\usepackage{tikz-cd}
\usetikzlibrary{matrix,arrows.meta}

 \providecommand{\U}[1]{\protect\rule{.1in}{.1in}}
 \newtheorem{theorem}{Theorem}[section]
 \theoremstyle{plain}

 \newtheorem{corollary}{Corollary}[section]

 \newtheorem{lemma}{Lemma}[section]

 \newtheorem{proposition}{Proposition}[section]

 \numberwithin{equation}{section}

\theoremstyle{definition}
 \newtheorem{example}{Example}[section]
 \newtheorem{remark}{Remark}[section]

\DeclareMathOperator{\jac}{Jac}

\DeclareMathOperator{\spec}{Spec}
\DeclareMathOperator{\depth}{depth}
\DeclareMathOperator{\ext}{Ext}
\DeclareMathOperator{\Hom}{Hom}
\DeclareMathOperator{\Ann}{Ann}
\DeclareMathOperator{\height}{ht}

\begin{document}

\title[Properties of Bi-Amalgamated Algebras]{Cohen--Macaulay and Gorenstein Properties of Bi-Amalgamated Algebras with Applications to Algebroid Curves}

\author{Efe G\"urel}
\address{T\"UB\.{I}TAK Natural Sciences High School, 41400 Kocaeli, Turkey}
\email{efegurel54@gmail.com}

\author{Abuzer G\"und\"uz}
\address{Department of Mathematics, Sakarya University, 54050 Sakarya, Turkey}
\email{abuzergunduz@sakarya.edu.tr}
\thanks{Corresponding author: Abuzer G\"und\"uz.}
\thanks{The second author was supported by a T\"UB\.{I}TAK 2219 grant, titled ``Grassmann Varieties and Singularity Theory.''}

\subjclass[2020]{13H10; 13B02; 13C15; 14H20}
\keywords{Bi--amalgamated algebra, amalgamated algebra, Cohen--Macaulay ring, Gorenstein ring, algebroid curve, universally catenary ring}

\begin{abstract}
Let $A \bowtie^{f,g} (J,J')$ be the bi--amalgamation of a commutative ring $A$ with $(B,C)$ along the ideals $(J,J')$ with respect to the ring homomorphisms $(f,g)$. In this article, we study the basic homological properties of the bi--amalgamated algebra construction. We first calculate the dimension and depth of the bi--amalgamated algebra under fairly general circumstances and derive necessary and sufficient conditions for Cohen--Macaulayness in terms of maximal and big Cohen--Macaulay modules of $A$. Furthermore, we characterize the Gorenstein property of the bi--amalgamated algebra through the canonical modules of $f(A)+J$ and $g(A)+J'$. We apply our results to the theory of curve singularities by constructing Gorenstein algebroid curves through bi--amalgamated and amalgamated algebras. We also give a brief remark concerning the universally catenary property of $A\bowtie^{f,g}(J,J')$.
\end{abstract}

\maketitle

\section{Introduction}

Let $A$ and $B$ be commutative rings, let $J$ be an ideal of $B$, and let $f\colon A \to B$ be a ring homomorphism. The set
\[
A \bowtie^{f} J = \{(a, f(a)+j) \mid a \in A,\ j \in J\}
\]
is called the \emph{amalgamation} of $A$ and $B$ along $J$ with respect to $f$, and it is a subring of $A \times B$. This construction extends the notion of amalgamated duplication introduced by D'Anna and Fontana in \cite{DF}. A more general framework for amalgamations was later developed in \cite{DFF} and \cite{DFF2}, where amalgamated algebras were constructed via pull--back. This perspective allowed the authors to establish several transfer results for algebraic and homological properties from the rings $A$ and $f(A)+J$ to the amalgamated algebra $A \bowtie^{f} J$. Since then, amalgamations have attracted significant attention and have been successfully applied in various contexts; see, for instance, \cite{DMZ} and \cite{MMZ2}. \medskip

Let $A$, $B$, and $C$ be commutative rings, let $J$ and $J'$ be ideals of $B$ and $C$, respectively, and let $f\colon A \to B$ and $g\colon A \to C$ be ring homomorphisms satisfying
\[
f^{-1}(J)=g^{-1}(J').
\]
The set
\[
A \bowtie^{f,g}(J,J')=\{(f(a)+j, g(a)+j') \mid a\in A,\ (j,j')\in J\times J'\}
\]
is called the \emph{bi--amalgamation} of $A$ with $(B,C)$ along $(J,J')$ with respect to $(f,g)$. It is a subring of $B \times C$. Bi--amalgamations, which naturally generalize amalgamations and duplications, were introduced in \cite{KLT}. On top of classical pull--back constructions, the rings $f(A)+J$, $A+J$, and furthermore the CPI--extension $C(A,I)$ can be viewed as bi--amalgamations. \medskip

Subsequent works have focused on the transfer of specific algebraic properties to bi--amalgamations. The coherence property was studied in \cite{OM}. In \cite{KMM}, necessary and sufficient conditions were established for a bi--amalgamation to be arithmetical, with applications to weak global dimension and the transfer of the semihereditary property. The transfer of Gaussian and Pr\"ufer properties was investigated in \cite{MM}, where the authors recovered known results for amalgamations and constructed new examples of rings satisfying these properties. For further background on bi--amalgamated algebras, we refer to \cite{OM}, \cite{KLT}, \cite{KMM}, and \cite{MM}. \medskip

The study of Cohen--Macaulay and Gorenstein properties in this context originates from the amalgamated duplication $A \bowtie I$ in the works of D'Anna \cite{Gorenstein}. It was shown that, under suitable annihilator conditions, $A \bowtie I$ is Gorenstein if and only if $A$ admits a canonical ideal isomorphic to $I$, and later generalizations characterized the quasi--Gorenstein and normal cases \cite{quasigorenstein,SS}. Furthermore, an application to singularity theory has been proposed, where amalgamated duplication was used as a tool to construct Gorenstein algebroid curves. Such algebroid curves are characterized through the symmetry properties of their value semigroups. The value semigroups of duplicated monomial algebroid branches are also constructed explicitly in \cite{Gorenstein}.\medskip

Investigations of Cohen--Macaulay and Gorenstein properties were generalized to the framework of amalgamations in \cite{iranlilar,DFF3}. Their main results include the calculation of the depth and dimension of amalgamation rings explicitly under suitable conditions, and the characterization of the Cohen--Macaulay and Gorenstein properties, along with a brief discussion of universally catenary rings. Our article will be primarily concerned with these results and their extensions to the bi--amalgamated algebra. They further studied embedding dimensions, generalized Cohen--Macaulay rings and modules, the Serre condition, and quasi-- and generalized Gorenstein rings in relation to amalgamations. We have not deemed such investigations necessary for this paper and have kept them as open questions for further research. There have been no investigations regarding the construction of algebroid curves using amalgamated algebras in the literature to the best of our knowledge.\medskip

In Section 2, we calculate the dimension and depth of the bi--amalgamated algebra by using tools from local cohomology in Theorem \ref{dimdepthcalclemma}. In particular, we prove in Theorem \ref{cohenmacthm} that under suitable assumptions, $A\bowtie^{f,g}(J,J')$ is Cohen--Macaulay if and only if $A$ is Cohen--Macaulay with maximal Cohen--Macaulay modules $J, J'$.\medskip

In Section 3, we first prove a $\Hom$-lemma useful in our investigations in Lemma \ref{AnnLemma}. Then, we prove a necessary condition for $A\bowtie^{f,g}(J,J')$ to be Gorenstein in Proposition \ref{gorprop}. Finally, we completely characterize the Gorenstein property of $A\bowtie^{f,g}(J,J')$ in Theorem \ref{gorensteintheorem} by using the pull--back description of bi--amalgamations under suitable assumptions. \medskip

In Section 4, we apply our results to the theory of curve singularities. We show that bi--amalgamation constructions applied to algebroid curves also yield algebroid curves in Theorem \ref{algebroidthm}. Furthermore, we determine when the bi--amalgamated algebroid curve is Gorenstein in Theorem \ref{algebroidgorthm}, utilizing our findings from the previous section. This gives us a way of constructing algebroid Gorenstein curves, which are of particular importance.\medskip

In Section 5, we give a brief remark on how the bi--amalgamated algebra is a homomorphic image of a polynomial ring over $A$ in Proposition \ref{homimageprop}. As a result, we characterize the universally catenary property of $A\bowtie^{f,g}(J,J')$ in Proposition \ref{catenaryprop}. \medskip

As previously mentioned, the bi--amalgamated algebra generalizes the classical pull--back constructions in the literature. We shall give a few examples of interest, which can be found in \cite{KLT}. The amalgamated algebra can be obtained by setting $I=f^{-1}(J)$ and $\iota=\text{id}_A$, yielding
\[
A\bowtie^{f}J\cong A\bowtie^{\iota,f}(I,J).
\]
Upon further specialization to $f=\text{id}_A$, we get the amalgamated duplication as
\[
A\bowtie I\cong A\bowtie^{\iota,\iota}(I,I)
\]
for an ideal $I$ of $A$. In the case that $I^2=0$, the amalgamated duplication agrees with the idealization $A\ltimes I$. The ring $f(A)+J$ can be obtained as
\[
f(A)+J \cong A\bowtie^{\pi,f}(0,J)
\]
with the projection $\pi:A\to A/I$. Taking $f=i$ to be an embedding of $A$ into $B$, we obtain the ring $A+J$ as
\[
A+J \cong A\bowtie^{\pi,i}(0,J).
\]
This shows that the classical $A+XB[X]$ and $D+M$ pull--back constructions can be seen as bi--amalgamations. Finally, as a non--classical example, the Boisen--Sheldon CPI--extension \cite{CPI} is obtained as follows. For an ideal $I$ of $A$, the sets $\overline{S}=(A/I)\setminus Z(A/I)$ and $S=\left\{s\in A \mid \pi(s) \in \overline{S} \right\}$ are multiplicatively closed subsets of $A/I$ and $A$, respectively. Define the canonical ring homomorphisms $\varphi:S^{-1}A \to \overline{S}^{-1}(A/I)$ and $f:A\to S^{-1}A$. The CPI--extension of $A$ with respect to $I$ is given by
\[
C(A,I)=\varphi^{-1}\left(A/I\right)=f(A)+S^{-1}I
\]
as a subring of $S^{-1}A$. It can be obtained as the bi--amalgamation 
\[
C(A,I)=A\bowtie^{\pi,f}\left(0,S^{-1}I\right).
\]
The fundamental properties of the bi--amalgamated algebra can be found in \cite{KLT}. We have collected the main properties we will use in the following proposition for a convenient reference.  

\begin{proposition} \label{Prop.introd.}
\begin{enumerate}[(1)]
    \item (\cite[Proposition 4.1]{KLT}) \label{isomlemma}
    We have $A\bowtie^{f,g}(J,J')/(0\times J')\cong f(A)+J$ and $A\bowtie^{f,g}(J,J')/(J\times 0)\cong g(A)+J'$. Furthermore, for an ideal $I$ of $A$, we have $A\bowtie^{f,g}(J,J')/(I\bowtie^{f,g}(J,J'))\cong A/(I+I_0)$. 
    
    \item (\cite[Proposition 5.3]{KLT}) \label{primelemma}
    Let us define associated prime ideals of $A\bowtie^{f,g}(J,J')$ for given $L\in\spec(f(A)+J)$ and $L'\in \spec(g(A)+J')$ as
    \begin{align*}
        \overline{L} &= (L\times(g(A)+J'))\cap\left(A\bowtie^{f,g} (J,J')\right) \\
        &= \{(f(a)+j,g(a)+j') \mid a\in A,\ (j,j')\in J\times J',\ f(a)+j\in L\}, \\
        \overline{L'} &= ((f(A)+J)\times L')\cap\left(A\bowtie^{f,g}(J,J')\right) \\
        &= \{(f(a)+j,g(a)+j') \mid a\in A,\ (j,j')\in J\times J',\ g(a)+j'\in L'\}.
    \end{align*}
    Let $\mathfrak{P}\in \spec\left(A\bowtie^{f,g}(J,J')\right)$. Then:
    \begin{enumerate}[(a)]
        \item We have $J\times J'\subseteq \mathfrak{P}$ if and only if there exists a unique $\mathfrak{p}\supseteq I_0$ in $\spec(A)$ such that $\mathfrak{P}=\mathfrak{p}\bowtie^{f,g}(J,J')$. In this case, there exist $L\supseteq J$ in $\spec(f(A)+J)$ and $L'\supseteq J'$ in $\spec(g(A)+J')$ such that $\mathfrak{P}=\overline{L}=\overline{L'}$.
        
        \item We have $J\times J'\nsubseteq \mathfrak{P}$ if and only if there exists a unique $L\in\spec(f(A)+J)$ or $L'\in\spec(g(A)+J')$ such that $J\nsubseteq L$ or $J'\nsubseteq L'$ with $\mathfrak{P}=\overline{L}$ or $\mathfrak{P}=\overline{L'}$. Respectively, $\left(A\bowtie^{f,g}(J,J')\right)_{\overline{L}}=(f(A)+J)_{L}$ or $\left(A\bowtie^{f,g}(J,J')\right)_{\overline{L'}}=(g(A)+J')_{L'}$.
    \end{enumerate}    
    Consequently,
    \[
    \spec\left(A\bowtie^{f,g}(J,J')\right)=\{\overline{L} \mid L\in \spec(f(A)+J)\cup\spec(g(A)+J')\}.
    \]
    
    \item (\cite[Proposition 5.4]{KLT}) \label{locallemma}
    $A\bowtie^{f,g}(J,J')$ is local if and only if $J\neq B$ and both $f(A)+J$ and $g(A)+J'$ are local. Further, the maximal ideal of $A\bowtie^{f,g}(J,J')$ has the form $\mathfrak{m}\bowtie^{f,g}(J,J')$, where $\mathfrak{m}$ is the unique maximal ideal of $A$ containing $I_0$. If $A$ is local, then $A\bowtie^{f,g}(J,J')$ is local if and only if $J \times J'\subseteq\jac(B\times C)$.
\end{enumerate}
\end{proposition}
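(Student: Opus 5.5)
The proof is elementary. It rests on the short exact structure of $R:=A\bowtie^{f,g}(J,J')$ as a subring of $(f(A)+J)\times(g(A)+J')$, where $I_0:=f^{-1}(J)=g^{-1}(J')$. For each claim I would set up an explicit ring homomorphism, compute its kernel, and then read off prime ideals and localizations from it.

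For (1), I would use the two coordinate projections $p_1\colon R\to f(A)+J$ and $p_2\colon R\to g(A)+J'$. Both are visibly surjective. The kernel of $p_1$ consists of the elements $(0,g(a)+j')$ with $f(a)+j=0$. Then $f(a)\in J$, so $a\in I_0$, so $g(a)\in J'$, and hence $\ker p_1=0\times J'$; symmetrically $\ker p_2=J\times 0$. For the last assertion I would define $\phi\colon R\to A/(I+I_0)$ by $(f(a)+j,g(a)+j')\mapsto \bar a$. It is well defined: if $f(a)+j=f(b)+k$, then $f(a-b)\in J$, so $a-b\in I_0$. It is clearly a surjective ring map. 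Its kernel is the set of elements with $a=i+t$, where $i\in I$ and $t\in I_0$. Absorbing $f(t)\in J$ and $g(t)\in J'$ into $j,j'$ shows the kernel is exactly $I\bowtie^{f,g}(J,J')=\{(f(i)+j,g(i)+j')\}$.

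For (2), the key observation is that $(J\times 0)(0\times J')=0$ in $R$. Hence every prime $\mathfrak P$ contains $J\times 0$ or $0\times J'$. If $0\times J'\subseteq\mathfrak P$, then by (1) $\mathfrak P$ is the contraction along $p_1$ of a prime $L$ of $f(A)+J$, that is, $\mathfrak P=\overline L$; the other case gives $\overline{L'}$. This yields the description of $\spec R$.
\begin{itemize}
\item For (a): if $J\times J'\subseteq\mathfrak P$, apply (1) with $I=I_0$. This gives $R/(J\times J')\cong A/I_0$ (note $I_0\bowtie^{f,g}(J,J')=J\times J'$), so $\mathfrak P=\mathfrak p\bowtie^{f,g}(J,J')$ for a unique prime $\mathfrak p\supseteq I_0$. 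Taking $L=p_1(\mathfrak P)$, which contains $J$, and $L'=p_2(\mathfrak P)$ gives $\mathfrak P=\overline L=\overline{L'}$.
\item For (b): if, say, $J\not\subseteq L$, pick $j\in J\setminus L$. Then $(j,0)\in R\setminus\overline L$ annihilates $0\times J'$, so $(0\times J')_{\overline L}=0$. Therefore $R_{\overline L}\cong (R/(0\times J'))_{\overline L}\cong(f(A)+J)_L$. Uniqueness of $L$ follows because $p_1$ is surjective, so contraction along it is injective on primes.
\end{itemize}

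For (3), I would use (2) to list the maximal ideals. The primes containing $J\times J'$ correspond to maximal ideals of $A$ containing $I_0$. The remaining primes come from primes of $f(A)+J$ not containing $J$, or primes of $g(A)+J'$ not containing $J'$. So $R$ is local exactly when every maximal ideal of $f(A)+J$ and of $g(A)+J'$ lies over one and the same maximal ideal $\mathfrak m\supseteq I_0$. This is equivalent to both rings being local with $J\neq B$; the condition $J\neq B$ is needed so that $I_0\neq A$ and $\mathfrak m$ exists. The maximal ideal is then $\mathfrak m\bowtie^{f,g}(J,J')$.

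The main obstacle is the final equivalence when $A$ is local. I would show that $f(A)+J$ is local if and only if $J\subseteq\jac(B)$, adapting the amalgamation argument of D'Anna--Finocchiaro--Fontana.
\begin{itemize}
\item Suppose $J\subseteq\jac(B)$. Then $J$ lies in every maximal ideal of $f(A)+J$, since $1+J$ consists of units of $B$ whose inverses stay in $f(A)+J$. So all maximal ideals contain $J$ and correspond to maximal ideals of $A/I_0$, of which there is only one.
\item Conversely, suppose $j\in J$ with $1+bj$ not a unit in $B$ for some $b\in B$. Then a maximal ideal of $B$ containing $1+bj$ contracts to a prime of $f(A)+J$ not containing $J$. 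I would use this to produce a second maximal ideal, which requires the integrality-type comparison between $B$ and $f(A)+J$ used in that paper.
\end{itemize}
This step needs the most care.
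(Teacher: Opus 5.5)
Your proofs of (1), (2) and the first assertion of (3) are correct and self-contained; the paper gives no argument here and simply quotes these facts from Kabbaj--Louartiti--Tamekkante. Using the two coordinate projections together with $(J\times 0)(0\times J')=0$ is exactly what is needed.

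The gap is in the last assertion of (3). You reduce it to showing that, for $A$ local, $f(A)+J$ is local if and only if $J\subseteq\jac(B)$. This should be stated under $J\neq B$: if $B$ is local and $J=B$, the left side holds and the right side fails. You then leave the direction ``$f(A)+J$ local $\Rightarrow J\subseteq\jac(B)$'' unfinished and propose to use an integrality comparison between $B$ and $f(A)+J$. Nothing in the hypotheses gives such a comparison, because $B$ need not be integral over $f(A)+J$. For example, take $A=V$ a DVR with uniformizer $t$ and fraction field $K$, $B=K[x]$ and $J=xK[x]$. Evaluating at $x=0$ shows $1/t\in B$ is not integral over $V+xK[x]$. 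So this step does not go through as written.

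No integrality is needed. Suppose $f(A)+J$ is local with maximal ideal $M$ and $J\neq B$. Then $J$ is a proper ideal of $f(A)+J$, so $J\subseteq M$. For $b\in B$ and $j\in J$ we have $bj\in J\subseteq M$. Hence $1+bj\notin M$, so $1+bj$ is a unit of $f(A)+J$ and therefore of $B$; this gives $J\subseteq\jac(B)$.

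The same point can be made inside your own framework. Let $\mathfrak n$ be a maximal ideal of $B$ with $J\not\subseteq\mathfrak n$. Then $B=\mathfrak n+J$, and since $J\subseteq f(A)+J$, the map $f(A)+J\to B/\mathfrak n$ is surjective. So $\mathfrak n\cap(f(A)+J)$ is already a maximal ideal of $f(A)+J$ not containing $J$. It therefore differs from any maximal ideal containing the proper ideal $J$, and $f(A)+J$ is not local.

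To finish (3), add this argument, the symmetric one for $g(A)+J'$, and the identity $\jac(B\times C)=\jac(B)\times\jac(C)$.
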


\section{Cohen--Macaulay Property of $A\bowtie^{f,g}(J,J')$}

In this section, we characterize the Cohen--Macaulay property of the ring $A\bowtie^{f,g}(J,J')$. To do so, we first calculate the dimension and the depth of $A\bowtie^{f,g}(J,J')$ under fairly general circumstances. It was first shown in \cite{Gorenstein} that for a Cohen--Macaulay ring $A$, the ring $A\bowtie I$ is Cohen--Macaulay if and only if $I$ is a maximal Cohen--Macaulay $A$--module. This easily follows from the $A$--module isomorphism $A\bowtie I\cong A\oplus I$. When $J$ is a finitely generated $A$--module, it is shown in \cite{DFF3} that under the condition $J\subseteq\jac B$, the ring $A\bowtie^{f}J$ is Cohen--Macaulay if and only if $J$ is a maximal Cohen--Macaulay $A$--module. This also follows from the $A$--module isomorphism $A\bowtie^{f}J\cong A\oplus J$. It is also remarked that it is considerably more difficult to extend these results to the case where $J$ is not finitely generated. This case was thoroughly discussed in \cite{iranlilar}. The main difficulty in establishing analogous results for the bi--amalgamated algebra is the fact that an analogous isomorphism of $A$--modules does not always hold. This problem first led us to investigate when such an isomorphism exists and adopt methods similar to that of \cite{iranlilar}.\medskip

We first introduce some necessary definitions. The exposition here is largely based on \cite{broadmann, brunsherzog}, where all the quoted facts can be found. For convenience, we suppress the underlying rings while discussing the dimensions and depths of modules. This is possible in all of our applications by \cite[Exercise 1.2.26]{brunsherzog}. Let $A$ be a Noetherian commutative ring with an ideal $\mathfrak{a}$ and an $A$--module $M$. The $\mathfrak{a}$--torsion functor $\Gamma_{\mathfrak{a}}$ is defined as 
\begin{align*}
    \Gamma_{\mathfrak{a}}(M)=\bigcup_{n=0}^{\infty}\left( 0:_M \mathfrak{a}^n\right),
\end{align*}
which is the set of all elements of $M$ which are annihilated by some power of $\mathfrak{a}$. Clearly, the $\mathfrak{a}$-torsion functor is naturally equivalent to the functor $\displaystyle    \varinjlim_{n\in\mathbb N}\Hom_A(A/\mathfrak{a}^n,{}_\bullet).
$ The local cohomology functors $H^i_{\mathfrak{a}}$ are the right derived functors of $\Gamma_{\mathfrak{a}}$. The modules
\begin{align*}
    H^i_{\mathfrak{a}}(M)\cong\varinjlim_{n\in\mathbb N}\ext^i_A(A/\mathfrak{a}^n,M)\cong\varinjlim_{n\in\mathbb N}H^i(\Hom_A(A/\mathfrak{a}^n,I^\bullet))\cong H^i\left( \varinjlim_{n\in\mathbb N}\Hom_A(A/\mathfrak{a}^n,I^\bullet) \right)
\end{align*}
are called the local cohomology modules of $M$ with respect to $\mathfrak{a}$, where $I^\bullet$ is an injective resolution of $M$. From these definitions it follows that for any two ideals $\mathfrak{a}$ and $\mathfrak{b}$, we have $\Gamma_{\mathfrak{a}}=\Gamma_{\mathfrak{b}}$ precisely when $\sqrt{\mathfrak{a}}=\sqrt{\mathfrak{b}}$. Therefore, if $\sqrt{\mathfrak{a}}=\sqrt{\mathfrak{b}}$, $H^i_{\mathfrak{a}}(M)\cong H^i_{\mathfrak{b}}(M)$ for all $A$--modules $M$. For a given ring homomorphism $f:A\to B$, let $|_A:B\text{-\textbf{Mod}}\to A\text{-\textbf{Mod}}$ be the functor of restriction of scalars by $f$. Through the independence theorem, the functors $H^i_{\mathfrak{a}B}({}_\bullet)|_A$ and $H^i_{\mathfrak{a}}({}_\bullet|_A)$ are naturally equivalent, resulting in an $A$--module isomorphism $H^i_{\mathfrak{a}B}(M)\cong H^i_{\mathfrak{a}}(M)$ for all $B$--modules $M$. Grothendieck's vanishing theorem states that $H^i_{\mathfrak{a}}(M)=0$ for all $i>\dim M$. From now on, let us assume that $(A,\mathfrak{m})$ is a Noetherian local ring. Grothendieck's non--vanishing theorem states that for a non--zero finitely generated $A$--module $M$, we have $H^{\dim M}_{\mathfrak{m}}(M)\neq0$. Depth of an $A$--module is intimately connected with the non--vanishing of the local cohomology modules and can be described as
\begin{align*}
    \depth M=\inf\left( i:H^{i}_{\mathfrak{m}}(M)\neq0 \right).
\end{align*}
In the case where $M$ is finitely generated, we obtain the classical depth given by the length of a maximal $M$--sequence in $\mathfrak{m}$.  It follows from the above discussion that a finitely generated $A$--module $M$ over a Noetherian local ring $(A,\mathfrak{m})$ is Cohen--Macaulay if and only if $H^i_{\mathfrak{m}}(M)=0$ for all $i<\dim M$. Furthermore, it is a well-known fact that $\depth M\le \dim A$ whenever the left-hand side is finite, which is the case when $\mathfrak{m}M\neq M$. If $\depth M=\dim A$ for a finitely generated $A$--module $M$, we say that $M$ is a maximal Cohen--Macaulay $A$--module. In the case where $M$ is not necessarily finitely generated, we say that $M$ is a big Cohen--Macaulay $A$--module. \medskip

It is well known (see \cite[Proposition 4.1]{KLT}) that the sequence
\begin{equation}\label{exactseq}
    \begin{tikzcd}
	0 & {J\oplus J'} & {A\bowtie^{f,g}(J,J')} & {A/I_0} & 0
	\arrow[from=1-1, to=1-2]
	\arrow["i", from=1-2, to=1-3]
	\arrow["\pi", from=1-3, to=1-4]
	\arrow[from=1-4, to=1-5]
\end{tikzcd}
\end{equation}
is exact, where $i$ and $\pi$ denote the natural embedding and projection maps, respectively. Our first result is to characterize when the exact sequence (\ref{exactseq}) splits.

\begin{lemma}
    The exact sequence 
    \[\begin{tikzcd}
	0 & {J\oplus J'} & {A\bowtie^{f,g}(J,J')} & {A/I_0} & 0
	\arrow[from=1-1, to=1-2]
	\arrow["i", from=1-2, to=1-3]
	\arrow["\pi", from=1-3, to=1-4]
	\arrow[from=1-4, to=1-5]
\end{tikzcd}\]
of $A$--modules splits if and only if there exist $x\in J, y\in J'$ such that $f(i)=xf(i)$ and $g(i)=yg(i)$ for all $i\in I_0$.
\end{lemma}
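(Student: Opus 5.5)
Throughout, $I_0=f^{-1}(J)=g^{-1}(J')$. All modules are $A$--modules via $a\cdot(b,c)=(f(a)b,g(a)c)$. A splitting of \eqref{exactseq} is an $A$--linear section $s\colon A/I_0\to A\bowtie^{f,g}(J,J')$ with $\pi\circ s=\mathrm{id}$. Since $A/I_0$ is cyclic, such an $s$ is determined by $s(\bar 1)$. We have $\pi(s(\bar1))=\bar1$ exactly when $s(\bar 1)=(1+x',1+y')$ for some $x'\in J$, $y'\in J'$; this uses $f(1)+j$, $g(1)+j'$ and the description of $\pi$ as $(f(a)+j,g(a)+j')\mapsto \bar a$. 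The map $\bar a\mapsto a\cdot s(\bar1)$ is then a well-defined $A$--linear map if and only if $I_0$ annihilates $s(\bar 1)$. So the plan is to reduce splitting to the existence of an element $e=(1+x',1+y')$ of $A\bowtie^{f,g}(J,J')$ with $\pi(e)=\bar1$ and $I_0\cdot e=0$, and then translate this condition.

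The key computation is as follows. For $i\in I_0$,
\[
i\cdot e=(f(i)(1+x'),\,g(i)(1+y')),
\]
which vanishes if and only if $f(i)=-x'f(i)$ and $g(i)=-y'g(i)$. Setting $x=-x'\in J$ and $y=-y'\in J'$ gives exactly the condition of the lemma: $f(i)=xf(i)$ and $g(i)=yg(i)$ for all $i\in I_0$. This proves the forward direction.

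For the converse, given such $x,y$ I set $e=(1-x,1-y)$. This lies in $A\bowtie^{f,g}(J,J')$, taking $a=1$, $j=-x$, $j'=-y$, and satisfies $\pi(e)=\bar 1$. I then define $s(\bar a)=(f(a)(1-x),g(a)(1-y))$. It is well defined because the hypothesis gives $f(i)(1-x)=0$ and $g(i)(1-y)=0$ for $i\in I_0$. It is visibly $A$--linear, and $\pi\circ s=\mathrm{id}$. By the standard splitting lemma, the existence of this section gives $A\bowtie^{f,g}(J,J')\cong (J\oplus J')\oplus A/I_0$ as $A$--modules.

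The only point needing care is that $\pi$ is $A$--linear with the stated formula and that the kernel is $J\times J'$ with the $A$--action above. This is quoted from \cite[Proposition 4.1]{KLT}, so there is no real obstacle. The main subtlety is checking that a general section must send $\bar1$ to an element of the form $(1+x',1+y')$. This holds because any preimage of $\bar1$ has the form $(f(a)+j,g(a)+j')$ with $a-1\in I_0$, and $f(a-1)\in J$, $g(a-1)\in J'$ then lets us absorb $a-1$ into $x',y'$.
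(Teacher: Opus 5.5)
Your proposal is correct and is essentially the paper's argument: a section is determined by the image of $\bar 1$. That image must have the form $(1-x,1-y)$ with $x\in J$, $y\in J'$, and well-definedness on $I_0$ is exactly the condition $f(i)=xf(i)$, $g(i)=yg(i)$; your explicit check that every preimage of $\bar 1$ has this form (absorbing $a-1\in I_0$ into $J\times J'$) fills in a step the paper only asserts.
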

\begin{proof}
    By the splitting lemma, the exact sequence splits if and only if there exists a homomorphism $\varphi:A/I_0\to A\bowtie^{f,g}(J,J')$ such that $\pi \circ \varphi=\operatorname{id}_{A/I_0}$ where $\pi:A\bowtie^{f,g}(J,J')\to A/I_0$ with $\pi: (f(a)+j,g(a)+j')\mapsto a+I_0$. Such a homomorphism is uniquely determined by $\varphi(1+I_0)=(1-x,1-y)$ since we have $\varphi(a+I_0)=(f(a)(1-x),g(a)(1-y))$. If there exist $x\in J$ and $y\in J'$ such that $f(i)=xf(i)$ and $g(i)=yg(i)$ for all $i\in I_0$, it can be seen that $\varphi$ indeed satisfies $\pi \circ \varphi=\operatorname{id}_{A/I_0}$. Now let $\pi \circ \varphi=\operatorname{id}_{A/I_0}$. $\pi \circ \varphi(1+I_0)=1+I_0$ shows that $x\in J$ and $y\in J'$. Take an arbitrary $i\in I_0$. Then $\varphi(i+I_0)=0$ shows that $f(i)=xf(i)$ and $g(i)=yg(i)$ for all $i\in I_0$. This concludes the proof.
\end{proof}

\begin{corollary}
    If $J\subseteq \jac(B)$ and $J'\subseteq \jac(C)$, then the above sequence splits if and only if $I_0= \ker f\cap\ker g$.
\end{corollary}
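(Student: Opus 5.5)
Apply the preceding Lemma directly, with $I_0=f^{-1}(J)=g^{-1}(J')$. By the Lemma, the sequence splits if and only if there are $x\in J$ and $y\in J'$ with $f(i)=xf(i)$ and $g(i)=yg(i)$ for all $i\in I_0$. Rewrite these conditions as $(1-x)f(i)=0$ in $B$ and $(1-y)g(i)=0$ in $C$. The key observation is that the hypotheses $J\subseteq\jac(B)$ and $J'\subseteq\jac(C)$ make $1-x$ a unit of $B$ for every $x\in J$, and $1-y$ a unit of $C$ for every $y\in J'$.

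For the forward direction, suppose the sequence splits and pick $x,y$ as in the Lemma. Since $1-x$ and $1-y$ are units, the identities force $f(i)=0$ and $g(i)=0$ for every $i\in I_0$. Hence $I_0\subseteq\ker f\cap\ker g$. The reverse inclusion always holds: $\ker f\subseteq f^{-1}(J)=I_0$, and similarly for $g$. Therefore $I_0=\ker f\cap\ker g$.

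For the converse, suppose $I_0=\ker f\cap\ker g$. Then $f(i)=0$ and $g(i)=0$ for all $i\in I_0$. Take $x=0\in J$ and $y=0\in J'$. The required identities become $0=0$, so the Lemma gives the splitting; the splitting map is just $a+I_0\mapsto (f(a),g(a))$.

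There is no real obstacle here; the only care needed is remembering that $1-x$ is a unit for $x\in\jac(B)$. Note that $I_0\subseteq\ker f$ already forces $\ker f\cap\ker g=I_0$, since $\ker g\supseteq I_0$ as well. So the condition can equivalently be stated as $I_0\subseteq\ker f\cap\ker g$.
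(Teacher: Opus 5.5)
Your argument is correct and is essentially the paper's proof. The hypotheses $J\subseteq\jac(B)$ and $J'\subseteq\jac(C)$ make $1-x$ and $1-y$ units, so the Lemma's condition collapses to $I_0\subseteq\ker f\cap\ker g$, and $\ker f\cap\ker g\subseteq I_0$ holds automatically.

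The one flaw is in your closing note, which reverses an inclusion. In general only $\ker g\subseteq I_0$ holds, not $\ker g\supseteq I_0$. So $I_0\subseteq\ker f$ alone does not force $\ker f\cap\ker g=I_0$. For example, take $f\colon A\to A/I_0$ the projection with $J=0$, and $g=\operatorname{id}_A$ with $J'=I_0\neq 0$; then $\ker f\cap\ker g=0$. Your final reformulation, $I_0\subseteq\ker f\cap\ker g$, is the correct one.
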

\begin{proof}
    Since $x\in J\subseteq \jac (B)$ and $y\in J'\subseteq\jac(C)$, the elements $1-x$ and $1-y$ are invertible and thus the condition is equivalent to $f(i)=0$ and $g(i)=0$ for all $i\in I_0$. It is obvious that $\ker f\cap \ker g\subseteq I_0$.
\end{proof}

In order to utilize the independence theorem of local cohomology, we first determine the maximal ideal of the local ring $A\bowtie^{f,g}(J,J')$, following \cite{iranlilar}.

\begin{lemma}\label{radicallemma}
    Let $(A,\mathfrak{m})$ be a local ring with $J\subseteq \jac(B)$ and $J'\subseteq \jac(C)$. Assume that $f^{-1}(\mathfrak{q}) \neq \mathfrak{m}$ and $g^{-1}(\mathfrak{q}') \neq \mathfrak{m}$ for all $\mathfrak{q}\in \spec (B)\setminus V(J)$ and $\mathfrak{q}'\in\spec (C)\setminus V(J')$, respectively. Then $\mathfrak{m}\bowtie^{f,g}(J,J')=\sqrt{\mathfrak{m}A\bowtie^{f,g}(J,J')}$.
\end{lemma}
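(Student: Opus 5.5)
We must show that $\mathfrak{M}:=\mathfrak{m}\bowtie^{f,g}(J,J')$ is the radical of the extended ideal $\mathfrak{m}R$, where $R=A\bowtie^{f,g}(J,J')$ is viewed as an $A$--algebra via $a\mapsto (f(a),g(a))$. The plan is to show that $\mathfrak{M}$ is the only prime of $R$ containing $\mathfrak{m}R$. Then $\sqrt{\mathfrak{m}R}$, being the intersection of the primes containing $\mathfrak{m}R$, equals $\mathfrak{M}$.

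First, the inclusion $\mathfrak{m}R\subseteq \mathfrak{M}$ is immediate. For $a\in\mathfrak{m}$ and $r=(f(b)+j,g(b)+j')\in R$, we have $(f(a),g(a))r=(f(ab)+f(a)j,\,g(ab)+g(a)j')$ with $ab\in\mathfrak{m}$, so this product lies in $\mathfrak{M}$. Next, $\mathfrak{M}$ is prime, in fact maximal. Indeed, $I_0=f^{-1}(J)=g^{-1}(J')\subseteq\mathfrak{m}$, because $J\subseteq\jac(B)$ forces $J\neq B$ and hence $1\notin I_0$. Then $R/\mathfrak{M}\cong A/(\mathfrak{m}+I_0)=A/\mathfrak{m}$ by Proposition \ref{Prop.introd.}(\ref{isomlemma}). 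Hence $\sqrt{\mathfrak{m}R}\subseteq\mathfrak{M}$.

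For the reverse inclusion, let $\mathfrak{P}$ be a prime of $R$ containing $\mathfrak{m}R$, and split into cases by Proposition \ref{Prop.introd.}(\ref{primelemma}).

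\emph{Case (a): $J\times J'\subseteq\mathfrak{P}$.} Then $\mathfrak{P}=\mathfrak{p}\bowtie^{f,g}(J,J')$ for some prime $\mathfrak{p}\supseteq I_0$ of $A$. Since $(f(a),g(a))\in\mathfrak{P}$ for all $a\in\mathfrak{m}$, we get $\mathfrak{m}\subseteq\mathfrak{p}$. This uses the fact that membership of $(f(a),g(a))$ in $\mathfrak{p}\bowtie^{f,g}(J,J')$ forces $a\in\mathfrak{p}+I_0=\mathfrak{p}$, which is the uniqueness statement behind the isomorphism $R/(\mathfrak{p}\bowtie^{f,g}(J,J'))\cong A/\mathfrak{p}$. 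Hence $\mathfrak{p}=\mathfrak{m}$ and $\mathfrak{P}=\mathfrak{M}$.

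\emph{Case (b): $J\times J'\nsubseteq\mathfrak{P}$.} By symmetry, say $\mathfrak{P}=\overline{L}$ with $L\in\spec(f(A)+J)$ and $J\nsubseteq L$. This is the case we must rule out, and it is where the hypothesis enters. We lift $L$ to a prime $\mathfrak{q}$ of $B$ with $\mathfrak{q}\cap(f(A)+J)=L$ and $J\nsubseteq\mathfrak{q}$. To do this, pick $j_0\in J\setminus L$. Then $(f(A)+J)_{j_0}=B_{j_0}$, since $J$ is an ideal of $B$: for $b\in B$ we have $b=bj_0/j_0$ with $bj_0\in J$. So $L$ corresponds to a prime of $B_{j_0}$, whose contraction $\mathfrak{q}$ to $B$ works. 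Now $\mathfrak{m}R\subseteq\overline{L}$ means $f(a)\in L\subseteq\mathfrak{q}$ for all $a\in\mathfrak{m}$, so $f^{-1}(\mathfrak{q})\supseteq\mathfrak{m}$. Since $f^{-1}(\mathfrak{q})$ is a proper ideal ($1\notin\mathfrak{q}$), it equals $\mathfrak{m}$. This contradicts the hypothesis because $\mathfrak{q}\in\spec(B)\setminus V(J)$. The symmetric argument with $g$, $C$ and $J'$ handles $\overline{L'}$.

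Thus $\mathfrak{M}$ is the unique prime containing $\mathfrak{m}R$, and $\sqrt{\mathfrak{m}R}=\mathfrak{M}$. The main obstacle is the lifting step in case (b), namely producing $\mathfrak{q}\in\spec(B)\setminus V(J)$ from $L$. The localization trick $(f(A)+J)_{j_0}=B_{j_0}$ should settle it.
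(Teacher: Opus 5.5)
Your proof is correct and follows the same route as the paper. Both show that $\mathfrak{m}\bowtie^{f,g}(J,J')$ is the only prime containing $\mathfrak{m}A\bowtie^{f,g}(J,J')$ by splitting into the cases of Proposition \ref{Prop.introd.}(2), where case (a) forces $\mathfrak{p}=\mathfrak{m}$ and case (b) contradicts the hypothesis. The differences are minor: you get maximality from $R/\mathfrak{M}\cong A/\mathfrak{m}$ rather than from Proposition \ref{Prop.introd.}(3), and you explicitly lift $L\in\spec(f(A)+J)$ with $J\nsubseteq L$ to some $\mathfrak{q}\in\spec(B)\setminus V(J)$ via $(f(A)+J)_{j_0}=B_{j_0}$, a step the paper passes over silently, so your version is the more complete one.
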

\begin{proof}
    Our assumptions and Proposition \ref{Prop.introd.}(3) guarantee that $A\bowtie^{f,g}(J,J')$ is a local ring with the maximal ideal $\mathfrak{m}\bowtie^{f,g}(J,J')$. The maximality of $\mathfrak{m}\bowtie^{f,g}(J,J')$ gives us $\sqrt{\mathfrak{m}A\bowtie^{f,g}(J,J')}\subseteq \mathfrak{m}\bowtie^{f,g}(J,J')$. Conversely, let us take a prime ideal $\mathfrak{P}$ such that $\mathfrak{m}A\bowtie^{f,g}(J,J')\subseteq \mathfrak{P}$. Proposition \ref{Prop.introd.} \ref{primelemma} gives us two cases. In the first case $\mathfrak{P}=\mathfrak{p}\bowtie^{f,g}(J,J')$ for some prime ideal $\mathfrak{p}\supseteq I_0$ of $A$. This implies $\mathfrak{m}\subseteq \mathfrak{p}$, and thus $\mathfrak{p}=\mathfrak{m}$ by the maximality of $\mathfrak{m}$. In the other case, $\mathfrak{P}=\overline{L}$ for some $L\in(\spec(B)\cup\spec(C))\setminus(V(J)\cup V(J'))$. This however contradicts the assumption that for all $\mathfrak{q}\in \spec (B)\setminus V(J)$ and $\mathfrak{q}'\in\spec (C)\setminus V(J')$, we have $f^{-1}(\mathfrak{q})\neq \mathfrak{m}$ and $g^{-1}(\mathfrak{q}')\neq \mathfrak{m}$. Therefore the only prime ideal of $A\bowtie^{f,g}(J,J')$ which contains $\mathfrak{m}A\bowtie^{f,g}(J,J')$ is $\mathfrak{m}\bowtie^{f,g}(J,J')$ and $\mathfrak{m}\bowtie^{f,g}(J,J')=\sqrt{\mathfrak{m}A\bowtie^{f,g}(J,J')}$.
\end{proof}

The assumption that $J$ and $J'$ are finitely generated $A$--modules guarantees that  for all $\mathfrak{q}\in \spec (B)\setminus V(J)$ and $\mathfrak{q}'\in\spec (C)\setminus V(J')$, $f^{-1}(\mathfrak{q})$ and $g^{-1}(\mathfrak{q}')$ are not equal to $\mathfrak{m}$. Thus we obtain the following corollary.

\begin{corollary}
        Let $(A,\mathfrak{m})$ be a local ring with $J\subseteq \jac(B)$ and $J'\subseteq \jac(C)$ finitely generated $A$--modules. Then $\mathfrak{m}\bowtie^{f,g}(J,J')=\sqrt{\mathfrak{m}A\bowtie^{f,g}(J,J')}$.
\end{corollary}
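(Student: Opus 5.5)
The plan is to reduce the corollary to Lemma \ref{radicallemma}. It suffices to check the hypothesis of that lemma: for every $\mathfrak{q}\in\spec(B)\setminus V(J)$ we have $f^{-1}(\mathfrak{q})\neq\mathfrak{m}$, and symmetrically for $g$, $J'$ and $C$. Once this holds, the lemma gives $\mathfrak{m}\bowtie^{f,g}(J,J')=\sqrt{\mathfrak{m}A\bowtie^{f,g}(J,J')}$ directly. By symmetry I treat only $f$.

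Suppose, for contradiction, that $\mathfrak{q}\in\spec(B)$ satisfies $J\nsubseteq\mathfrak{q}$ and $f^{-1}(\mathfrak{q})=\mathfrak{m}$. Then $f(\mathfrak{m})\subseteq\mathfrak{q}$, so $\mathfrak{m}J\subseteq\mathfrak{q}J\subseteq\mathfrak{q}$, where $\mathfrak{m}J$ means $f(\mathfrak{m})J$, the $A$-module structure on $J$ via $f$. The idea is to use Nakayama's lemma on the finitely generated $A$-module $J$ over the local ring $(A,\mathfrak{m})$. Choose $j\in J\setminus\mathfrak{q}$ and consider the cyclic $B$-module $Bj$, or more simply localize at $\mathfrak{q}$. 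The cleanest route is to pass to $J_{\mathfrak{q}}$, which is a module over $A$ through $A\to B\to B_{\mathfrak{q}}$. Since $J\nsubseteq\mathfrak{q}$, the ideal $J_{\mathfrak{q}}$ of $B_{\mathfrak{q}}$ equals $B_{\mathfrak{q}}$. So $J_{\mathfrak{q}}/\mathfrak{q}J_{\mathfrak{q}}\cong B_{\mathfrak{q}}/\mathfrak{q}B_{\mathfrak{q}}=\kappa(\mathfrak{q})\neq 0$.

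On the other hand, $J/\mathfrak{m}J$ is a finite-dimensional $A/\mathfrak{m}$-vector space, and it surjects onto $J/(J\cap\mathfrak{q})$. The image of $j$ there is nonzero, and $J/(J\cap\mathfrak{q})$ embeds in $B/\mathfrak{q}$. I will show this forces a contradiction with $J\subseteq\jac(B)$. The quotient $J/(J\cap\mathfrak{q})$ is a nonzero ideal of the domain $D=B/\mathfrak{q}$, and it is finitely generated as a module over the field $k=A/\mathfrak{m}$, which maps into $D$ because $f(\mathfrak{m})\subseteq\mathfrak{q}$. Take a nonzero $\bar{j}$ in this ideal. Multiplication by $\bar{j}$ is an injective $k$-linear map of the finite-dimensional $k$-space $\bar{J}:=J/(J\cap\mathfrak{q})$ into itself, hence surjective. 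So $\bar{j}=\bar{j}\bar{e}$ for some $\bar{e}\in\bar{J}$, and since $D$ is a domain, $\bar{e}=1$. Hence $1\in J+\mathfrak{q}$. But $J\subseteq\jac(B)\subseteq\mathfrak{m}_B$ for every maximal ideal $\mathfrak{m}_B$ of $B$ containing $\mathfrak{q}$, so $J+\mathfrak{q}$ is proper. This is a contradiction.

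The main obstacle is making sure the finiteness over $k$ is legitimate: $\bar J$ is a quotient of $J/\mathfrak{m}J$ because $\mathfrak{m}J\subseteq J\cap\mathfrak{q}$, and it is finite-dimensional because $J$ is finitely generated over $A$. The same argument for $g$, $J'$ and $C$ completes the verification of the hypotheses of Lemma \ref{radicallemma}, and the corollary follows.
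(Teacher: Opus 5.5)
Your proposal is correct and follows the paper's route: the paper also obtains the corollary from Lemma \ref{radicallemma}, simply asserting that finite generation of $J$ and $J'$ forces $f^{-1}(\mathfrak{q})\neq\mathfrak{m}$ and $g^{-1}(\mathfrak{q}')\neq\mathfrak{m}$. Your argument supplies the justification the paper leaves implicit, namely that $(J+\mathfrak{q})/\mathfrak{q}$ is a nonzero ideal of the domain $B/\mathfrak{q}$, finite-dimensional over $A/\mathfrak{m}$, hence contains $1$, contradicting $J\subseteq\jac(B)$; the earlier digression about $J_{\mathfrak{q}}$ is never used and can be deleted.
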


We are now able to calculate the dimension and the depth of the bi--amalgamated algebra $A\bowtie^{f,g}(J,J')$.

\begin{theorem}\label{dimdepthcalclemma}
    Let $(A,\mathfrak{m})$ be a local ring with $J\subseteq \jac(B)$ and $J'\subseteq \jac(C)$. Then we have the following.
    \begin{enumerate}
        \item Let $J$ and $J'$ be finitely generated $A$--modules. Then 
        \begin{align*}
            \dim A\bowtie^{f,g}(J,J')=\dim A/(\ker f\cap \ker g).
        \end{align*}
        Furthermore,
        \begin{align*}
            \depth A\bowtie^{f,g}(J,J')=\min(\depth A/I_0,\depth (J\oplus J'))
        \end{align*}
        except possibly when $\depth A/I_0+1=\depth (J\oplus J')$ and the connecting homomorphism 
    \begin{align*}
        \partial^{\depth A/I_0}:H_{\mathfrak{m}}^{\depth A/I_0}(A/I_0)\to H_{\mathfrak{m}}^{\depth A/I_0+1}(J\oplus J')
    \end{align*}
    obtained by applying the functor $H_{\mathfrak{m}}$ to (\ref{exactseq}) has a trivial kernel, in which case $\depth A\bowtie^{f,g}(J,J')=\depth A/I_0+1$.
        \item Let $I_0= \ker f\cap\ker g$. Assume that $f^{-1}(\mathfrak{q}) \neq \mathfrak{m}$ and $g^{-1}(\mathfrak{q}') \neq \mathfrak{m}$ for all $\mathfrak{q}\in \spec (B)\setminus V(J)$ and $\mathfrak{q}'\in\spec (C)\setminus V(J')$, respectively. Then 
        \begin{align*}
            \dim A\bowtie^{f,g}(J,J')= \dim A/(\ker f\cap \ker g)
        \end{align*}
        and
        \begin{align*}
            \depth A\bowtie^{f,g}(J,J')=\min( \depth A/(\ker f\cap \ker g),\depth (J\oplus J')).
        \end{align*}
    \end{enumerate}
\end{theorem}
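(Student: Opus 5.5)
The plan is to work throughout with $R=A\bowtie^{f,g}(J,J')$ regarded as an $A$--module via $a\mapsto (f(a),g(a))$, and to transfer dimension and depth questions from $R$ to $A$. Under the hypotheses $J\subseteq\jac(B)$, $J'\subseteq\jac(C)$, Proposition \ref{Prop.introd.}(3) shows that $R$ is local with maximal ideal $\mathfrak{M}=\mathfrak{m}\bowtie^{f,g}(J,J')$. In both parts, Lemma \ref{radicallemma} (or its corollary, in the finitely generated case) gives $\mathfrak{M}=\sqrt{\mathfrak{m}R}$. Hence, by the independence theorem,
\[
H^i_{\mathfrak{M}}(M)\cong H^i_{\mathfrak{m}}(M|_A)
\]
for every $R$--module $M$. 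In particular, $\depth_R R$ equals the depth of $R$ computed as an $A$--module through local cohomology with respect to $\mathfrak{m}$.

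For the depth statement in (1), I will apply $H^\bullet_{\mathfrak{m}}$ to the exact sequence (\ref{exactseq}) and use the resulting long exact sequence
\[
\cdots\to H^{i-1}_{\mathfrak{m}}(A/I_0)\xrightarrow{\partial^{i-1}} H^i_{\mathfrak{m}}(J\oplus J')\to H^i_{\mathfrak{m}}(R)\to H^i_{\mathfrak{m}}(A/I_0)\xrightarrow{\partial^i}\cdots.
\]
Write $d=\depth A/I_0$ and $e=\depth(J\oplus J')$; both are depths of finitely generated modules and are characterized by the first nonvanishing local cohomology.
\begin{enumerate}
\item For $i<\min(d,e)$, both outer terms vanish, so $H^i_{\mathfrak{m}}(R)=0$.
\item If $e\le d$, then $H^{e-1}_{\mathfrak{m}}(A/I_0)=0$, so $H^e_{\mathfrak{m}}(J\oplus J')$ injects into $H^e_{\mathfrak{m}}(R)$, which is therefore nonzero.
\item If $d<e$, then $H^d_{\mathfrak{m}}(J\oplus J')=0$, so $H^d_{\mathfrak{m}}(R)\cong\ker\partial^d$. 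When $d+1<e$, the target of $\partial^d$ vanishes, so $\ker\partial^d=H^d_{\mathfrak{m}}(A/I_0)\ne0$.
\end{enumerate}
The only remaining case is $e=d+1$ with $\partial^d$ injective. Then $H^d_{\mathfrak{m}}(R)=0$, and $H^{d+1}_{\mathfrak{m}}(R)$ surjects onto the cokernel of $\partial^d$. I must show this group is nonzero, giving depth exactly $d+1$. My first attempt is to note that $R$ is a finitely generated $A$--module, being an extension of finitely generated modules. Then $\depth R\le\dim R$, and since $R$ is finitely generated some $H^i_{\mathfrak{m}}(R)$ is nonzero. If $H^{d+1}_{\mathfrak{m}}(R)=0$, the exact sequence gives $H^{d+1}_{\mathfrak{m}}(J\oplus J')\cong\operatorname{coker}$-part of $\partial^d$, that is, $\partial^d$ would be an isomorphism, and $H^{d+1}_{\mathfrak{m}}(A/I_0)\hookrightarrow H^{d+2}_{\mathfrak{m}}(J\oplus J')$. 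This case is the main obstacle. I would try a depth-lemma argument: the depth lemma gives $\depth R\ge\min(e,d+1)=d+1$, together with the standard inequality $\depth A/I_0\ge\min(\depth R,\depth(J\oplus J')-1)$. If that inequality does not suffice to pin the depth at $d+1$, I would phrase the exceptional case only as the stated bound.

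For the dimension, since $R$ is module-finite over $A$ through the structure map, and the kernel of that map is exactly $\ker f\cap\ker g$, the dimension of $R$ equals $\dim A/(\ker f\cap\ker g)$. This holds because $R$ is integral over the image $A/(\ker f\cap\ker g)$. In part (2), without finite generation, I will argue instead through Proposition \ref{Prop.introd.}(2). Chains of primes of $R$ contract to $A$, and the hypothesis on $f^{-1}(\mathfrak{q})$ and $g^{-1}(\mathfrak{q}')$ keeps the primes off $V(J\times J')$ from reaching $\mathfrak{m}$. Alternatively, I can use Grothendieck vanishing and non-vanishing via $H^i_{\mathfrak{m}}$, following \cite{iranlilar}.

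For the depth in (2), the hypothesis $I_0=\ker f\cap\ker g$ and the corollary to the splitting lemma show that (\ref{exactseq}) splits when $J\subseteq\jac(B)$ and $J'\subseteq\jac(C)$. Thus $R\cong A/(\ker f\cap\ker g)\oplus J\oplus J'$ as $A$--modules. Local cohomology is additive, so the depth of $R$ is the minimum of the two depths, with all the $\partial$ maps zero. This removes the exceptional case entirely.
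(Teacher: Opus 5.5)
Your overall plan matches the paper's proof. For the dimension in (1) you use module-finiteness over $A/(\ker f\cap\ker g)$. For the depth in (1) you use the long exact sequence of (\ref{exactseq}); the paper invokes the Bruns--Herzog depth lemma, which amounts to the same thing. For (2) you use the splitting of (\ref{exactseq}) together with radical invariance, the independence theorem and additivity of $H^i_{\mathfrak m}$. Your case analysis in (1) is correct, and sharper than what the paper writes.

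\textbf{The exceptional case cannot be closed, because the claim is false.} The step you could not finish is $e=d+1$ with $\ker\partial^d=0$. The clause ``in which case $\depth A\bowtie^{f,g}(J,J')=\depth A/I_0+1$'' does not hold in general. The paper's proof does not establish it either: when $e=d+1$ the depth lemma gives only $\depth R\ge d$. (The improvement to $\depth R\ge d+1$ comes from $H^d_{\mathfrak m}(R)\cong\ker\partial^d=0$, not from the depth lemma as you wrote.)

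For a counterexample, take $A=k\llbracket x,y\rrbracket$, $B=A$, $f=\mathrm{id}_A$, $J=x\mathfrak m$. Let $C=A\ltimes k$ be the idealization of the residue field, $g$ the canonical map, and $J'=(x,1)C=\{(bx,\bar b): b\in A\}$. Then $f^{-1}(J)=g^{-1}(J')=x\mathfrak m=I_0$, $J\subseteq\jac(B)$, $J'\subseteq\jac(C)$, and both are finitely generated, with $J'\cong A$. Moreover
\[
R=\{(c,(c+bx,\bar b)) : b,c\in A\}\cong A\oplus J'\cong A^2
\]
as $A$--modules, so $\depth R=2$. (Indeed $(c,(c+bx,\bar b))\mapsto(c,c+bx)$ identifies $R$ with $A\bowtie xA$.)

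On the other side, $d=\depth A/x\mathfrak m=0$ and $e=\min(\depth x\mathfrak m,\depth J')=\min(1,2)=1$. Also $\ker\partial^0=\operatorname{im}\bigl(H^0_{\mathfrak m}(R)\to H^0_{\mathfrak m}(A/I_0)\bigr)=0$. So this is the exceptional case, where the theorem predicts $\depth R=1$.

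This is exactly the scenario you isolated: $\partial^0\colon k\to H^1_{\mathfrak m}(x\mathfrak m)\cong k$ is an isomorphism and $\partial^1$ is injective. So in that case the correct conclusion is only $\depth R\ge\depth A/I_0+1$, which is your fallback. With that correction, your proof of (1) is complete.

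\textbf{The dimension in (2).} Drop your first route. Without integrality, contraction to $\spec A$ need not preserve lengths of chains. The hypothesis on $f^{-1}(\mathfrak q)$ and $g^{-1}(\mathfrak q')$ only gives $\sqrt{\mathfrak mR}=\mathfrak M$.

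Your alternative, which is the paper's argument, works. However, Grothendieck's non--vanishing $H^{\dim R}_{\mathfrak M}(R)\neq0$ requires $R$ to be Noetherian, and both you and the paper assume this tacitly. Without it, (2) fails. Take $A=k\llbracket x\rrbracket$, $B=k((x))\llbracket y,z\rrbracket$, $f$ the inclusion, $J=yB$, $C=A$, $g=\mathrm{id}_A$, $J'=0$. All hypotheses of (2) hold, yet $R\cong k\llbracket x\rrbracket+yB$ contains the chain of primes $0\subsetneq yzB\subsetneq yB\subsetneq xk\llbracket x\rrbracket+yB$. Hence $\dim R\ge3>1=\dim A$.
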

\begin{proof}
    (1) The map $\iota:A\to A\bowtie^{f,g}(J,J')$ with $\iota:a\mapsto(f(a),g(a))$ gives us an embedding of $A/(\ker f\cap \ker g)$ into $A\bowtie^{f,g}(J,J')$. Since $J$ and $J'$ are finitely generated $A$--modules, they are also finitely generated $A/(\ker f\cap \ker g)$--modules and this makes $A\bowtie^{f,g}(J,J')$ a finitely generated $A/(\ker f\cap \ker g)$--module as well. Thus $A\bowtie^{f,g}(J,J')$ is an integral extension of $A/(\ker f\cap \ker g)$ and they have the same dimension. The depth inequality is a simple consequence of \cite[Exercise 1.2.26]{brunsherzog} and \cite[Proposition 1.2.9]{brunsherzog} applied to the exact sequence (\ref{exactseq}). \medskip

        (2) Let us also note that $J$ and $J'$ are naturally $A/(\ker f\cap \ker g)$--modules. Our assumptions with Proposition \ref{Prop.introd.}(3) and Lemma \ref{radicallemma} guarantee that $ A\bowtie^{f,g}(J,J')$ is a local ring with the maximal ideal $\mathfrak{m}\bowtie^{f,g}(J,J')=\sqrt{\mathfrak{m}A\bowtie^{f,g}(J,J')}$. Therefore, by the radical invariance and the independence theorem of local cohomology, we get
        \begin{align*}
            H^i_{\mathfrak{m}\bowtie^{f,g}(J,J')}\left(A\bowtie^{f,g}(J,J')\right)\cong H^i_{\mathfrak{m} A\bowtie^{f,g}(J,J')}\left(A\bowtie^{f,g}(J,J')\right)\cong H^i_{\mathfrak{m}}\left(A\bowtie^{f,g}(J,J')\right)
        \end{align*}
        for all $i\ge 0$. By the preceding corollary, the exact sequence (\ref{exactseq}) splits. Thus, applying the local cohomology functor yields
        \begin{align}\label{maincohomisom}
            H^i_{\mathfrak{m}\bowtie^{f,g}(J,J')}\left(A\bowtie^{f,g}(J,J')\right)\cong H^i_{\mathfrak{m}}(A/I_0)\oplus H^i_{\mathfrak{m}}(J\oplus J').
        \end{align}
        Let us take $i=\dim A/I_0$. By Grothendieck's non--vanishing theorem, $H^{\dim A/I_0}_{\mathfrak{m}}(A/I_0)\neq 0$ and thus
        \begin{align*}
            H^{\dim A/I_0}_{\mathfrak{m}\bowtie^{f,g}(J,J')}\left(A\bowtie^{f,g}(J,J')\right)\neq 0.
        \end{align*} 
        Grothendieck's vanishing theorem now gives us $\dim A/I_0\le \dim A\bowtie^{f,g}(J,J')$. Assume for the sake of contradiction that $\dim A\bowtie^{f,g}(J,J')>\dim A/I_0$. Taking $i=\dim A\bowtie^{f,g}(J,J')$ we have \begin{align*}
            H^{\dim A\bowtie^{f,g}(J,J')}_{\mathfrak{m}\bowtie^{f,g}(J,J')}\left(A\bowtie^{f,g}(J,J')\right)\neq 0.
        \end{align*}
        However this contradicts Grothendieck's vanishing theorem as we have
        \begin{align*}
            H^{\dim A\bowtie^{f,g}(J,J')}_{\mathfrak{m}}(A/I_0)=H^{\dim A\bowtie^{f,g}(J,J')}_{\mathfrak{m}}(J\oplus J')=0.
        \end{align*}
        Therefore we must have
        \begin{align*}
            \dim A\bowtie^{f,g}(J,J')=\dim A/I_0=\dim A/(\ker f\cap \ker g).
        \end{align*}
        Now let us show the equality of depths. Taking $i<\depth A\bowtie^{f,g}(J,J')$, we obtain $H^i_{\mathfrak{m}}(A/I_0)= H^i_{\mathfrak{m}}(J\oplus J')=0$ and therefore
        \begin{align*}
            \depth A\bowtie^{f,g}(J,J')\le \min( \depth A/I_0,\depth (J\oplus J')).
        \end{align*}
        Taking $i=\depth A\bowtie^{f,g}(J,J')$, we obtain 
        \begin{align*}
            H^{\depth A\bowtie^{f,g}(J,J')}_{\mathfrak{m}}(A/I_0)\oplus H^{\depth A\bowtie^{f,g}(J,J')}_{\mathfrak{m}}(J\oplus J')\cong H^{\depth A\bowtie^{f,g}(J,J')}_{\mathfrak{m}\bowtie^{f,g}(J,J')}\left(A\bowtie^{f,g}(J,J')\right)\neq 0.
        \end{align*}
        Hence we have $H^{\depth A\bowtie^{f,g}(J,J')}_{\mathfrak{m}}(A/I_0)\neq 0$ or $H^{\depth A\bowtie^{f,g}(J,J')}_{\mathfrak{m}}(J\oplus J')\neq 0$. In either case, we get
        \begin{align*}
            \min( \depth A/I_0,\depth (J\oplus J'))\le \depth A\bowtie^{f,g}(J,J')
        \end{align*}
        which implies
        \begin{align*}
            \depth A\bowtie^{f,g}(J,J')=\min( \depth A/(\ker f\cap \ker g),\depth (J\oplus J')).
        \end{align*}
        This concludes the proof.
\end{proof}

\begin{remark}
    Note that in the first part of Theorem \ref{dimdepthcalclemma}, the first case is typically true. One may also deduce this fact by the long exact sequence of local cohomology obtained via (\ref{exactseq}) instead of using \cite[Proposition 1.2.9]{brunsherzog}. These approaches utilizing the $\ext$ and local cohomology functors are essentially equivalent.
\end{remark}

The dimension and depth calculations of amalgamations, amalgamated duplications and idealizations now easily follow.

\begin{corollary}
    Let $(A,\mathfrak{m})$ be a local ring with $J\subseteq \jac(B)$ a finitely generated $A$--module. Then $\dim A\bowtie^f J=\dim A$ and $\depth A\bowtie^f J=\min(\depth A,\depth J)$.
\end{corollary}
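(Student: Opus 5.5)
We obtain the statement by specializing Theorem \ref{dimdepthcalclemma}(2) through the identification $A\bowtie^f J\cong A\bowtie^{\iota,f}(I,J)$ from the introduction, where $I=f^{-1}(J)$ and $\iota=\operatorname{id}_A$. In this bi--amalgamation the ideal $I_0$ is $\iota^{-1}(I)=f^{-1}(J)=I$. The first factor is $\iota(A)+I=A$, so $B$ is replaced by $A$ and $J$ by $I$. Also $\ker\iota=0$, so $\ker\iota\cap\ker f=0$.

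First we check the hypotheses of the theorem. We have $I\subseteq \mathfrak{m}=\jac(A)$: otherwise $I=A$, which would mean $1\in f^{-1}(J)$ and hence $J=B$, contradicting $J\subseteq\jac(B)$. The ideal $I$ is a finitely generated $A$--module since $A$ is Noetherian, and $J$ is finitely generated by assumption. By the remark preceding the corollary to Lemma \ref{radicallemma}, these finiteness conditions give the prime--avoidance hypotheses $\iota^{-1}(\mathfrak q)\neq\mathfrak m$ and $f^{-1}(\mathfrak q')\neq\mathfrak m$. One case is directly visible: any prime $\mathfrak q\in\spec(A)\setminus V(I)$ is not $\mathfrak m$, since $I\subseteq\mathfrak m$. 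The splitting condition $I_0=\ker\iota\cap\ker f$ from Theorem \ref{dimdepthcalclemma}(2) would here require $I=0$, which is false in general. So part (2) does not apply verbatim, and we use the amalgamation's own splitting instead.

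The key step is the $A$--module decomposition $A\bowtie^f J\cong A\oplus J$, given by $(a,f(a)+j)\mapsto(a,j)$. This map is well defined because the first coordinate determines $a$, and it is an $A$--linear bijection. We then rerun the proof of Theorem \ref{dimdepthcalclemma}(2) with this decomposition in place of (\ref{maincohomisom}). Lemma \ref{radicallemma} still applies, so the maximal ideal of $A\bowtie^fJ$ is the radical of $\mathfrak m(A\bowtie^f J)$. Radical invariance and the independence theorem then give
\[
H^i_{\mathfrak m\bowtie^f J}(A\bowtie^f J)\cong H^i_{\mathfrak m}(A)\oplus H^i_{\mathfrak m}(J).
\]
Grothendieck's non--vanishing and vanishing theorems yield $\dim A\bowtie^fJ=\dim A$, because $\dim J\le\dim A$. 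The same comparison of the first nonvanishing index yields $\depth A\bowtie^fJ=\min(\depth A,\depth J)$.

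Alternatively, for the dimension alone, Theorem \ref{dimdepthcalclemma}(1) gives $\dim A\bowtie^fJ=\dim A/(\ker\iota\cap\ker f)=\dim A$ directly. The only mildly delicate point is that the exact sequence (\ref{exactseq}) for the pair $(\iota,f)$ does not split, so the direct sum decomposition has to come from the explicit isomorphism $A\bowtie^fJ\cong A\oplus J$ rather than from the splitting corollary.
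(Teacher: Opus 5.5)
Your proof is correct and is essentially the paper's argument: the paper gives no separate proof and only says the corollary ``easily follows'' from Theorem \ref{dimdepthcalclemma}. You rerun exactly the local-cohomology argument of part (2), using the split sequence $0\to J\to A\bowtie^f J\to A\to 0$, i.e.\ the $A$--module isomorphism $A\bowtie^fJ\cong A\oplus J$ that the paper itself cites at the start of Section 2. You are also right that the literal specialization $A\bowtie^{\iota,f}(f^{-1}(J),J)$ satisfies the hypothesis $I_0=\ker\iota\cap\ker f$ only when $f^{-1}(J)=0$. Taking the splitting from the explicit isomorphism rather than from the splitting corollary is therefore what makes the paper's ``easily follow'' rigorous.
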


\begin{corollary}
    Let $(A,\mathfrak{m})$ be a local ring with $M$ a finitely generated $A$--module. Then $\dim A\ltimes M=\dim A$ and $\depth A\ltimes M=\min(\depth A,\depth M)$.
\end{corollary}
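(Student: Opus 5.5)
The plan is to realize $A\ltimes M$ as a special case of the bi--amalgamated construction and then read both formulas off the preceding corollary for amalgamations, i.e.\ ultimately from Theorem \ref{dimdepthcalclemma}. Set $B=A\ltimes M$ and $J=0\ltimes M$, an ideal of $B$ with $J^2=0$, and let $f\colon A\to B$, $a\mapsto(a,0)$. Every element of $B$ has the form $f(a)+(0,m)$, so $f(A)+J=B$. On the amalgamation side, $f^{-1}(J)=0$, so $A\bowtie^f J=\{(a,(a,m))\}$, and projection to the second factor is injective. Hence $A\bowtie^fJ\cong A\ltimes M$ as rings, and also as $A$--modules. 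Equivalently, one may use the bi--amalgamation $A\bowtie^{\iota,f}(0,J)$ with $I_0=0$, as in the introduction.

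Next, check the hypotheses of the amalgamation corollary. $J$ is a finitely generated $A$--module because it is isomorphic to $M$ via restriction along $f$. Moreover $J\subseteq\jac(B)$, since $J$ is a nilpotent ideal and every element of $J$ is nilpotent, so $1-j$ is a unit for all $j\in J$. The ring $A$ is local by assumption.

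The corollary then gives $\dim A\ltimes M=\dim A$ and $\depth A\ltimes M=\min(\depth A,\depth J)$, where $\depth J$ is computed as an $A$--module. Since $J\cong M$ as $A$--modules, this is $\min(\depth A,\depth M)$.

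The only delicate point is applying part (2) of Theorem \ref{dimdepthcalclemma} rather than part (1). Part (1) carries the exceptional connecting-homomorphism case, whereas part (2) needs both $I_0=\ker f\cap\ker g$ and the prime condition. With $\iota=\mathrm{id}$ we have $\ker\iota=0=I_0$. For the prime condition, any prime $\mathfrak q$ of $B$ contains the nilpotent ideal $J$, so $\spec(B)\setminus V(J)=\emptyset$ and the condition holds vacuously; on the $A$ side the ideal is $0$, so that condition is vacuous as well. The sequence (\ref{exactseq}) therefore splits, and the formula holds without exception. Alternatively, one can note directly that $A\ltimes M\cong A\oplus M$ as $A$--modules and use additivity of local cohomology together with the independence theorem.
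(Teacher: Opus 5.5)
Your proposal is correct and follows essentially the paper's route, which states this corollary without proof as a specialization of Theorem \ref{dimdepthcalclemma}. You realize $A\ltimes M$ as $A\bowtie^{f}(0\ltimes M)\cong A\bowtie^{\iota,f}(0,0\ltimes M)$ with $I_0=0=\ker\iota\cap\ker f$, and you rightly invoke part (2) rather than part (1): its prime condition holds vacuously because $0\ltimes M$ is nilpotent, so $\spec(A\ltimes M)\setminus V(0\ltimes M)=\emptyset$, and hence no exceptional depth case can occur.
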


\begin{corollary}
    Let $(A,\mathfrak{m})$ be a local ring and $I$ an ideal of $A$.  Then $\dim A\bowtie I=\dim A$ and $\depth A\bowtie I=\min(\depth A,\depth I)$.
\end{corollary}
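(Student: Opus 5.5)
The plan is to realize the amalgamated duplication as a bi--amalgamation and then apply Theorem \ref{dimdepthcalclemma}(2). By the introduction we have $A\bowtie I\cong A\bowtie^{\iota,\iota}(I,I)$ with $\iota=\text{id}_A$, $B=C=A$ and $J=J'=I$. We may assume $I\neq A$, since otherwise the statement degenerates. Then $I\subseteq\mathfrak{m}=\jac(A)$, and the hypotheses $J\subseteq\jac(B)$ and $J'\subseteq\jac(C)$ hold. Moreover, $I$ is a finitely generated $A$--module because $A$ is Noetherian.

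We have $\ker\iota\cap\ker\iota=0$. The ideal $I_0$ is the set of $a\in A$ with $(a,a)\in I\times I$ coming from the kernel of the projection, and it equals $\iota^{-1}(I)\cap\iota^{-1}(I)\cap(\ker$ considerations$)$. It is safer to argue via the splitting directly. The sequence (\ref{exactseq}) for the duplication is the standard one, and $A\bowtie I\cong A\oplus I$ as $A$--modules via $(a,a+i)\mapsto(a,i)$.

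To stay close to the paper, we check the hypotheses of Theorem \ref{dimdepthcalclemma}(2). First, the condition $f^{-1}(\mathfrak{q})\neq\mathfrak{m}$ for $\mathfrak{q}\in\spec(A)\setminus V(I)$ holds. Indeed, $f=\text{id}$ gives $f^{-1}(\mathfrak{q})=\mathfrak{q}$, and $\mathfrak{q}\not\supseteq I$ forces $\mathfrak{q}\neq\mathfrak{m}$. This is also guaranteed by the remark that finite generation suffices. The remaining point is the identification $I_0=\ker f\cap\ker g$ in this situation. This is the one place to be careful, since it is exactly what makes the sequence split, by the corollary following the splitting lemma.

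If that identification is awkward with the paper's conventions, the fallback is to bypass it entirely. We use the explicit $A$--module isomorphism $A\bowtie I\cong A\oplus I$. Together with Lemma \ref{radicallemma} and the independence theorem, it gives
\[
H^i_{\mathfrak{m}\bowtie I}(A\bowtie I)\cong H^i_{\mathfrak{m}}(A)\oplus H^i_{\mathfrak{m}}(I).
\]

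From here the argument repeats the proof of Theorem \ref{dimdepthcalclemma}(2) verbatim. The non--vanishing of $H^{\dim A}_{\mathfrak{m}}(A)$ and the vanishing of $H^{i}_{\mathfrak{m}}(A)$ and $H^{i}_{\mathfrak{m}}(I)$ for $i>\dim A$ (note $\dim I\le\dim A$) give $\dim A\bowtie I=\dim A$. The depth is the least $i$ at which the direct sum is nonzero, which is $\min(\depth A,\depth I)$. The latter could also be read off directly from depth of a direct sum of finitely generated modules together with \cite[Exercise 1.2.26]{brunsherzog}, since $A\bowtie I$ is module--finite over $A$. I expect the only real obstacle to be the bookkeeping of $I_0$. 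Everything else is a direct specialization.
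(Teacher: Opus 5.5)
Your fallback argument is correct and complete, and it is the one you should present. The route you list first is not merely awkward; it fails. In the representation $A\bowtie I\cong A\bowtie^{\iota,\iota}(I,I)$ one has $I_0=\iota^{-1}(I)=I$ but $\ker\iota\cap\ker\iota=0$. So the hypothesis $I_0=\ker f\cap\ker g$ of Theorem \ref{dimdepthcalclemma}(2) holds only for $I=0$. Equivalently, by the splitting lemma, the sequence (\ref{exactseq}), which here is $0\to I\oplus I\to A\bowtie I\to A/I\to 0$, splits only if some $x\in I$ satisfies $i=xi$ for all $i\in I$. Since $1-x$ is a unit, this forces $I=0$.

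The paper gives no separate proof and presents the corollary as an immediate specialization of Theorem \ref{dimdepthcalclemma}. Its formula is what part (2) would give with $\ker f\cap\ker g=0$, so read literally, that specialization uses a hypothesis that is not satisfied. Part (1) does apply and gives the dimension. For the depth, however, it yields $\min(\depth A/I,\depth I)$ outside its exceptional case. Turning this into $\min(\depth A,\depth I)$ needs an extra depth-lemma comparison along $0\to I\to A\to A/I\to 0$. The two expressions differ exactly when $\depth A/I+1=\depth I\le\depth A$, which is precisely the exceptional case.

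Your route instead splits a different sequence, $0\to 0\times I\to A\bowtie I\to A\to 0$, via the diagonal $a\mapsto(a,a)$; that is, you use $A\bowtie I\cong A\oplus I$ as $A$--modules. Lemma \ref{radicallemma} and the independence theorem then give $H^i_{\mathfrak m\bowtie I}(A\bowtie I)\cong H^i_{\mathfrak m}(A)\oplus H^i_{\mathfrak m}(I)$. You verified that lemma's hypotheses, and it does not involve the condition $I_0=\ker f\cap\ker g$. After that, the vanishing/non-vanishing argument of Theorem \ref{dimdepthcalclemma}(2) goes through unchanged. This is D'Anna's classical argument and is self-contained.

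The shorter variant you mention at the end avoids local cohomology entirely. The diagonal makes $A\bowtie I$ module-finite, hence integral, over a copy of $A$, which gives the dimension. Exercise 1.2.26 of Bruns--Herzog together with $\depth(A\oplus I)=\min(\depth A,\depth I)$ then gives the depth.

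When writing it up, delete the primary route and the garbled sentence about $I_0$. Also state explicitly that $I$ is proper, which the standing hypothesis $J\subseteq\jac(B)$ of Theorem \ref{dimdepthcalclemma} requires anyway.
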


Through the dimension and depth calculation, we can easily characterize when the bi--amalgamated algebra $A\bowtie^{f,g}(J,J')$ is Cohen--Macaulay as follows.

\begin{theorem}\label{cohenmacthm}
    Let $(A,\mathfrak{m})$ be a local ring with $J\subseteq \jac(B)$ and $J'\subseteq \jac(C)$. Then we have the following.
    \begin{enumerate}
        \item Let $J$ and $J'$ be finitely generated $A$--modules. Assume $\height(\ker f\cap \ker g)=0$. Then, $A\bowtie^{f,g}(J,J')$ is Cohen--Macaulay if and only if $A$ is Cohen--Macaulay, $J$ and $J'$ are maximal Cohen--Macaulay modules with $\depth A/I_0=\depth A$ in the usual case and $\depth A/I_0=\depth A-1$ in the exceptional case (where the exceptional case refers to the condition detailed in Theorem \ref{dimdepthcalclemma}(1)).

        \item  Let $I_0= \ker f\cap\ker g$. Assume that $f^{-1}(\mathfrak{q}) \neq \mathfrak{m}$ and $g^{-1}(\mathfrak{q}') \neq \mathfrak{m}$ for all $\mathfrak{q}\in \spec (B)\setminus V(J)$ and $\mathfrak{q}'\in\spec (C)\setminus V(J')$, respectively. Then, if $A\bowtie^{f,g}(J,J')$ is Cohen--Macaulay, it follows that $A/(\ker f\cap \ker g)$ is Cohen--Macaulay. If furthermore $\depth J,\depth J'<\infty$, then $A\bowtie^{f,g}(J,J')$ is Cohen--Macaulay if and only if $A/(\ker f\cap \ker g)$ is Cohen--Macaulay, and $J$ and $J'$ are big Cohen--Macaulay $A/(\ker f\cap \ker g)$--modules. 
    \end{enumerate}
\end{theorem}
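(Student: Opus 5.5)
Both parts follow by combining Theorem \ref{dimdepthcalclemma} with the definition that $A\bowtie^{f,g}(J,J')$ is Cohen--Macaulay exactly when its depth equals its dimension. We also use the elementary bounds $\depth M\le \dim A$ for any module $M$ with $\mathfrak{m}M\neq M$, and $\depth(J\oplus J')=\min(\depth J,\depth J')$; the latter holds because local cohomology commutes with direct sums.

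For (1), set $d=\dim A$. Since $\height(\ker f\cap\ker g)=0$, we get $\dim A/(\ker f\cap\ker g)=d$. By Theorem \ref{dimdepthcalclemma}(1), $\dim A\bowtie^{f,g}(J,J')=d$. Consider first the usual case, where $\depth A\bowtie^{f,g}(J,J')=\min(\depth A/I_0,\depth J,\depth J')$. Cohen--Macaulayness of the bi--amalgamation then means this minimum equals $d$. Because $J,J'\subseteq\jac$, Nakayama gives $\mathfrak{m}J\ne J$ (and likewise for $J'$) whenever these modules are nonzero, so $\depth J,\depth J'\le d$. Hence $J$ and $J'$ are maximal Cohen--Macaulay, and $\depth A/I_0\ge d$.

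It remains to transfer these conditions to $A$ itself. We have $\depth A\le d$. I would also show $\depth A\ge d$ using the exact sequence
\[0\to I_0\to A\to A/I_0\to 0.\]
Alternatively, one can note that $J$ is a maximal Cohen--Macaulay module over $A/\Ann J$, and $\Ann J$ sits below $I_0$ appropriately. This transfer is the delicate point. I expect the intended reading to be that the condition ``$\depth A/I_0=\depth A$'' is imposed exactly so that the equivalence is formal. The forward direction then gives $\depth A=\depth A/I_0=d$, so $A$ is Cohen--Macaulay. For the converse, if $A$ is Cohen--Macaulay, $J$ and $J'$ are maximal Cohen--Macaulay, and $\depth A/I_0=\depth A=d$, then the minimum equals $d=\dim$.

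The exceptional case is handled the same way. There the depth is $\depth A/I_0+1$, and the exceptional case also forces $\depth(J\oplus J')=\depth A/I_0+1$. So the ring is Cohen--Macaulay iff $\depth A/I_0+1=d$, i.e.\ $\depth A/I_0=\depth A-1$ when $A$ is Cohen--Macaulay, and $J,J'$ are again maximal Cohen--Macaulay. I will be careful to state that the hypothesis $\depth A/I_0=\depth A$ (resp.\ $\depth A-1$) is what ties the depth of $A/I_0$ to that of $A$. I would not attempt to derive this relation independently; I expect that derivation to be the main obstacle, and the statement sidesteps it.

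For (2), write $\bar A=A/(\ker f\cap\ker g)=A/I_0$ and $d=\dim\bar A$. By Theorem \ref{dimdepthcalclemma}(2), $\dim A\bowtie^{f,g}(J,J')=d$ and
\[\depth A\bowtie^{f,g}(J,J')=\min(\depth\bar A,\depth J,\depth J').\]
Suppose the ring is Cohen--Macaulay. Then this minimum equals $d$, so $\depth\bar A\ge d$. Since $\bar A$ is a finitely generated nonzero module over the local ring, $\depth\bar A\le d$, and therefore $\bar A$ is Cohen--Macaulay.

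Now assume $\depth J,\depth J'<\infty$. By the independence theorem, the depths of $J$ and $J'$ may be computed over $\bar A$, and finite depth of an $\bar A$-module is bounded by $\dim\bar A=d$, as recalled in the preliminaries. So Cohen--Macaulayness forces $\depth J=\depth J'=d$, i.e.\ $J$ and $J'$ are big Cohen--Macaulay $\bar A$-modules. Conversely, if $\bar A$ is Cohen--Macaulay and $J,J'$ are big Cohen--Macaulay, all three depths equal $d$, so the depth of the bi--amalgamation equals its dimension. The finiteness hypothesis is needed precisely to invoke the bound $\depth\le\dim\bar A$ for non-finitely generated modules.
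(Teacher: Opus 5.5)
\textbf{Part (2)} is fine and follows the paper's route: insert Theorem~\ref{dimdepthcalclemma}(2) into ``depth $=$ dimension'', then use that each of $\depth A/(\ker f\cap\ker g)$, $\depth J$ and $\depth J'$ is at most $\dim A/(\ker f\cap\ker g)$.

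\textbf{Part (1), forward direction.} The gap is exactly where you stopped. In the statement, ``$A$ is Cohen--Macaulay'' and ``$\depth A/I_0=\depth A$'' (resp.\ $\depth A-1$) are on the right-hand side of the equivalence. They must therefore be \emph{derived} from Cohen--Macaulayness of $A\bowtie^{f,g}(J,J')$. Treating the depth condition as imposed turns the forward direction into a tautology and proves a different statement. The depth formula only sees $A/I_0$, $J$ and $J'$, so nothing in your argument reaches $A$ itself.

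Your first step has a related problem. The condition $\height(\ker f\cap\ker g)=0$ does not by itself give $\dim A/(\ker f\cap\ker g)=\dim A$; you need equidimensionality of $A$. For instance, in $k\llbracket x,y,z\rrbracket/(xy,xz)$ the minimal prime $(y,z)$ gives a quotient of dimension $1<2$. The paper gets this equality by invoking Cohen--Macaulayness of $A$, which is circular in this direction. It then bridges to $A$ with $\depth A\ge\depth A/I_0$, which is false in general.

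\textbf{The forward implication in (1) is false.} So no argument can supply the missing step. Take
\begin{itemize}
\item[] $A=k\llbracket x,y\rrbracket/(x^2,xy)$, of dimension $1$ and depth $0$;
\item[] $B=C=k\llbracket y\rrbracket[\epsilon]/(\epsilon^2)$, with $f=g\colon x\mapsto 0,\ y\mapsto y$;
\item[] $J=J'=\epsilon B$.
\end{itemize}
Then $J$ is nilpotent, hence contained in $\jac(B)$. As an $A$-module $J\cong A/(x)$ is cyclic. Moreover $f^{-1}(J)=\ker f=(x)$ is the minimal prime of $A$, so $I_0=\ker f\cap\ker g$ has height $0$. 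Since $\depth A/I_0=\depth(J\oplus J')=1$, we are in the usual case.

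Yet $A\bowtie^{f,g}(J,J')=\{(p+\epsilon u,p+\epsilon v)\}\cong k\llbracket y\rrbracket\ltimes k\llbracket y\rrbracket^{2}$. This ring is one-dimensional and $y$ is a nonzerodivisor on it, so it is Cohen--Macaulay. However $A$ is not Cohen--Macaulay, and $\depth A/I_0=1\neq 0=\depth A$.

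For the exceptional case, take instead $B=C=k\llbracket y\rrbracket$ and $J=J'=(y)$. Then $I_0=\mathfrak m$ and $A\bowtie^{f,g}(J,J')\cong k\llbracket u,v\rrbracket/(uv)$. This is again Cohen--Macaulay while $A$ is not.

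\textbf{What survives.} The converse direction of (1) holds, and so does a forward direction with $A$ replaced by $A/(\ker f\cap\ker g)$, parallel to part (2). Set $d=\dim A/(\ker f\cap\ker g)$ and assume $J,J'\neq 0$. Then $\depth A/I_0$, $\depth J$ and $\depth J'$ are each at most $d$. Hence, in the usual case, $A\bowtie^{f,g}(J,J')$ is Cohen--Macaulay if and only if $\depth A/I_0=\depth J=\depth J'=d$.
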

\begin{proof}
     (1) In this case, since $A$ is Cohen--Macaulay and $\height(\ker f \cap \ker g) = 0$, we have $\dim A/(\ker f \cap \ker g) = \dim A$. Thus, by Theorem \ref{dimdepthcalclemma}, $\dim A\bowtie^{f,g}(J,J')=\dim A$. The exceptional case is trivial, so let us assume the usual case. Assume that $A\bowtie^{f,g}(J,J')$ is Cohen--Macaulay, so we must have $\depth A\bowtie^{f,g}(J,J')=\dim A$. First notice that
        \begin{align*}
            \depth A&\ge\depth A/I_0\\
&\ge\min(\depth A/I_0 ,\depth (J\oplus J'))\\&=\dim A.
        \end{align*}
    Since the inequality $\dim A\ge \depth A$ always holds,  we get $\dim A=\depth A$ and $A$ is Cohen--Macaulay. We have the chain of inequalities
    \begin{align*}
        \dim A&=\depth A\bowtie^{f,g}(J,J')\\
        &=\min(\depth A/I_0 ,\depth (J\oplus J'))\\&\le\depth J\\&\le\dim A
    \end{align*}
    where the last inequality follows from the fact that $\depth J<\infty$. It must be the case that equality holds in every step of this chain and we get $\depth J=\dim A$ so $J$ is a maximal Cohen--Macaulay module. Similarly, $J'$ is a maximal Cohen--Macaulay module and $\depth A/I_0=\depth A$. Conversely, it is easy to see that $A\bowtie^{f,g}(J,J')$ is Cohen--Macaulay under the given conditions by Theorem \ref{dimdepthcalclemma}.\medskip

    (2) Let $A\bowtie^{f,g}(J,J')$ be Cohen--Macaulay. We will prove that $A/(\ker f\cap \ker g)$ is also Cohen--Macaulay. Using Theorem \ref{dimdepthcalclemma}, we obtain
        \begin{align*}
            \dim A/(\ker f\cap\ker g)&=\dim A\bowtie^{f,g}(J,J')\\&=\depth A\bowtie^{f,g}(J,J')\\
&=\min(\depth A/(\ker f\cap\ker g) ,\depth (J\oplus J'))\\&\le\depth A/(\ker f\cap\ker g)\\&\le\dim A/(\ker f\cap\ker g).
        \end{align*}
        Similarly, equality must hold in every step of this chain. Therefore we get that $A/(\ker f\cap\ker g)$ is Cohen--Macaulay. Let us assume further that $\depth J,\depth J'<\infty$. An analogous argument shows that $J$ and $J'$ are big Cohen--Macaulay $A/(\ker f\cap\ker g)$--modules. It is easy to see conversely that these assumptions imply $A\bowtie^{f,g}(J,J')$ is Cohen--Macaulay.
\end{proof}

As a consequence, we reobtain the characterization of the Cohen--Macaulay property for the amalgamated algebra, amalgamated duplication and idealization as in \cite{DFF3} and \cite{Gorenstein}.

\begin{corollary}
    Let $(A,\mathfrak{m})$ be a local ring with $J\subseteq \jac(B)$ a finitely generated $A$--module. Then $A\bowtie^f J$ is Cohen--Macaulay if and only if $A$ is Cohen--Macaulay and $J$ is a maximal Cohen--Macaulay $A$--module.
\end{corollary}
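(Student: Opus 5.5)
This is the special case of the bi-amalgamation $A\bowtie^{\iota,f}(I,J)$ with $I=f^{-1}(J)$ and $\iota=\mathrm{id}_A$, so I will deduce it from Theorem \ref{cohenmacthm}. The first step is to identify the data of that theorem for this presentation. Here $\ker\iota\cap\ker f=0$, so $\height(\ker \iota\cap\ker f)=0$ automatically. Also $I_0=\iota^{-1}(I)=f^{-1}(J)=I$. The pair of ideals is $(I,J)$, with $I\subseteq \mathfrak{m}=\jac(A)$ and $J\subseteq\jac(B)$.

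The cleanest route avoids the exceptional case of Theorem \ref{dimdepthcalclemma}(1) by working with the direct splitting. The sequence $0\to J\to A\bowtie^f J\to A\to 0$, given by $(a,f(a)+j)\mapsto a$, splits via $a\mapsto (a,f(a))$. Hence $A\bowtie^fJ\cong A\oplus J$ as $A$-modules, and this is finitely generated because $J$ is. As in the proof of Theorem \ref{dimdepthcalclemma}(1), $A\bowtie^fJ$ is module-finite over $A$, so its dimension equals $\dim A$. Its depth as a ring equals its depth as an $A$-module, by \cite[Exercise 1.2.26]{brunsherzog}; this uses that $A\bowtie^fJ$ is local by Proposition \ref{Prop.introd.}(3) and that its maximal ideal is $\sqrt{\mathfrak{m}A\bowtie^fJ}$ by integrality. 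Therefore
\[
\depth A\bowtie^fJ=\min(\depth A,\depth J),
\]
which is the earlier corollary on amalgamations.

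For the Cohen--Macaulay statement, I will use that $J\neq 0$ is finitely generated with $\mathfrak{m}J\neq J$ by Nakayama, so $\depth J\le\dim A$. Then $A\bowtie^fJ$ is Cohen--Macaulay exactly when $\min(\depth A,\depth J)=\dim A$. This forces $\depth A=\dim A$ and $\depth J=\dim A$, that is, $A$ is Cohen--Macaulay and $J$ is maximal Cohen--Macaulay. The converse is immediate from the same formula.

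The only delicate point is the degenerate case $J=0$. There $A\bowtie^fJ\cong A$, and the statement reads with the convention that the zero module is maximal Cohen--Macaulay (its depth is $\infty$). The other point to check is that the depth of $A\bowtie^fJ$ computed over $A$ agrees with its depth over itself. I expect this to be the main (mild) obstacle, and it is handled by the radical computation of Lemma \ref{radicallemma} and its corollary, since $J$ is finitely generated over $A$.
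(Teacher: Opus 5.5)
Your argument is correct, but it does not follow the paper's route. The paper gives no separate proof. It presents the corollary as a specialization of Theorem \ref{cohenmacthm} via $A\bowtie^fJ=A\bowtie^{\iota,f}(I,J)$ with $I=f^{-1}(J)$. For that presentation the relevant sequence is (\ref{exactseq}), with kernel $I\oplus J$ and cokernel $A/I$. By the splitting corollary (since $I\subseteq\mathfrak{m}$ and $J\subseteq\jac(B)$), this sequence splits only when $I=0$. So part (2) of the theorem is unavailable in general. Part (1), read literally, brings in conditions that are not in the statement: that $I$ be maximal Cohen--Macaulay, and a condition on $\depth A/I$ involving the exceptional case. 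Some extra work is therefore needed to land exactly on the corollary.

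You instead use a different sequence, $0\to 0\times J\to A\bowtie^fJ\to A\to 0$, which always splits via $a\mapsto(a,f(a))$. This gives $A\bowtie^fJ\cong A\oplus J$, the classical argument of \cite{DFF3} that the paper only mentions at the start of Section 2. From there, module-finiteness, independence of depth from the base ring, and $\depth(A\oplus J)=\min(\depth A,\depth J)$ finish the proof. There is no exceptional case and no role for $f^{-1}(J)$.

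The paper's route buys uniformity with the bi-amalgamation framework. Yours is shorter and self-contained. As a result, your first paragraph, which sets up the data of Theorem \ref{cohenmacthm}, is never used and can be dropped.

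Your treatment of $J=0$ is also right. Under the paper's definition, $\depth 0=\infty\neq\dim A$, so the statement as written needs $J\neq 0$ or your convention.
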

\begin{corollary}
    Let $(A,\mathfrak{m})$ be a local ring with $M$ a finitely generated $A$--module. Then $A\ltimes M$ is Cohen--Macaulay if and only if $A$ is Cohen--Macaulay and $M$ is a maximal Cohen--Macaulay $A$--module.
\end{corollary}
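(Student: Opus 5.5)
The statement to prove is the last corollary: for a local ring $(A,\mathfrak{m})$ and a finitely generated $A$--module $M$, the idealization $A\ltimes M$ is Cohen--Macaulay if and only if $A$ is Cohen--Macaulay and $M$ is a maximal Cohen--Macaulay $A$--module. I would deduce this from Theorem \ref{cohenmacthm}(1) by realizing $A\ltimes M$ as a bi--amalgamation and checking the hypotheses.

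\textbf{Realization.} Put $B=C=A\ltimes M$ and $f=g\colon A\to A\ltimes M$, $a\mapsto(a,0)$. Take $J=0\ltimes M$ and $J'=0$. Then
\[
f^{-1}(J)=0=g^{-1}(J'),
\]
so $I_0=0$. The bi--amalgamation $A\bowtie^{f,g}(J,0)$ is isomorphic to $f(A)+J=A\ltimes M$ via the first coordinate. Alternatively, one could use the amalgamation $A\bowtie^{\iota,f}(0,J)$ and the preceding corollary for $A\bowtie^f J$; either route works.

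\textbf{Hypotheses of Theorem \ref{cohenmacthm}(1).}
\begin{enumerate}[(i)]
\item Since $(0\ltimes M)^2=0$, the ideal $J$ is nilpotent and hence lies in $\jac(A\ltimes M)$. Trivially $J'=0\subseteq\jac(C)$.
\item As $A$--modules, $J\cong M$ and $J'=0$, and both are finitely generated.
\item Since $f$ is injective, $\ker f\cap\ker g=0$, which has height $0$.
\item Because $I_0=\ker f\cap\ker g=0$, the Corollary after Lemma \ref{radicallemma} applies and the sequence (\ref{exactseq}) splits. Hence we are in the usual case of Theorem \ref{dimdepthcalclemma}(1), and
\[
\depth A\ltimes M=\min(\depth A,\depth M), \qquad \depth A/I_0=\depth A .
\]
\end{enumerate}

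\textbf{Reading off the conclusion.} Theorem \ref{cohenmacthm}(1) then says that $A\ltimes M$ is Cohen--Macaulay if and only if $A$ is Cohen--Macaulay and $J\cong M$ and $J'=0$ are maximal Cohen--Macaulay. Here $\depth A/I_0=\depth A$ holds automatically.

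\textbf{The zero module.} The only delicate step is $J'=0$, which has infinite depth and so is not literally maximal Cohen--Macaulay. I would handle this by not invoking the theorem verbatim and instead rerunning its proof with $J\oplus J'=M$:
\[
\dim A\ltimes M=\dim A, \qquad \depth A\ltimes M=\min(\depth A,\depth M).
\]
If $A\ltimes M$ is Cohen--Macaulay, then $\min(\depth A,\depth M)=\dim A$. This forces $\depth A=\dim A$ and, when $M\neq 0$, $\depth M=\dim A$; if $M=0$ then $A\ltimes M=A$ and the claim is trivial. Conversely, if $A$ is Cohen--Macaulay and $M$ is maximal Cohen--Macaulay, the same formulas give $\depth A\ltimes M=\dim A=\dim A\ltimes M$, so $A\ltimes M$ is Cohen--Macaulay.
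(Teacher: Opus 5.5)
Your argument is correct and follows the route the paper intends, since the corollary is stated as a direct consequence of Theorem \ref{cohenmacthm}. You realize $A\ltimes M$ as a bi--amalgamation with $I_0=\ker f\cap\ker g=0$, so (\ref{exactseq}) splits, Theorem \ref{dimdepthcalclemma}(1) gives $\dim A\ltimes M=\dim A$ and $\depth A\ltimes M=\min(\depth A,\depth M)$, and the characterization follows as in Theorem \ref{cohenmacthm}(1). Rerunning the argument instead of citing the theorem verbatim is a genuine improvement, because the zero ideal in one slot has infinite depth and is not maximal Cohen--Macaulay in the paper's sense; the same zero ideal appears in the realization $A\bowtie^{\iota}(0\ltimes M)\cong A\bowtie^{\operatorname{id}_A,\iota}(0,0\ltimes M)$. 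Two small corrections: the splitting criterion you need is the corollary after the splitting lemma, not the one after Lemma \ref{radicallemma}, and the case $M=0$ is ``trivial'' only if the zero module is counted as maximal Cohen--Macaulay.
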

\begin{corollary}
     Let $(A,\mathfrak{m})$ be a local ring and $I$ an ideal of $A$. Then $A\bowtie I$ is Cohen--Macaulay if and only if $A$ is Cohen--Macaulay and $I$ is a maximal Cohen--Macaulay $A$--module.
\end{corollary}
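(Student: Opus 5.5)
The statement to prove is the last corollary: for a local ring $(A,\mathfrak{m})$ and an ideal $I$ of $A$, the duplication $A\bowtie I$ is Cohen--Macaulay if and only if $A$ is Cohen--Macaulay and $I$ is a maximal Cohen--Macaulay $A$--module. I would obtain it by specializing the bi--amalgamation machinery to $A\bowtie I\cong A\bowtie^{\iota,\iota}(I,I)$ with $\iota=\operatorname{id}_A$, $B=C=A$ and $J=J'=I$.

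\textbf{Hypotheses.} Here $\ker f\cap\ker g=0$, so its height is $0$ and $A/(\ker f\cap\ker g)=A$. Also $I_0=f^{-1}(J)=I$. Since $A$ is Noetherian, $I$ is finitely generated.

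There is a snag: Theorem \ref{cohenmacthm} asks that $J\subseteq\jac(B)$, i.e.\ $I\subseteq\mathfrak{m}$. If $I=A$, then $A\bowtie I=A\times A$, which is not local. So I restrict to the case $I\subseteq\mathfrak{m}$, which is the setting where the statement makes sense.

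\textbf{Which part of Theorem \ref{cohenmacthm} to use.} Part (1) of Theorem \ref{cohenmacthm} carries an awkward extra condition on $\depth A/I_0$, because it is built on the non-split sequence (\ref{exactseq}). I would avoid this by using the $A$--module description instead.

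The point is that for duplications the relevant sequence does split:
\[
A\bowtie I=\{(a,a+i)\}\cong A\oplus I \quad\text{via}\quad (a,a+i)\mapsto(a,i).
\]
This map is $A$--linear, where $A$ acts diagonally.

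\textbf{Key steps.}
\begin{enumerate}[(i)]
\item Note that $A\bowtie I$ is module-finite over $A$, so $\dim A\bowtie I=\dim A$. It is local with maximal ideal $\mathfrak{m}\bowtie I$ by Proposition \ref{Prop.introd.}(3).
\item Show $\mathfrak{m}\bowtie I=\sqrt{\mathfrak{m}(A\bowtie I)}$. This is the corollary following Lemma \ref{radicallemma}, since $I\subseteq\mathfrak{m}$ is finitely generated. By the independence theorem, local cohomology over $A\bowtie I$ at its maximal ideal then equals $H^i_{\mathfrak{m}}(A)\oplus H^i_{\mathfrak{m}}(I)$.
\item Conclude, exactly as in the proof of Theorem \ref{dimdepthcalclemma}(2), that
\[
\depth A\bowtie I=\min(\depth A,\depth I).
\]
This is the preceding depth corollary for duplications.
\end{enumerate}

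\textbf{Deducing the equivalence.} Suppose $A\bowtie I$ is Cohen--Macaulay. Then
\[
\dim A=\min(\depth A,\depth I)\le\depth A\le\dim A.
\]
So $A$ is Cohen--Macaulay. If $I\neq0$, then $\depth I\le\dim A$ is finite because $\mathfrak{m}I\ne I$ by Nakayama. The same chain forces $\depth I=\dim A$, so $I$ is maximal Cohen--Macaulay. If $I=0$, the claim holds under the convention $\depth 0=\infty$.

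Conversely, if $A$ is Cohen--Macaulay and $I$ is maximal Cohen--Macaulay, then $\depth A\bowtie I=\dim A=\dim A\bowtie I$, so $A\bowtie I$ is Cohen--Macaulay.

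\textbf{Main obstacle.} The only delicate points are the edge cases $I=A$ and $I=0$ just discussed. Everything else is a direct specialization of results already proved.
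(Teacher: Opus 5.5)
Your proof is correct, but it takes a different route from the paper's. The paper obtains this corollary simply by specializing Theorem \ref{cohenmacthm}.

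For $A\bowtie^{\iota,\iota}(I,I)$ one has $I_0=I$ but $\ker f\cap\ker g=0$. So part (2) of that theorem does not apply unless $I=0$. Part (1) gives the criterion only together with a side condition: $\depth A/I=\depth A$ in the usual case, or $\depth A/I=\depth A-1$ in the exceptional case. This condition comes from the non-split sequence $0\to I\oplus I\to A\bowtie I\to A/I\to 0$. The paper does not spell out why the side condition is automatic here. For a regular ideal, such as a canonical ideal, one is in fact in the exceptional case.

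You instead use the other natural sequence $0\to 0\times I\to A\bowtie I\to A\to 0$, which is split $A$-linearly by the diagonal $a\mapsto(a,a)$. You then run the local cohomology argument from the proof of Theorem \ref{dimdepthcalclemma}(2) on it: the radical lemma plus the independence theorem. This gives $\depth A\bowtie I=\min(\depth A,\depth I)$ with no case distinction, and the criterion follows at once. It is essentially D'Anna's original argument via $A\bowtie I\cong A\oplus I$, which the paper recalls at the start of Section 2.

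What each route buys:
\begin{itemize}
\item The paper's route is uniform over all bi-amalgamations, where no such splitting exists in general.
\item Yours relies on the ring section $a\mapsto(a,f(a))$, which exists only for amalgamations (hence duplications and idealizations). In exchange, it proves exactly the stated criterion, and along the way gives a genuine derivation of the depth formula for duplications that you quote.
\end{itemize}

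Two minor points on your edge cases.
\begin{itemize}
\item For $I=0$, the convention $\depth 0=\infty$ does not by itself make $0$ maximal Cohen--Macaulay under the paper's definition ($\depth M=\dim A$). You need the convention that the zero module counts as maximal Cohen--Macaulay, e.g.\ by defining it via $\depth M\ge\dim A$. Otherwise the statement literally fails for $I=0$ whenever $A$ is Cohen--Macaulay.
\item The case $I=A$ need not be excluded. Here $A\bowtie A=A\times A$ is Cohen--Macaulay if and only if $A$ is, and $A$ is maximal Cohen--Macaulay over itself if and only if $A$ is Cohen--Macaulay. So the statement holds trivially there too.
\end{itemize}
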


\section{Gorenstein Property of $A\bowtie^{f,g}(J,J')$}

The aim of this section is to give some necessary and sufficient conditions for the Gorenstein property of the bi--amalgamated algebra $A\bowtie^{f,g}(J,J')$. The Gorenstein property for amalgamated duplication was characterized by D'Anna in \cite{Gorenstein}. Under the condition that $A$ is a local Cohen--Macaulay ring with a proper ideal $I$, $A\bowtie I$ is Gorenstein if and only if $A$ has a canonical ideal which is isomorphic to $I$. Shapiro \cite{Shapiro} pointed out and corrected an implicit assumption of $\Ann_A(I)=0$ in the original proof, which, for Noetherian rings, is equivalent to $I$ being a regular ideal. This investigation was further generalized to the quasi--Gorenstein property in \cite{quasigorenstein}. The Gorenstein property for the amalgamated algebra was characterized using the pull--back construction in \cite{DFF3}. A study on quasi--Gorensteinness of amalgamated algebras can be found in \cite{iranlilar}, where results of \cite{DFF3} were further improved. Here we use an approach similar to that of \cite{DFF3} and use the pull--back description of $A\bowtie^{f,g}(J,J')$ to characterize the Gorenstein property.\medskip

We now outline the definition of a canonical module and a Gorenstein ring, following the classical terminology in \cite{herzogkunz,brunsherzog}. We call an $A$--module $\omega$ a canonical module of the local ring $(A,\mathfrak{m})$ if
\begin{align*}
    \omega\otimes _A \widehat{A}\cong\Hom_A\left( H_{\mathfrak{m}}^{\dim A}(A),E_A(A/\mathfrak{m}) \right),
\end{align*}
where as usual $E_A$ means the injective hull over $A$. If such a canonical module exists, then it is unique up to isomorphism and is finitely generated as an $A$--module with dimension $\dim A$. $A$ is called quasi--Gorenstein if $H_{\mathfrak{m}}^{\dim A}(A)\cong E_A(A/\mathfrak{m})$. This is equivalent to the existence of a canonical module $\omega_A$ of $A$ that is a free $A$--module of rank one \cite{somebasic}. A quasi--Gorenstein and Cohen--Macaulay ring is called Gorenstein. A Noetherian local ring $A$ admits a canonical module if and only if it is a homomorphic image of a Gorenstein ring. In fact, if $A$ is a homomorphic image of a local Gorenstein ring $B$, then one may construct $\omega_A$ as
\begin{align*}
    \omega_A\cong\ext_B^{\dim B-\dim A}(A,B).
\end{align*}

In order to start our investigation, we first need the following lemma.

\begin{lemma}\label{AnnLemma}
    If $\Ann_{g(A)+J'}(J')=0$, then
\begin{align*}
    \Hom_{A\bowtie^{f,g}(J,J')}\left(f(A)+J,A\bowtie^{f,g}(J,J')\right)\cong J\times 0
\end{align*}
holds as $f(A)+J$--modules.    
\end{lemma}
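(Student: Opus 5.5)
Write $R=A\bowtie^{f,g}(J,J')$. By Proposition \ref{Prop.introd.}(1), $f(A)+J\cong R/(0\times J')$, so
\[
\Hom_R(f(A)+J,R)\cong \Hom_R\bigl(R/(0\times J'),R\bigr)\cong \Ann_R(0\times J').
\]
The isomorphism sends $\varphi$ to $\varphi(\bar 1)$. It is $R$-linear, and since both sides are killed by $0\times J'$, it is also $R/(0\times J')$-linear, that is, $f(A)+J$-linear. So the whole task reduces to computing $\Ann_R(0\times J')$ and showing that it equals $J\times 0$ as a subset of $R$.

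First, $J\times 0\subseteq R$, since $(j,0)=(f(0)+j,g(0)+0)$. It annihilates $0\times J'$, because $(j,0)(0,j')=(0,0)$. Hence $J\times 0\subseteq\Ann_R(0\times J')$.

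For the reverse inclusion, take $(f(a)+j,\,g(a)+j')\in R$ annihilating every $(0,k')$ with $k'\in J'$. Then $(g(a)+j')k'=0$ for all $k'\in J'$. Since $g(a)+j'\in g(A)+J'$ and $\Ann_{g(A)+J'}(J')=0$, we get $g(a)+j'=0$. It remains to see that $f(a)+j\in J$.

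This is the only subtle step. From $g(a)=-j'\in J'$ we get $a\in g^{-1}(J')=f^{-1}(J)$ by the defining compatibility condition, so $f(a)\in J$ and hence $f(a)+j\in J$. The element is therefore $(f(a)+j,0)\in J\times 0$, which gives equality.

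Finally, the $f(A)+J$-module structure on $J\times 0$ is the natural one: $(f(a)+j)$ acts on $(x,0)$ by multiplication in the first coordinate. This is compatible with the action of $R$ through the projection $R\to f(A)+J$ from Proposition \ref{Prop.introd.}(1). So the resulting isomorphism is one of $f(A)+J$-modules, and $J\times 0\cong J$.
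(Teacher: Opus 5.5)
Your proposal is correct and follows the same route as the paper. You identify $f(A)+J$ with $A\bowtie^{f,g}(J,J')/(0\times J')$, reduce the $\Hom$ to $\Ann_{A\bowtie^{f,g}(J,J')}(0\times J')$, use $\Ann_{g(A)+J'}(J')=0$ to force the second coordinate to vanish, and then use $g^{-1}(J')=f^{-1}(J)$ to place the first coordinate in $J$; you also spell out the inclusion $J\times 0\subseteq\Ann_{A\bowtie^{f,g}(J,J')}(0\times J')$ and the $f(A)+J$-linearity of the identification, which the paper leaves implicit.
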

\begin{proof}
    By Proposition \ref{Prop.introd.} \ref{isomlemma}, we have the following
    \begin{align*}
        \Hom_{A\bowtie^{f,g}(J,J')}\left(f(A)+J,A\bowtie^{f,g}(J,J')\right)&\cong \Hom_{A\bowtie^{f,g}(J,J')}\left(A\bowtie^{f,g}(J,J')/0\times J',A\bowtie^{f,g}(J,J')\right)\\
&\cong \Ann_{A\bowtie^{f,g}(J,J')}(0\times J')\\
&=\{(f(a)+j,g(a)+j'):g(a)+j'\in \Ann_{g(A)+J'}(J') \}\\&=\{(f(a)+j,0):g(a)\in J'\}.
    \end{align*}
    Furthermore, since $g(a)\in J'$, we get $a\in g^{-1}(J')=I_0=f^{-1}(J)$. This implies that $f(a) \in J$, and consequently the first coordinate $f(a)+j$ entirely belongs to $J$. Thus, the module is exactly $J\times 0$.
\end{proof}
Lemma \ref{AnnLemma} reduces to the classical constructions of \cite{Gorenstein,DFF3,iranlilar} in special cases. To avoid confusion with the future injectivity assumption on $f$, we use the homomorphism $g$ and the ideal $J'$ for the amalgamated algebra in this section.
\begin{corollary}
        If $\Ann_{g(A)+J'}(J')=0$, then
\begin{align*}
    \Hom_{A\bowtie^{g}J'}\left(A,A\bowtie^{g}J'\right)\cong g^{-1}(J')\times 0
\end{align*}
holds as $A$--modules.    
\end{corollary}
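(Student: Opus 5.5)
This corollary follows by specializing Lemma \ref{AnnLemma} to the amalgamated algebra. As recalled in the introduction, $A\bowtie^{g}J'\cong A\bowtie^{\iota,g}(I,J')$ with $\iota=\operatorname{id}_A$ and $I=g^{-1}(J')$. The compatibility condition $\iota^{-1}(I)=g^{-1}(J')$ holds by construction, so $I_0=I=g^{-1}(J')$. In this bi--amalgamation the first factor ring is $\iota(A)+I=A+I=A$. The hypothesis $\Ann_{g(A)+J'}(J')=0$ is exactly the hypothesis of Lemma \ref{AnnLemma} for this choice of data. The lemma then gives
\begin{align*}
\Hom_{A\bowtie^{\iota,g}(I,J')}\left(A,A\bowtie^{\iota,g}(I,J')\right)\cong I\times 0=g^{-1}(J')\times 0
\end{align*}
as $\iota(A)+I=A$--modules.

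It remains to transport this statement along the identification $A\bowtie^{g}J'\cong A\bowtie^{\iota,g}(I,J')$. We need to check that the identification is compatible with the relevant module structures. The identification sends $(a,g(a)+j')$ to itself, since $\iota(a)+i$ with $i\in I$ ranges over all of $A$. Under it, the surjection onto $A$ from Proposition \ref{Prop.introd.}(1), namely the quotient by $0\times J'$, becomes the first projection $(a,g(a)+j')\mapsto a$. This first projection is precisely the map that makes $A$ a module over $A\bowtie^{g}J'$. Consequently the $\Hom$ modules agree, and the $A$--module structure on both sides comes from the same projection.

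As a sanity check, I would recompute the corollary directly through the lemma's mechanism. The module $\Hom(A,A\bowtie^g J')$ is identified with $\Ann(0\times J')$, which consists of the elements $(a,g(a)+j')$ with $g(a)+j'\in\Ann_{g(A)+J'}(J')=0$. This forces $g(a)=-j'\in J'$, so $a\in g^{-1}(J')$ and the element is $(a,0)$. Conversely, for any $a\in g^{-1}(J')$, the element $(a,0)$ lies in the ring (take $j'=-g(a)$) and annihilates $0\times J'$. The annihilator is therefore exactly $g^{-1}(J')\times 0$.

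The only point I expect to need care is the bookkeeping for the identification of $A\bowtie^{g}J'$ with $A\bowtie^{\iota,g}(I,J')$. In particular, the role of $J$ in Lemma \ref{AnnLemma} is played here by $I=g^{-1}(J')\subseteq A$, which is why the answer is $g^{-1}(J')\times 0$ and not $J'$ in some guise. The rest is immediate.
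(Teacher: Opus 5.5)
Your proposal is correct and follows the paper's route: the corollary is Lemma \ref{AnnLemma} specialized to $f=\iota=\mathrm{id}_A$ and $J=g^{-1}(J')$. Then $f(A)+J=A$ and $J\times 0=g^{-1}(J')\times 0$, and your direct computation of $\Ann(0\times J')$ simply replays the lemma's proof in this case. One small point: the identification $A\bowtie^{\iota,g}(g^{-1}(J'),J')=A\bowtie^{g}J'$ is actually an equality of subsets of $A\times C$. The reason is that $g(i)\in J'$ for $i\in g^{-1}(J')$ and can be absorbed into the $J'$-component, which is slightly more than your remark that $\iota(a)+i$ ranges over $A$.
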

\begin{corollary}
        If $\Ann_{A}(M)=0$, then
\begin{align*}
    \Hom_{A\ltimes M}(A,A\ltimes M)\cong M\times 0
\end{align*}
holds as $A$--modules.    
\end{corollary}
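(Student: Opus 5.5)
The plan is to argue directly, mirroring the proof of Lemma \ref{AnnLemma}, rather than specializing that lemma. The specialization looks natural: write $A\ltimes M\cong A\bowtie^{g}J'$ with $B=A\ltimes M$, $g\colon a\mapsto(a,0)$ and $J'=0\ltimes M$. But then $g(A)+J'=A\ltimes M$, and the hypothesis $\Ann_{g(A)+J'}(J')=0$ of the lemma fails, because $J'^2=0$ forces $J'\subseteq\Ann(J')$. So the lemma cannot be quoted verbatim, and I will instead run the same annihilator computation inside $R:=A\ltimes M$.

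First I will record the analogue of Proposition \ref{Prop.introd.}(1): the projection $R\to A$, $(a,m)\mapsto a$, is a surjective ring homomorphism with kernel $0\ltimes M$, so $A\cong R/(0\ltimes M)$ as $R$--modules. The $A$--module structure on $R$ comes from the section $a\mapsto(a,0)$. Then, exactly as in Lemma \ref{AnnLemma},
\[
\Hom_{R}(A,R)\cong \Hom_R\bigl(R/(0\ltimes M),R\bigr)\cong \Ann_R(0\ltimes M),
\]
where a homomorphism $\varphi$ is sent to $\varphi(1)$. This identification is compatible with the $A$--action because both structures come from $R$.

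Next I will compute the annihilator. Since $(a,m)(0,n)=(0,an)$, we have
\[
\Ann_R(0\ltimes M)=\{(a,m): aM=0\}=\Ann_A(M)\ltimes M .
\]
The hypothesis $\Ann_A(M)=0$ then gives $\Ann_R(0\ltimes M)=0\ltimes M$. Finally, $0\ltimes M\cong M$ as $A$--modules via $(0,m)\mapsto m$, since $(a,0)(0,m)=(0,am)$. This is the module written $M\times 0$ in the statement, with $M$ sitting in the ideal coordinate.

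The only delicate point is bookkeeping rather than mathematics. I need to check that the module structures match: the $A$--structure on $\Hom$ induced from $R$ should agree with the one on $\Ann_R(0\ltimes M)$. I also need to make sure that the notation $M\times 0$ is read as $0\ltimes M$, the square-zero ideal, and not as a copy of $A$.
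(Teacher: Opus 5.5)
Your proof is correct. The paper gives no separate argument for this corollary. It lists it as a special case of Lemma \ref{AnnLemma}, whose proof uses exactly your two steps: $\Hom_R(R/I,R)\cong\Ann_R(I)$, followed by an explicit annihilator computation.

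The difference is that you run the computation directly in $R=A\ltimes M$ instead of specializing the lemma, and your reason for doing so is sound. Under $A\ltimes M\cong A\bowtie^{g}(0\ltimes M)$ one gets $\Ann_{g(A)+J'}(J')=\Ann_A(M)\ltimes M\supseteq J'\neq 0$. Here $J'\neq 0$ because $\Ann_A(M)=0$ forces $M\neq 0$ when $A\neq 0$. Moreover, the lemma's formula would return $g^{-1}(J')\times 0=0$ in this presentation.

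In fact no bi-amalgamation presentation can make the lemma applicable. The lemma's hypothesis forces $(0\times J')\cap\Ann(0\times J')=0$. But $0\times J'$ would have to correspond to $0\ltimes M$, since both are the annihilator of the module $A$, and $0\ltimes M$ annihilates itself.

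So your direct identity $\Ann_{A\ltimes M}(0\ltimes M)=\Ann_A(M)\ltimes M$ is what actually justifies the statement. It also yields the unconditional formula $\Hom_{A\ltimes M}(A,A\ltimes M)\cong\Ann_A(M)\ltimes M$. The only cost is reading $M\times 0$ as the square-zero ideal $0\ltimes M\cong M$, as you already do.
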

\begin{corollary}
        If $\Ann_{A}(I)=0$, then
\begin{align*}
    \Hom_{A\bowtie I}(A,A\bowtie I)\cong I\times 0
\end{align*}
holds as $A$--modules.    
\end{corollary}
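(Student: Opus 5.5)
The statement is the last corollary: if $\Ann_A(I)=0$, then $\Hom_{A\bowtie I}(A,A\bowtie I)\cong I\times 0$ as $A$--modules. We derive it from Lemma \ref{AnnLemma} by specializing the bi--amalgamation, following the dictionary in the introduction: $A\bowtie I\cong A\bowtie^{\iota,\iota}(I,I)$ with $B=C=A$, $f=g=\iota=\text{id}_A$ and $J=J'=I$. The compatibility condition $f^{-1}(J)=g^{-1}(J')$ holds trivially, and $I_0=I$.

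In this setting $g(A)+J'=A+I=A$, so the hypothesis $\Ann_{g(A)+J'}(J')=0$ of Lemma \ref{AnnLemma} is exactly $\Ann_A(I)=0$. Also $f(A)+J=A$. It is viewed as an $A\bowtie I$--module through the projection onto the first coordinate, namely the isomorphism $A\bowtie I/(0\times I)\cong A$ of Proposition \ref{Prop.introd.}(1). The lemma therefore gives
\[
\Hom_{A\bowtie I}(A,A\bowtie I)\cong I\times 0
\]
as $f(A)+J=A$--modules, which is the claim.

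The only point needing care is that the $A$--module structure on $A$ used in the corollary matches the one in the lemma, i.e.\ that $A$ is regarded as an $A\bowtie I$--module via $(a,a+i)\mapsto a$. Since the duplication is symmetric, using the other projection would only swap $I\times 0$ and $0\times I$. As a sanity check, one can run the lemma's computation directly. We have $\Hom(A\bowtie I/(0\times I),A\bowtie I)\cong\Ann_{A\bowtie I}(0\times I)$, and this consists of the pairs $(a,a+i)$ with $a+i\in\Ann_A(I)=0$. That forces $a=-i\in I$, so the annihilator is $\{(a,0):a\in I\}=I\times 0$. I expect no real obstacle beyond this bookkeeping.
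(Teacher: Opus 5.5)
Your proposal is correct and matches the paper. There the corollary is stated without a separate proof, as the direct specialization of Lemma \ref{AnnLemma} to $f=g=\mathrm{id}_A$, $J=J'=I$, where $f(A)+J=g(A)+J'=A$ and the lemma's hypothesis becomes $\Ann_A(I)=0$. Your direct verification that $\Ann_{A\bowtie I}(0\times I)=I\times 0$ is simply the lemma's computation carried out in this special case.
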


We now establish a necessary condition for $A\bowtie^{f,g}(J,J')$ to be Gorenstein, which is that $J$ is a canonical module of $f(A)+J$.

\begin{proposition}\label{gorprop}
    Assume that $(A,\mathfrak{m})$ is a local Cohen--Macaulay ring, $f$ is an injective ring homomorphism and $J$ and $J'$ are finitely generated $A$--modules with $\Ann_{g(A)+J'}(J')=0$, $J\subseteq \jac(B)$ and $J'\subseteq \jac(C)$. If $A\bowtie^{f,g}(J,J')$ is Gorenstein, then $f(A)+J$ has a canonical module isomorphic to $J$.
\end{proposition}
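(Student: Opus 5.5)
The idea is to realize $f(A)+J$ as a quotient of the Gorenstein ring $R:=A\bowtie^{f,g}(J,J')$ of the same dimension. Then the standard formula $\omega_{R/\mathfrak a}\cong \Hom_R(R/\mathfrak a,R)$, valid when $R$ is Gorenstein and $\dim R/\mathfrak a=\dim R$ (the case $\ext^0$ of $\omega_A\cong\ext_B^{\dim B-\dim A}(A,B)$ recalled above), identifies the canonical module. Lemma \ref{AnnLemma} then computes this $\Hom$ as $J\times 0\cong J$.

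First, note that $R$ is local. Since $J\subseteq\jac(B)$ and $J'\subseteq\jac(C)$, Proposition \ref{Prop.introd.}(3) applies, and $R$ is Noetherian because it is module-finite over $A$ by the argument in Theorem \ref{dimdepthcalclemma}(1). By Proposition \ref{Prop.introd.}(1), we have $f(A)+J\cong R/(0\times J')$.

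Next, compare dimensions. Since $f$ is injective, $\ker f\cap\ker g=0$, so Theorem \ref{dimdepthcalclemma}(1) gives $\dim R=\dim A$. On the other side, $f(A)+J$ contains $f(A)\cong A$ and is a finitely generated $A$-module, because $J$ is. Hence $f(A)+J$ is integral over a copy of $A$, and so $\dim(f(A)+J)=\dim A=\dim R$. Thus $\height_R(0\times J')=0$ in the sense needed, and the formula gives
\[
\omega_{f(A)+J}\cong \ext_R^{0}(f(A)+J,R)=\Hom_R(f(A)+J,R).
\]
For this to be a genuine canonical module we need $f(A)+J$ to be a homomorphic image of the Gorenstein local ring $R$ (true) and the codimension to be $\dim R-\dim(f(A)+J)=0$ (just shown).

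Finally, the hypothesis $\Ann_{g(A)+J'}(J')=0$ lets us invoke Lemma \ref{AnnLemma}, which gives
\[
\Hom_R(f(A)+J,R)\cong J\times 0
\]
as $f(A)+J$-modules, and $J\times 0\cong J$ as $f(A)+J$-modules. This proves that $f(A)+J$ has a canonical module isomorphic to $J$.

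The main point to check is the dimension equality and the applicability of the $\ext^{\dim B-\dim A}$ formula. That formula is usually stated for a surjection from a Gorenstein local ring, and it needs $\dim(f(A)+J)=\dim R$; this is exactly where injectivity of $f$ and finite generation of $J$ enter. One should also verify that the $\Hom$ computed over $R$ agrees as an $f(A)+J$-module with the canonical module, which is automatic since the $R$-action factors through $R/(0\times J')$. The Cohen--Macaulay hypothesis on $A$ is not strictly needed for this direction, beyond ensuring consistency with $R$ being Cohen--Macaulay.
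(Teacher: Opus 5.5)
Your proposal is correct and follows essentially the same route as the paper. Both arguments use injectivity of $f$ and finite generation of $J$ to get $\dim A\bowtie^{f,g}(J,J')=\dim A=\dim(f(A)+J)$. This identifies the canonical module of $f(A)+J\cong A\bowtie^{f,g}(J,J')/(0\times J')$ with $\Hom_{A\bowtie^{f,g}(J,J')}(f(A)+J,A\bowtie^{f,g}(J,J'))$, which Lemma \ref{AnnLemma} shows is $J\times 0\cong J$.

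Your justification of the key step through the formula $\omega\cong\ext^{\dim B-\dim A}_B(A,B)$ for homomorphic images of Gorenstein rings, under the paper's general definition of canonical module, is clean. It does not require knowing in advance that $f(A)+J$ is Cohen--Macaulay.
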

\begin{proof}
    The Gorensteinness of $A\bowtie^{f,g}(J,J')$ implies the existence of a canonical $A\bowtie^{f,g}(J,J')$--module $\omega_{\bowtie}$ with the $A\bowtie^{f,g}(J,J')$--module isomorphism $\omega_{\bowtie}\cong A\bowtie^{f,g}(J,J')$. Injectiveness of $f$ and Theorem $\ref{dimdepthcalclemma}$ gives us $\dim A\bowtie^{f,g}(J,J')=\dim A$. Since $f$ is injective, $f(A)+J$ is an integral extension of $A$ and they have the same dimension. The classical construction of canonical modules  \cite[Theorem 21.15]{eisenbud} shows that there is a canonical module $\omega_f$ of $f(A)+J$ satisfying
    \begin{align*}
        \omega_f&\cong\ext_{A\bowtie^{f,g}(J,J')}^{\dim A\bowtie^{f,g}(J,J')-\dim f(A)+J }(f(A)+J,\omega_{\bowtie})\\
        &\cong\ext_{A\bowtie^{f,g}(J,J')}^0\left(f(A)+J,A\bowtie^{f,g}(J,J')\right)\\
&\cong \Hom_{A\bowtie^{f,g}(J,J')}\left(f(A)+J,A\bowtie^{f,g}(J,J')\right)\\
&\cong J\times0
    \end{align*}
    as an $f(A)+J$--module, where we have utilized Lemma \ref{AnnLemma}.
\end{proof}
In particular, this reduces to the result of \cite{DFF3} in the case of amalgamations.
\begin{corollary}
    Assume that $(A,\mathfrak{m})$ is a local Cohen--Macaulay ring and $J'$ is a finitely generated $A$--module with $\Ann_{g(A)+J'}(J')=0$ and $J'\subseteq \jac(C)$. If $A\bowtie^{g}J'$ is Gorenstein, then $A$ has a canonical module isomorphic to $g^{-1}(J')$.
\end{corollary}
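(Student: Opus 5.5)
The plan is to specialize Proposition \ref{gorprop} to the amalgamated algebra, viewed as a bi--amalgamation. As recalled in the introduction, with $I=g^{-1}(J')$ and $\iota=\mathrm{id}_A$ we have
\[
A\bowtie^{g}J'\cong A\bowtie^{\iota,g}(I,J').
\]
Here the ``$f$'' of Proposition \ref{gorprop} is $\iota$, with $B=A$ and $J=I$. The compatibility condition $\iota^{-1}(I)=g^{-1}(J')$ holds by the choice of $I$. Moreover, $\iota(A)+I=A$, so the ring whose canonical module Proposition \ref{gorprop} produces is $A$ itself.

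The next step is to verify the hypotheses of Proposition \ref{gorprop} one by one.
\begin{itemize}
\item $(A,\mathfrak{m})$ is local and Cohen--Macaulay by assumption.
\item $\iota=\mathrm{id}_A$ is injective.
\item $J'$ is finitely generated over $A$ with $\Ann_{g(A)+J'}(J')=0$ and $J'\subseteq\jac(C)$, all by assumption.
\item $I$ is an ideal of the Noetherian ring $A$, hence a finitely generated $A$--module.
\item It remains to check $I\subseteq\jac(A)=\mathfrak{m}$.
\end{itemize}

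The last point is the only one that is not immediate. If $I=A$, then $1\in g^{-1}(J')$, so $1=g(1)\in J'\subseteq\jac(C)$. This forces $C=0$, and then $J'=0$ and the annihilator hypothesis fails trivially (or the situation is degenerate). Hence $I$ is a proper ideal of the local ring $A$, and so $I\subseteq\mathfrak{m}$.

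With the hypotheses checked, Proposition \ref{gorprop} applies. Its conclusion is that if $A\bowtie^{\iota,g}(I,J')\cong A\bowtie^{g}J'$ is Gorenstein, then $\iota(A)+I=A$ has a canonical module isomorphic to the ``$J$'' of the bi--amalgamation, namely $I=g^{-1}(J')$. Through Lemma \ref{AnnLemma}, this is the module $I\times 0$ regarded as an $A$--module, which is isomorphic to $g^{-1}(J')$. This agrees with the preceding corollary to Lemma \ref{AnnLemma}.

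The main point of care is bookkeeping of the isomorphism $A\bowtie^{g}J'\cong A\bowtie^{\iota,g}(I,J')$. One needs that it is a ring isomorphism under which the relevant module structures correspond, so that Gorensteinness transfers and the module structure on $I$ is the natural one. This is routine from the explicit description $(a,g(a)+j')\mapsto(a,g(a)+j')$, since every element $a+i$ with $i\in I$ is just an element of $A$.
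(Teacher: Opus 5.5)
Your proposal is correct and follows the paper's route: the paper obtains this corollary by specializing Proposition \ref{gorprop} through $A\bowtie^{g}J'=A\bowtie^{\iota,g}(g^{-1}(J'),J')$, and your check that $g^{-1}(J')\subseteq\mathfrak{m}$ makes explicit a hypothesis verification the paper leaves unstated. One small slip in that check: if $C=0$, the annihilator hypothesis does not fail (in the zero ring $\Ann(0)=0$ holds); but then $A\bowtie^{g}J'=A\times 0\cong A$, so Gorensteinness gives $\omega_A\cong A=g^{-1}(J')$ directly, and the conclusion still holds.
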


We now characterize the Gorenstein property of $A\bowtie^{f,g}(J,J')$ with some additional assumptions. Recall that the Serre condition $(S_1)$ for a Noetherian ring $A$ is the assumption that $\depth A_{\mathfrak{p}}\ge \min(1,\height \mathfrak{p})$ for all prime ideals $\mathfrak{p}$.

\begin{theorem}\label{gorensteintheorem}
    Assume that $(A,\mathfrak{m})$ is a local ring with the proper ideal $I_0$ and $J$ and $J'$ are finitely generated $A$--modules with $\Ann_{g(A)+J'}(J')=0\neq J'$, $J\subseteq \jac(B)$ and $J'\subseteq \jac(C)$. Let $f(A)+J$ be Cohen--Macaulay and $g(A)+J'$ be $(S_1)$ and equidimensional. Then, the following are equivalent.
    \begin{enumerate}
        \item $A\bowtie^{f,g}(J,J')$ is Gorenstein,
        \item $g(A)+J'$ is Cohen--Macaulay. Furthermore, $f(A)+J$ and $g(A)+J'$ have canonical modules isomorphic to $J$ and $J'$ respectively.
    \end{enumerate}
\end{theorem}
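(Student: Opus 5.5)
The plan is to use the pull--back description of $R:=A\bowtie^{f,g}(J,J')$. It is the fiber product of $f(A)+J$ and $g(A)+J'$ over $A/I_0$, via the surjections $f(A)+J\to A/I_0$ and $g(A)+J'\to A/I_0$ induced by Proposition \ref{Prop.introd.}(1). The two projections give surjections of $R$ onto each factor, with kernels $0\times J'$ and $J\times 0$. By Proposition \ref{Prop.introd.}(3), $R$ is local with maximal ideal $\mathfrak{m}\bowtie^{f,g}(J,J')$. Moreover, $R$ is a finite module over the image of $A$, by the integrality argument in the proof of Theorem \ref{dimdepthcalclemma}(1). Hence $\dim R=\dim A/(\ker f\cap\ker g)=\max(\dim(f(A)+J),\dim(g(A)+J'))$, because $\spec R$ is the union of the two closed subsets coming from the factors (Proposition \ref{Prop.introd.}(2)).

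\textbf{(1)$\Rightarrow$(2).} First, $R$ Gorenstein implies $R$ Cohen--Macaulay, hence equidimensional. Since $J'\neq 0$ and $\Ann_{g(A)+J'}(J')=0$, the ideal $J\times 0$ is not contained in any minimal prime coming only from $g(A)+J'$, and conversely. I would use this to show $\dim(f(A)+J)=\dim(g(A)+J')=\dim R$. The argument: the minimal primes of $R$ are among the $\overline{L}$ for $L$ minimal in either factor, and $R$ equidimensional forces each such component to have full dimension.

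The Cohen--Macaulayness of $g(A)+J'$ should come from the exact sequence $0\to J\times 0\to R\to g(A)+J'\to 0$ together with depth counting as in Theorem \ref{dimdepthcalclemma}. Alternatively, it comes from the dual argument below: $g(A)+J'$ is a quotient of the Gorenstein ring $R$ of the same dimension, so $\omega_{g(A)+J'}\cong\Hom_R(g(A)+J',R)$. The symmetric version of Lemma \ref{AnnLemma}, which here needs only $\Ann_{f(A)+J}(J)=0$, identifies this with $J'$.

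The canonical module of $f(A)+J$ is obtained exactly as in Proposition \ref{gorprop}: $\omega_{f(A)+J}\cong\Hom_R(f(A)+J,R)\cong J\times 0$, by Lemma \ref{AnnLemma} and the equality of dimensions.

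To get $\Ann_{f(A)+J}(J)=0$, I would use that $f(A)+J$ is Cohen--Macaulay. Then $\omega=J$ is faithful, since canonical modules of Cohen--Macaulay rings are faithful. This gives the symmetric hypothesis, and a Cohen--Macaulay property for $g(A)+J'$ then follows from $J'\cong\omega$ being maximal Cohen--Macaulay together with the $(S_1)$ hypothesis.

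\textbf{(2)$\Rightarrow$(1).} Both factors are now Cohen--Macaulay of the same dimension $d$, with canonical ideals $J$ and $J'$, and $J,J'$ are maximal Cohen--Macaulay. Consider $0\to J\oplus J'\to R\to A/I_0\to 0$ and the sequences $0\to J\to f(A)+J\to A/I_0\to 0$ and $0\to J'\to g(A)+J'\to A/I_0\to 0$. From these, $A/I_0$ has depth at least $d-1$. Counting depth with Theorem \ref{dimdepthcalclemma}(1), or with the Mayer--Vietoris type sequence $0\to R\to (f(A)+J)\oplus(g(A)+J')\to A/I_0\to 0$, gives $\depth R\ge d$. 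So $R$ is Cohen--Macaulay.

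For the Gorenstein property, I would compute the type: show $\Hom_R(R/\mathfrak{m}_R,R)$ is one-dimensional. Equivalently, show that $R$ has a canonical module isomorphic to $R$. The approach is to check that $\omega_R$ restricts to $J$ and $J'$ on the two components, and to glue these canonical ideals over $A/I_0$. The relevant fact is that $J/(J\cap\text{stuff})$ and $J'$ induce the same identification $\omega_{A/I_0}$-type data, which is the standard gluing fact for canonical modules of fiber products (cf.\ \cite{DFF3}).

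I expect this gluing step, i.e.\ proving $\omega_R\cong R$ from the isomorphisms $\omega_{f(A)+J}\cong J$ and $\omega_{g(A)+J'}\cong J'$, to be the main obstacle. I would first try to reduce it to a socle computation. Take $x\in R$ regular (available by the $(S_1)$ and equidimensional hypotheses together with $\Ann(J')=0$) so that $x$ is regular on both factors and on $A/I_0$. Pass to $R/xR$, and induct down to dimension $0$. In dimension $0$, Gorensteinness means the socle is simple. Because $J\cong\omega$ and $J'\cong\omega$ are faithful with simple socles, the socle of $R$ is $\mathrm{soc}(J)\times 0+0\times\mathrm{soc}(J')$ modulo the identification forced by the fiber product. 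I would then check that this sum is one-dimensional.
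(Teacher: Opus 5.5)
\textbf{Verdict: there is a genuine gap.} Your starting point, the pull-back $R=P\times_T S$ with $P=f(A)+J$, $S=g(A)+J'$, $T=A/I_0$, is where the paper starts. The paper then simply invokes the Gorenstein fiber-product theorem \cite[Theorem 4]{fiber}. You instead try to reprove that theorem, and the decisive step is missing.

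In (2)$\Rightarrow$(1), your depth count does show that $R$ is Cohen--Macaulay. The Gorenstein property, however, is left as the ``main obstacle'', and the proposed reduction to dimension $0$ cannot work.
\begin{itemize}
\item Under (2), $T\cong P/J$ with $J\cong\omega_P$ a proper ideal, so $T$ is Gorenstein of dimension $d-1$. Hence you can cut down by elements regular on $T$ only until $d=1$, which is exactly the algebroid-curve case.
\item Cutting further destroys the pull-back. For $x$ regular on $R,P,S$, the kernel of $R/xR\to P/xP\oplus S/xS$ is $(0:_Tx)\neq0$.
\item No zero-dimensional instance exists at all: comparing lengths, an Artinian ring has no proper ideal isomorphic to its canonical module.
\item In any Artinian fiber product with $J,J'\neq0$, take $0\neq\sigma\in J\cap\operatorname{soc}P$ and $0\neq\sigma'\in J'\cap\operatorname{soc}S$. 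Then $(\sigma,0)$ and $(0,\sigma')$ are independent socle elements of $R$, because $(J\times 0)\cap(0\times J')=0$. So there is no ``identification'', and the socle you want to be one-dimensional is at least two-dimensional.
\end{itemize}
What is needed is a gluing argument in dimension $d\ge1$. For example, after completing:
\begin{itemize}
\item Apply $\Hom_R(-,\omega_R)$ to $0\to R\to P\oplus S\to T\to0$ to get $0\to\omega_P\oplus\omega_S\to\omega_R\to\omega_T\to0$.
\item Apply it also to $0\to 0\times J'\to R\to P\to0$, with $0\times J'\cong\omega_S$, to get $\omega_R/\omega_P\cong\Hom_S(\omega_S,\omega_S)=S$. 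Symmetrically, $\omega_R/\omega_S\cong P$.
\item Since $\omega_T\cong T$, a lift $w\in\omega_R$ of a generator of $\omega_T$ maps to units of $P$ and of $S$. Hence $\omega_R=Rw+\omega_S$.
\item The map $\omega_S\to\omega_R/\omega_P\cong S$ is injective, and its image is $J'$ because its cokernel is $\omega_T\cong S/J'$. Now $(0\times J')w\subseteq\omega_S$ has the same image, so $(0\times J')w=\omega_S$.
\item Therefore $\omega_R=Rw\cong R$.
\end{itemize}

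The direction (1)$\Rightarrow$(2) has two further holes.

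First, equidimensionality of $R$ only controls the minimal primes of $P$ that do not contain $J$, so $\dim P=\dim R$ does not follow. You need $J\neq0$ together with $\Hom_R(P,R)\cong J\times 0\neq0$; for Cohen--Macaulay $R$ this forces $\operatorname{Supp}P$ to contain a minimal prime of $R$. This is not a formality. Take $A=C=k\llbracket t\rrbracket$, $B=k$, $f$ the residue map, $J=0$, $g=\mathrm{id}$, $J'=(t)$. All hypotheses of Theorem \ref{gorensteintheorem} hold, and $R\cong k\llbracket t\rrbracket$ is Gorenstein. Yet $f(A)+J=k$ has no canonical module isomorphic to $0$, so the statement itself needs the extra hypothesis $J\neq0$.

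Second, ``maximal Cohen--Macaulay canonical module plus $(S_1)$'' does not imply Cohen--Macaulay. The two-dimensional non-Cohen--Macaulay domain $k\llbracket x^4,x^3y,xy^3,y^4\rrbracket$ is $(S_1)$, and its canonical module, being $(S_2)$, is maximal Cohen--Macaulay. The sequence $0\to J\times0\to R\to S\to0$ only yields $\depth S\ge d-1$. A correct route is linkage:
\begin{itemize}
\item $J\times 0$ and $0\times J'$ are mutual annihilators in $R$, by Lemma \ref{AnnLemma} and its mirror (which uses $\Ann_P(J)=0$).
\item $J\times 0\cong\omega_P$ is maximal Cohen--Macaulay, so $\ext^i_R(J\times0,R)=0$ for $i>0$.
\item The natural map $R\to\Hom_R(J\times 0,R)\cong\Hom_P(\omega_P,\omega_P)=P$ is surjective.
\item Hence $\ext^i_R(S,R)=0$ for all $i>0$, so $S$ is Cohen--Macaulay by local duality, with $\omega_S\cong\Hom_R(S,R)=0\times J'$.
\end{itemize}
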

\begin{proof}
    It is known that $A\bowtie^{f,g}(J,J')$ is given by the pull--back $A\bowtie^{f,g}(J,J')=\pi_f \times_{A/I_0}\pi_g$ with the diagram
    \[
\begin{tikzpicture}[>=Stealth]
\matrix (m) [matrix of math nodes, row sep=3em, column sep=4em] {
  f(A)+J        & A/I_0 \\
  A\bowtie^{f,g}(J,J') & g(A)+J' \\
};

\draw[-{>>}] (m-2-1) -- (m-1-1);        
\draw[-{>}] (m-1-1) -- node[above] {$\pi_f$} (m-1-2); 
\draw[-{>>}] (m-2-1) -- (m-2-2);        
\draw[-{>}] (m-2-2) -- node[right] {$\pi_g$} (m-1-2); 
\end{tikzpicture}
\]
where $\pi_f:f(A)+J\to A/I_0$ and $\pi_g:g(A)+J'\to A/I_0$ are surjective ring homomorphisms given by $\pi_f:f(a)+j\mapsto a+I_0$ and $\pi_g:g(a)+j'\mapsto a+I_0$ respectively. The theorem now directly follows from the description of Gorenstein pull--back constructions given in \cite[Theorem 4]{fiber}.
\end{proof}

We can recover the characterization of the Gorenstein property for the amalgamated algebra, amalgamated duplication and idealization as in \cite{DFF3}, \cite{Gorenstein} and \cite{Nagatagorenstein}.

\begin{corollary}
        Assume that $(A,\mathfrak{m})$ is a local ring with the proper ideal $g^{-1}(J')$, and $J'$ is a finitely generated $A$--module with $\Ann_{g(A)+J'}(J')=0\neq J'$ and $J'\subseteq \jac(C)$. Let $A$ be Cohen--Macaulay and $g(A)+J'$ be $(S_1)$ and equidimensional. Then, the following are equivalent.
    \begin{enumerate}
        \item $A\bowtie^{g}J'$ is Gorenstein,
        \item $g(A)+J'$ is Cohen--Macaulay. Furthermore, $A$ and $g(A)+J'$ have canonical modules isomorphic to $g^{-1}(J')$ and $J'$ respectively.
    \end{enumerate}
\end{corollary}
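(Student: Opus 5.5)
The plan is to specialize Theorem \ref{gorensteintheorem} to the amalgamation, using the identification from the introduction. Replacing $f$ by $g$ and $J$ by $J'$ (as announced at the start of this section), we have
\[
A\bowtie^{g}J'\cong A\bowtie^{\iota,g}(I,J'), \qquad I=g^{-1}(J'),\quad \iota=\mathrm{id}_A .
\]
The first step is to record the data of this bi--amalgamation. Here $\iota^{-1}(I)=I=g^{-1}(J')$, so the compatibility condition holds and $I_0=I=g^{-1}(J')$. The ring $\iota(A)+I$ is just $A$, and the ideal in the first factor is $I\subseteq A$.

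The second step is to verify each hypothesis of Theorem \ref{gorensteintheorem} for this data.
\begin{enumerate}
\item The ideal $I_0=g^{-1}(J')$ is proper by assumption.
\item The module $I$ is a finitely generated $A$--module because $A$ is Noetherian local.
\item The condition $\Ann_{g(A)+J'}(J')=0\neq J'$ and the inclusion $J'\subseteq\jac(C)$ are assumed directly.
\item The inclusion $I\subseteq\jac(A)=\mathfrak{m}$ holds because $I$ is proper and $A$ is local.
\item The role of $f(A)+J$ is played by $\iota(A)+I=A$, which is Cohen--Macaulay by hypothesis.
\item The ring $g(A)+J'$ is $(S_1)$ and equidimensional by assumption.
\end{enumerate}

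The third step is to translate the conclusion of Theorem \ref{gorensteintheorem}. It states that $A\bowtie^{\iota,g}(I,J')$ is Gorenstein if and only if $g(A)+J'$ is Cohen--Macaulay and the two rings $\iota(A)+I=A$ and $g(A)+J'$ have canonical modules isomorphic to $I=g^{-1}(J')$ and $J'$ respectively. This is exactly condition (2) of the corollary. Gorensteinness is invariant under ring isomorphism, so condition (1) transfers as well.

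The main obstacle is making sure the identification $A\bowtie^{g}J'\cong A\bowtie^{\iota,g}(I,J')$ is the one used in the introduction. The map is $(a,g(a)+j')\mapsto(a,g(a)+j')$, viewed with first coordinate $\iota(a)+i$ for $i\in I$. We also need that the pull--back square in the proof of Theorem \ref{gorensteintheorem} becomes the standard square $A\to A/I\leftarrow g(A)+J'$. Once that is confirmed, the result is a direct instance of the theorem.
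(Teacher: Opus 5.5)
Your proposal is correct and follows the paper's route exactly: the paper presents this corollary as a direct specialization of Theorem \ref{gorensteintheorem}, via the identification $A\bowtie^{g}J'\cong A\bowtie^{\iota,g}(g^{-1}(J'),J')$ from the introduction, and your hypothesis check matches. The identification you flag as the main obstacle is in fact an equality of subrings of $A\times C$: since $g(i)\in J'$ for $i\in g^{-1}(J')$, the term $i$ can be absorbed into $a$. Also, because the theorem is applied as a black box, you do not need to re-examine its pull-back square.
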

\begin{corollary}
        Assume that $(A,\mathfrak{m})$ is a local ring with a finitely generated module $M$ with $\Ann_A(M)=0$. Let $A$ be Cohen--Macaulay. Then, the following are equivalent.
    \begin{enumerate}
        \item $A\ltimes M $ is Gorenstein,
        \item $M$ is a canonical module of $A$.
    \end{enumerate}
\end{corollary}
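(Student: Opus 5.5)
The statement to prove is the idealization corollary: for $(A,\mathfrak{m})$ Cohen--Macaulay and $M$ finitely generated with $\Ann_A(M)=0$, the ring $A\ltimes M$ is Gorenstein if and only if $M$ is a canonical module of $A$. I plan to obtain it as a special case of the bi--amalgamation machinery. The idealization $A\ltimes M$ should be realized as a bi--amalgamation by passing through the amalgamated algebra. Put $C=A\ltimes M$ and $J'=0\ltimes M$, and let $g:A\to C$ be the natural embedding $a\mapsto(a,0)$. Then
\[
A\bowtie^{g}J'\cong A\bowtie^{\iota,g}(I,J'), \qquad I=g^{-1}(J')=0,
\]
and this ring is isomorphic to $A\ltimes M$. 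Note that $g(A)+J'=C=A\ltimes M$ and $f(A)+J=A$ (with $f=\iota$ and $J=0$). Under this dictionary the corollary becomes a specialisation of the amalgamated-algebra corollary that precedes it, itself a case of Theorem \ref{gorensteintheorem}.

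Next I check the hypotheses. Since $M$ is finitely generated, $J'$ is a finitely generated $A$--module. Also $J'^2=0$, so $J'$ lies in the nilradical and hence in $\jac(C)$.

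The annihilator condition needs a short argument. Suppose $(a,m)\in C$ kills $0\ltimes M$. Then $a m'=0$ for all $m'\in M$, so $a\in\Ann_A(M)=0$. This shows that $\Ann_C(J')=0\ltimes M\neq 0$, not $0$. So the hypothesis $\Ann_{g(A)+J'}(J')=0$ of the earlier corollary genuinely fails for idealizations.

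I will therefore not apply Theorem \ref{gorensteintheorem} directly. Instead I will run the argument of Proposition \ref{gorprop} by hand, replacing Lemma \ref{AnnLemma} with the version for idealizations.

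For (1)$\Rightarrow$(2), I assume $A\ltimes M$ is Gorenstein. Since $A\ltimes M$ is finite over $A$, it has dimension $\dim A$. The canonical module of $A=(A\ltimes M)/(0\ltimes M)$ is then
\[
\Hom_{A\ltimes M}(A,A\ltimes M)\cong\Ann_{A\ltimes M}(0\ltimes M)=0\ltimes M\cong M
\]
as $A$--modules. The middle equality is where $\Ann_A(M)=0$ is used, exactly as in the computation above. So $M\cong\omega_A$.

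For (2)$\Rightarrow$(1), I assume $M\cong\omega_A$. By the idealization corollary of Theorem \ref{cohenmacthm}, $A\ltimes M$ is Cohen--Macaulay, since $\omega_A$ is maximal Cohen--Macaulay. It remains to show that the canonical module of $A\ltimes M$ is free of rank one. I would use the standard formula $\omega_{A\ltimes M}\cong\Hom_A(A\ltimes M,\omega_A)$, which holds because $A\ltimes M$ is finite over $A$ of the same dimension. This gives
\[
\omega_{A\ltimes M}\cong\Hom_A(A\oplus\omega_A,\omega_A)\cong\omega_A\oplus A .
\]
Here I use $\Hom_A(\omega_A,\omega_A)\cong A$, valid for $A$ Cohen--Macaulay. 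The final check is that the $A\ltimes M$--module structure on $\omega_A\oplus A$ agrees with that of $A\ltimes\omega_A$ itself: the element $(0,m)$ should send the $A$-component into the $\omega_A$-component. Hence $\omega_{A\ltimes M}\cong A\ltimes M$ and the ring is Gorenstein.

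I expect this module-structure verification to be the main obstacle, since the bi--amalgamated Theorem \ref{gorensteintheorem} cannot be cited verbatim because its annihilator hypothesis fails here. Alternatively, \cite{Nagatagorenstein} can be invoked at this step.
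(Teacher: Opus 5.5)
Your proof is correct, and it takes a different route from the paper's. The paper gives no separate proof: it lists this corollary as a specialization of Theorem \ref{gorensteintheorem}, that is, of the Gorenstein criterion for the pull--back $\pi_f\times_{A/I_0}\pi_g$ from \cite[Theorem 4]{fiber}, with the hypothesis $\Ann_{g(A)+J'}(J')=0$ read as $\Ann_A(M)=0$.

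Your hypothesis check shows this reading is not literally justified. For $A\ltimes M\cong A\bowtie^{\iota,g}(0,0\ltimes M)$ one has $\Ann_{A\ltimes M}(0\ltimes M)=\Ann_A(M)\ltimes M\supseteq 0\ltimes M\neq 0$. The swapped realization has $J'=0$, which Theorem \ref{gorensteintheorem} also excludes. Indeed, Lemma \ref{AnnLemma}, if it applied, would give $\Hom_{A\ltimes M}(A,A\ltimes M)\cong J\times 0=0$, whereas the correct value, which you compute, is $0\ltimes M\cong M$.

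Your direct argument is essentially the classical one for idealizations:
\begin{itemize}
\item (1)$\Rightarrow$(2) goes via $\omega_A\cong\Hom_{A\ltimes M}(A,A\ltimes M)\cong\Ann_{A\ltimes M}(0\ltimes M)$.
\item (2)$\Rightarrow$(1) goes via $\omega_{A\ltimes M}\cong\Hom_A(A\ltimes M,\omega_A)\cong\omega_A\oplus\Hom_A(\omega_A,\omega_A)\cong\omega_A\oplus A$.
\end{itemize}
It needs only the change-of-rings formula \cite[Theorem 21.15]{eisenbud} and $\Hom_A(\omega_A,\omega_A)\cong A$. The formula applies because $A$ and $A\ltimes M$ are both Cohen--Macaulay and $A\ltimes M$ is finite over $A$ of the same dimension. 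The paper's route, where its hypotheses hold, treats all bi--amalgamations uniformly. For idealizations, however, it is your route that actually proves the statement.

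The module-structure check you flag as the main obstacle is routine. Write $\phi\in\Hom_A(A\ltimes\omega_A,\omega_A)$ as the pair $(b,u)$ with $u=\phi(1,0)$ and $\phi(0,n)=bn$. Then
\[
((a,m)\cdot\phi)(x,n)=\phi(ax,an+xm)=x(au+bm)+abn,
\]
so $(a,m)\cdot(b,u)=(ab,au+bm)$. This is exactly multiplication in $A\ltimes\omega_A$.
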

\begin{corollary}
        Assume that $(A,\mathfrak{m})$ is a local ring with the proper, non--zero ideal $I$ with $\Ann_{A}(I)=0$. Let $A$ be Cohen--Macaulay. Then, the following are equivalent.
    \begin{enumerate}
        \item $A\bowtie I$ is Gorenstein,
        \item $A$ has a canonical module isomorphic to $I$.
    \end{enumerate}
\end{corollary}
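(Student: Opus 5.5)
We specialize Theorem \ref{gorensteintheorem} to the amalgamated duplication, using $A\bowtie I\cong A\bowtie^{\iota,\iota}(I,I)$ from the introduction. Take $B=C=A$, $f=g=\mathrm{id}_A$ and $J=J'=I$. Then $I_0=f^{-1}(J)=I$, which is proper by assumption, and $f(A)+J=g(A)+J'=A$.

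The first step is to check the hypotheses of Theorem \ref{gorensteintheorem}.
\begin{enumerate}
\item $J=J'=I$ are finitely generated $A$--modules because $A$ is Noetherian.
\item $\Ann_{g(A)+J'}(J')=\Ann_A(I)=0$, and $J'=I\neq 0$.
\item $J\subseteq \jac(B)=\mathfrak{m}$ holds because $I$ is a proper ideal of the local ring $A$.
\item $f(A)+J=A$ is Cohen--Macaulay by assumption.
\item $g(A)+J'=A$ is Cohen--Macaulay, hence $(S_1)$.
\item $A$ is equidimensional, since a local Cohen--Macaulay ring has no embedded primes and all its minimal primes $\mathfrak p$ satisfy $\dim A/\mathfrak p=\dim A$.
\end{enumerate}

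The second step is to read off the equivalence. Theorem \ref{gorensteintheorem} says $A\bowtie I$ is Gorenstein if and only if $A$ is Cohen--Macaulay and $A$ has a canonical module isomorphic to $I$, where this last condition appears twice (once for $f(A)+J$, once for $g(A)+J'$). Since $A$ is already Cohen--Macaulay, this collapses to statement (2).

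The only step needing any care is equidimensionality. Besides that, one should double-check that the pull-back description used in the proof of Theorem \ref{gorensteintheorem} degenerates correctly here: the fiber product $A\times_{A/I}A$ with both maps the canonical projection is indeed $A\bowtie I$. As a consistency check, for the direction (1)$\Rightarrow$(2) Proposition \ref{gorprop} applies directly, because $f=\mathrm{id}$ is injective.
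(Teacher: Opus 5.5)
Your proposal is correct and follows the paper's route. The paper obtains this corollary by specializing Theorem \ref{gorensteintheorem} to $A\bowtie I = A\bowtie^{\iota,\iota}(I,I)$ with $B=C=A$ and $J=J'=I$, exactly as you do. Your explicit check of the hypotheses, in particular that a Cohen--Macaulay local ring is $(S_1)$ and equidimensional, fills in what the paper leaves implicit.
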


\section{Algebroid Curves Arising from Bi--amalgamations and Amalgamations}

    We say that a ring $A$ is an algebroid curve over an algebraically closed field $k$ if it is a one-dimensional, reduced, complete local $k$--algebra with the residue field $k$. Alternatively, by Cohen's structure theorem, an algebroid curve is a one-dimensional, reduced, local ring of the form $A=k\llbracket x_1,\ldots ,x_n\rrbracket/\left( I_1 \cap \cdots\cap I_h\right)$ where $I_1,\ldots,I_h$ are prime ideals of $k\llbracket x_1,\ldots ,x_n\rrbracket$ with height $n-1$. The ring $A_i=k\llbracket x_1,\ldots ,x_n\rrbracket/I_i$ is called the $i$-th algebroid branch of $A$ and the number $h$ is called the number of branches. Algebroid curves arise naturally in the theory of curve singularities as completions of local rings of irreducible algebraic curves over $k$ according to the topology induced by their maximal ideal \cite{Gorenstein}. \medskip

    We will now define the valuation semigroup of the algebroid curve $A$ following \cite{delgado}. It can be shown that $Q(A)=k((t_1))\times \ldots \times k((t_h))$ and the integral closure of $A$ in $Q(A)$ is given by $\bar{A}=k\llbracket t_1 \rrbracket\times\ldots\times k\llbracket t_h \rrbracket$. Notice that the quotient field of $A_i$ is $k((t_i))$ and $\bar{A}_i=k\llbracket t_i\rrbracket$ is a discrete valuation ring with the valuation $v_i$, given by the order of a power series. For an element $q=(q_1,\ldots,q_h)\in Q(A)$, we define $v(q)=(v_1(q_1),\ldots,v_h(q_h))$. Let $I$ be any regular fractional ideal of $A$, then we set
    \begin{align*}
        v(I)=\{v(x):x\in I \ \text{is a regular element of } Q(A)\}.
    \end{align*}
    The set $S=v(A)\subseteq \mathbb{N}^h$ is a semigroup, called the valuation semigroup of $A$. The conductor $\zeta=\{q\in \bar{A}:q \bar{A}\subseteq A\}$ of $\bar{A}$ in $A$ is then a regular ideal of $\bar{A}$. Thus it can be written as $\left(t_1^{\delta_1}\right)\times\ldots\times \left(t_h^{\delta_h}\right)$ for a vector $\delta=(\delta_1,\ldots,\delta_h)$. The element $\delta$ is called the conductor of $S$. It is characterized as the minimal element of $S$ (with respect to the product order $\alpha \le\beta$ if $\alpha_i\le \beta_i$ for all $i=1,\ldots,h$) with the property that any vector strictly greater than $\delta$ is in $S$, i.e., $\delta+\mathbb{Z}_+^h\subseteq S$. To introduce the notion of symmetry, we need the following definition. Let $\alpha\in \mathbb{Z}^h$ be given. For each $i=1,\ldots,h$, we set
    \begin{align*}
        \Delta_i(\alpha)=\{\beta\in S:\beta_i=\alpha_i \ \text{and } \beta_j>\alpha_j \ \text{for all } j\neq i\}
    \end{align*}
    with $\Delta(\alpha)=\bigcup_{i=1}^{h}\Delta_i(\alpha)$. Finally, we say that $S$ is symmetric when $\alpha\in S$ if and only if $\Delta(\gamma-\alpha)=\emptyset$, where $\gamma=\delta-(1,\ldots,1)$. \medskip
    
    A beautiful theorem by Delgado \cite{delgado} states that the semigroup $S$ is symmetric if and only if the algebroid curve $A$ is Gorenstein. In the case of algebroid branches with $h=1$, the semigroup $S$ is numerical and the symmetry is the usual symmetry of semigroups in $\mathbb{Z}_+$, which has been extensively studied \cite{apery,herzoggen}. The number $g=\max (\mathbb{N}\setminus S)$ is called the Frobenius number and is crucial to the study of semigroups. Let us remark that $g=\gamma=\delta-1$ and the symmetry condition is given by $\alpha\in S$ if and only if $g-\alpha\notin S$. Delgado's theorem was first proved for $h=1$ by Kunz in \cite{kunz} and the general case was elaborated further in \cite{campillo}.\medskip

    All algebroid curves are Cohen--Macaulay, thus $A$ is Gorenstein if and only if it is quasi--Gorenstein. Therefore, it is natural to ask whether a canonical module always exists. A brief discussion (we refer the reader to \cite{Gorenstein} or \cite{dannacan}) shows that a canonical module, which can be identified as a fractional ideal, exists and can always be taken to satisfy $\omega_A\subseteq A$ or $A\subseteq\omega_A\subseteq \bar{A}$ for convenience. To determine when such a fractional ideal is a canonical module, we must introduce the canonical relative ideal $K$ of $S$ by
    \begin{align*}
        K=\{\alpha \in \mathbb{Z}^h: \Delta(\gamma-\alpha)=\emptyset\},
    \end{align*}
    which can be found in \cite{dannacan}. Obviously, $S$ is symmetric if and only if $K=S$. In the case $h=1$, the canonical relative ideal is given by $K=\{\alpha\in \mathbb{Z}:g-\alpha\notin S\}$. Jäger has shown in \cite{Jager} that a fractional ideal $I$ with $A\subseteq I \subseteq \bar{A}$ is a canonical ideal if and only if $v(I)=K$. Under minimal assumptions, Jäger's result also holds for $h\ge2$ (see \cite{dannacan}).\medskip

    Amalgamated duplications of algebroid curves can be used as an efficient tool for constructing Gorenstein algebroid curves. Indeed, it is shown in \cite{Gorenstein} that the amalgamated duplication of an algebroid curve along a proper regular ideal is also an algebroid curve with twice as many branches. As a consequence of the characterization of the Gorenstein property for amalgamated duplication, the duplicated algebroid curve is Gorenstein if and only if it is obtained through duplication along a canonical ideal. To the best of our knowledge, amalgamated algebroid curves have never been investigated before in the literature. In this section, we prove analogous results in the context of bi--amalgamations. Our first result is that the bi--amalgamated algebra is also an algebroid curve.

\begin{theorem}\label{algebroidthm}
    Suppose $A$ is an algebroid curve, $f:A\to B$ and $g:A\to C$ are two $k$--algebra homomorphisms into algebroid curves $f(A)+J$ and $g(A)+J'$ where $J$ and $J'$ are proper, regular ideals of $B$ and $C$ respectively, which are finitely generated $A$--modules with $f^{-1}(J)=g^{-1}(J')$, $J\subseteq \jac(B)$ and $J'\subseteq \jac(C)$. Then $A\bowtie^{f,g}(J,J')$ is also an algebroid curve with $h_{f(A)+J}+h_{g(A)+J'}$ branches. Furthermore, there is a bijective correspondence between the branches of $A\bowtie^{f,g}(J,J')$ and those of $f(A)+J$ and $g(A)+J'$ where corresponding branches are isomorphic.
\end{theorem}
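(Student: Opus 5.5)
We need to check that $R:=A\bowtie^{f,g}(J,J')$ is one-dimensional, reduced, complete and local with residue field $k$, and then identify its minimal primes. The plan is to use Proposition~\ref{Prop.introd.} for the local and spectral structure, Theorem~\ref{dimdepthcalclemma} for the dimension, and the pull--back description from the proof of Theorem~\ref{gorensteintheorem} for completeness.

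\emph{Local with residue field $k$.} Since $A$ is local and $J\times J'\subseteq\jac(B\times C)$, Proposition~\ref{Prop.introd.}(3) shows that $R$ is local with maximal ideal $\mathfrak{m}\bowtie^{f,g}(J,J')$. By Proposition~\ref{Prop.introd.}(1), the residue field is $A/\mathfrak{m}=k$.

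\emph{Reduced.} $R$ is a subring of $(f(A)+J)\times(g(A)+J')$, which is a product of reduced rings, so $R$ is reduced.

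\emph{Dimension one.} Theorem~\ref{dimdepthcalclemma}(1) gives $\dim R=\dim A/(\ker f\cap\ker g)$, and this is at most $\dim A=1$. It is also at least $1$, because $R$ surjects onto $f(A)+J$, which is an algebroid curve of dimension one. Alternatively, one can argue that $f(A)+J$ is finite over $f(A)$ since $J$ is a finitely generated $A$--module.

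\emph{Completeness.} I would use the fiber product $R=(f(A)+J)\times_{A/I_0}(g(A)+J')$. This is the kernel of the map $(f(A)+J)\times(g(A)+J')\to A/I_0$ sending $(x,y)\mapsto\pi_f(x)-\pi_g(y)$, so $R$ is a closed subset of a complete ring for the $\mathfrak{m}$--adic topology. A cleaner route: $R$ is a finitely generated module over the complete local ring $A/(\ker f\cap\ker g)$, by the argument in the proof of Theorem~\ref{dimdepthcalclemma}(1). A finite module over a complete local ring is complete, and its $\mathfrak{m}$--adic topology agrees with the $\mathfrak{m}_R$--adic one because $\sqrt{\mathfrak{m}R}=\mathfrak{m}_R$ by the Corollary after Lemma~\ref{radicallemma}. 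Hence $R$ is complete.

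\emph{Branches.} By definition the branches correspond to the minimal primes of $R$. Since $J$ and $J'$ are regular, no minimal prime $L$ of $f(A)+J$ contains $J$: a minimal prime consists of zero divisors, while $J$ contains a nonzerodivisor. The same holds for $J'$. Also, primes of the form $\mathfrak{p}\bowtie^{f,g}(J,J')$ contain $J\times J'$, which contains the regular element $(x,y)$, so they are not minimal. Proposition~\ref{Prop.introd.}(2)(b) then shows that the minimal primes of $R$ are exactly the $\overline{L}$ with $L$ minimal in $f(A)+J$, together with the $\overline{L'}$ with $L'$ minimal in $g(A)+J'$. These are all distinct: $\overline{L}$ contains $0\times J'$, whereas $\overline{L'}$ does not contain $0\times J'$ because $J'\not\subseteq L'$. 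This gives $h_{f(A)+J}+h_{g(A)+J'}$ branches.

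For the isomorphism of branches, take $L$ minimal in $f(A)+J$. Then $\overline{L}\supseteq 0\times J'$, and by Proposition~\ref{Prop.introd.}(1),
\[
R/\overline{L}\cong (R/(0\times J'))/(\overline{L}/(0\times J'))\cong (f(A)+J)/L .
\]
The case of $L'$ is symmetric.

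The step I expect to be the main obstacle is completeness, specifically matching the adic topologies. The radical equality from the Corollary after Lemma~\ref{radicallemma} should handle it.
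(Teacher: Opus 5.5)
Your proposal is correct and follows essentially the paper's own route. You get locality and the residue field from Proposition~\ref{Prop.introd.}, reducedness from being a subring of $(f(A)+J)\times(g(A)+J')$, and the dimension from Theorem~\ref{dimdepthcalclemma}; completeness comes from finiteness over $A$ together with the equivalence of the $\mathfrak{m}R$-- and $\mathfrak{m}_R$--adic topologies, and the branches from Proposition~\ref{Prop.introd.}(2)(b) and the isomorphism $R/\overline{L}\cong (f(A)+J)/L$. The differences are minor: the surjection $R\to f(A)+J$ gives the lower bound on dimension more directly than the paper's height argument, and the regular element $(x,y)\in J\times J'$ cleanly rules out minimal primes of the form $\mathfrak{p}\bowtie^{f,g}(J,J')$.
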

\begin{proof}
    The condition $J\subseteq \jac(B)$ and $J'\subseteq \jac(C)$ with Proposition \ref{Prop.introd.}(3) implies that $A\bowtie^{f,g}(J,J')$ is local. It is furthermore reduced as it is a subring of $(f(A)+J)\times (g(A)+J')$. Let us define a $k$--algebra homomorphism $(f,g):A\to A\bowtie^{f,g}(J,J')$ by $(f,g)(a)=(f(a),g(a))$. Under the action $a\cdot(b,c)=(f(a)b,g(a)c)$, $A\bowtie^{f,g}(J,J')$ is a submodule of $(f(A)+J)\times (g(A)+J')$, which is a finite $A$--module. As $A$ is Noetherian, $A\bowtie^{f,g}(J,J')$ is also a finite $A$--module and $(f,g)$ is a finite morphism. By Theorem \ref{dimdepthcalclemma}, $\dim A\bowtie^{f,g}(J,J')=\dim A/(\ker f\cap \ker g)$. As $f(A)+J$ and $g(A)+J'$ are one-dimensional, we get $\height (\ker f)=\height (\ker g)=\height (\ker f\cap \ker g)=0$, which implies $\dim A\bowtie^{f,g}(J,J')=\dim A=1$. The maximal ideal of $A\bowtie^{f,g}(J,J')$ is $\mathfrak{m}_A\bowtie^{f,g}(J,J')$ by Proposition \ref{Prop.introd.}(3). Therefore, using Proposition \ref{Prop.introd.} \ref{isomlemma}, the residue field of $A\bowtie^{f,g}(J,J')$ is $\left(A\bowtie^{f,g}(J,J')\right)/\left(\mathfrak{m}_A\bowtie^{f,g}(J,J')\right)=A/(\mathfrak{m}_A+I_0)=k$ as $I_0\subseteq\mathfrak{m}_A$. This can alternatively be seen by $(f,g)^{-1}\left(\mathfrak{m}_A\bowtie^{f,g}(J,J')\right)=\mathfrak{m}_A$ or $\pi_{f(A)+J}^{-1}\left(\mathfrak{m}_A\bowtie^{f,g}(J,J')\right)=\mathfrak{m}_{f(A)+J}$, where $\pi_{f(A)+J}:A\bowtie^{f,g}(J,J')\to f(A)+J$ is the natural projection to the first coordinate. The completeness is easily deduced from the finite morphism $(f,g)$ with $\mathfrak{m}_A$ and $\mathfrak{m}_A\bowtie^{f,g}(J,J')$ inducing equivalent topologies. This concludes that $A\bowtie^{f,g}(J,J')$ is an algebroid curve.\medskip

    We now investigate the branching behavior of $A\bowtie^{f,g}(J,J')$. Let $L_1,\ldots,L_{h_{f(A)+J}}$ and $L_1',\ldots,L_{h_{g(A)+J'}}'$ be the minimal primes of $f(A)+J$ and $g(A)+J'$ respectively. The fact $J\nsubseteq L_i$ and $J'\nsubseteq L_{i'}'$ for every $i$ and $i'$ follows since $J$ and $J'$ are regular, $f(A)+J$ and $g(A)+J'$ are reduced, and in a reduced ring the set of zero divisors is the union of minimal primes. We claim that $\overline{L_1},\ldots,\overline{L_{h_{f(A)+J}}},\overline{L_1'},\ldots,\overline{L_{h_{g(A)+J'}}'}$ are the minimal primes of $A\bowtie^{f,g}(J,J')$. It can be checked easily that $\overline{L_i}\neq \overline{L_{i'}'}$ from $J\subseteq \overline{L_{i'}'}$. As $J\nsubseteq L_i$, the conditions of Proposition \ref{Prop.introd.} \ref{primelemma} apply and we obtain $\left(A\bowtie^{f,g}(J,J')\right)_{\overline{L_i}}=(f(A)+J)_{L_i}$. Therefore, the minimality of $L_i$ implies the minimality of $\overline{L_i}$. A similar argument shows that the ideals $\overline{L_{i'}'}$ are minimal. Now let $\mathfrak{P}$ be a minimal prime ideal of $A\bowtie^{f,g}(J,J')$. By Proposition \ref{Prop.introd.} \ref{primelemma}, $\mathfrak{P}$ is of the form $\overline{L}$ with $L\in \spec(f(A)+J)\cup\spec(g(A)+J')$. It is obvious that non--minimality of $L$ contradicts the minimality of $\mathfrak{P}$. This shows that $A\bowtie^{f,g}(J,J')$ has $h_{f(A)+J}+h_{g(A)+J'}$ branches. It is easy to see that the projections $\overline{\pi}_{f(A)+J,i}:A\bowtie^{f,g}(J,J')\to (f(A)+J)/L_i, \overline{\pi}_{f(A)+J,i}(b,c)=b+L_i$ and $\overline{\pi}_{g(A)+J',i'}:A\bowtie^{f,g}(J,J')\to (g(A)+J')/L_{i'}', \overline{\pi}_{g(A)+J',i'}(b,c)=c+L_{i'}'$ have kernels $\overline{L_i}$ and $\overline{L_{i'}'}$. Thus they induce isomorphisms $A\bowtie^{f,g}(J,J')/\overline{L_i}\cong (f(A)+J)/L_i$ and $A\bowtie^{f,g}(J,J')/\overline{L_{i'}'}\cong (g(A)+J')/L_{i'}'$. This completes the proof.
\end{proof}

As a corollary, we get the following novel result on amalgamated algebroid curves and the result of \cite{Gorenstein} on duplicated algebroid curves.

\begin{corollary}
    Suppose $A$ is an algebroid curve, $f:A\to B$ is a $k$--algebra homomorphism into the algebroid curve $f(A)+J$ where $J$ is a proper, regular ideal of $B$, which is a finitely generated $A$--module with $J\subseteq \jac(B)$. Then $A\bowtie^{f}J$ is also an algebroid curve with $h_A+h_{f(A)+J}$ branches. Furthermore, there is a bijective correspondence between the branches of $A\bowtie^{f}J$ and those of $A$ and $f(A)+J$ where corresponding branches are isomorphic.
\end{corollary}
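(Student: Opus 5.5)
The plan is to realize the amalgamation $A\bowtie^{f}J$ as a bi--amalgamation and then apply Theorem \ref{algebroidthm} directly. As recalled in the introduction, with $I=f^{-1}(J)$ and $\iota=\mathrm{id}_A$ we have $A\bowtie^{f}J\cong A\bowtie^{\iota,f}(I,J)$. In the notation of Theorem \ref{algebroidthm} this means taking $B=A$ with the identity map and the ideal $I$, and taking $C=B$ with the map $f$ and the ideal $J$. The compatibility condition $\iota^{-1}(I)=f^{-1}(J)$ then holds by construction.

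Next I will check the hypotheses of Theorem \ref{algebroidthm} one at a time.
\begin{enumerate}
\item The first ``target'' ring is $\iota(A)+I=A$, which is an algebroid curve by assumption, and $\iota$ is a $k$--algebra homomorphism.
\item The second target ring is $f(A)+J$, which is an algebroid curve by assumption; the hypotheses on $J$ (proper, regular, finitely generated over $A$, contained in $\jac(B)$) are given.
\item The ideal $I=f^{-1}(J)$ is a finitely generated $A$--module because $A$ is Noetherian.
\item $I$ is proper: $J$ is proper and $f(1)=1\notin J$, so $1\notin I$.
\item $I\subseteq\jac(A)=\mathfrak{m}_A$, since $A$ is local and $I$ is proper.
\end{enumerate}
Applying the theorem then gives that $A\bowtie^{f}J$ is an algebroid curve with $h_A+h_{f(A)+J}$ branches, and that its branches correspond bijectively to those of $A$ and $f(A)+J$ with isomorphic corresponding branches.

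The one hypothesis that needs real work, and which I expect to be the main obstacle, is that $I$ is a \emph{regular} ideal of $A$. In Theorem \ref{algebroidthm} regularity is used only to ensure that $I$ is not contained in any minimal prime of $A$, so I aim to prove exactly that. Suppose $I\subseteq\mathfrak{p}$ for some minimal prime $\mathfrak{p}$ of $A$.

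First I will show $\ker f\subseteq I$. Since $0\in J$, every element of $\ker f$ maps into $J$, so $\ker f\subseteq f^{-1}(J)=I\subseteq\mathfrak{p}$.

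Next I will use $J$ to find a regular element and pull its behaviour back to $A$. Because $J$ is regular in the one-dimensional reduced local ring $f(A)+J$, the ring $(f(A)+J)/J\cong A/I$ has dimension $0$. Moreover, $f(A)+J$ is integral over $f(A)\cong A/\ker f$, since $J$ is finite over $A$.

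With these facts, I will argue on dimensions. On one hand, $\dim A/\ker f=\dim(f(A)+J)=1$. On the other hand, $\ker f\subseteq\mathfrak{p}$ with $\mathfrak{p}$ minimal, and $A/I$ is Artinian, so $\mathfrak{m}_A$ is the only prime containing $I$. That contradicts $I\subseteq\mathfrak{p}\neq\mathfrak{m}_A$; note that $\mathfrak{p}\neq\mathfrak{m}_A$ because $\dim A=1$.

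If this dimension argument turns out to be delicate, for instance when a minimal prime contains $\ker f$ but behaves badly, I would fall back to the observation that the proof of Theorem \ref{algebroidthm} needs only $I\nsubseteq\mathfrak{p}$ for minimal $\mathfrak{p}$. That condition follows from $\sqrt{I}=\mathfrak{m}_A$, which is exactly what $\dim A/I=0$ gives.
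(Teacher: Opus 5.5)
Your proposal is correct and follows the paper's route: the corollary is obtained by specializing Theorem \ref{algebroidthm} through the identification $A\bowtie^{f}J = A\bowtie^{\iota,f}(f^{-1}(J),J)$ with $\iota=\mathrm{id}_A$. Your verification that $f^{-1}(J)$ is regular, which the paper leaves implicit, is sound. Since $A/f^{-1}(J)\cong (f(A)+J)/J$ is zero-dimensional, $f^{-1}(J)$ lies in no minimal prime of the reduced ring $A$, and prime avoidance then shows it contains a non-zero-divisor. This lets you apply the theorem as stated without inspecting its proof; the detour through $\ker f$ and $\dim A/\ker f$ is unnecessary.
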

\begin{corollary}
    Suppose $A$ is an algebroid curve and $I$ is a proper, regular ideal of $A$. Then $A\bowtie I$ is also an algebroid curve with $2h_A$ branches. Furthermore, there is a one-to-two correspondence between the branches of $A\bowtie I$ and those of $A$ where corresponding branches are isomorphic. 
\end{corollary}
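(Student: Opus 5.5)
The plan is to deduce this corollary from Theorem \ref{algebroidthm} by realizing the amalgamated duplication as a bi--amalgamation, namely $A\bowtie I\cong A\bowtie^{\iota,\iota}(I,I)$ with $B=C=A$ and $f=g=\iota=\operatorname{id}_A$, as recalled in the introduction. Everything then reduces to checking the hypotheses of Theorem \ref{algebroidthm} for this choice and reading off the branch count.

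First, the homomorphisms $f=g=\operatorname{id}_A$ are $k$--algebra homomorphisms. The rings $f(A)+J$ and $g(A)+J'$ are both $A+I=A$, which is an algebroid curve by assumption. The ideals $J=J'=I$ are proper and regular ideals of $B=C=A$ by hypothesis. Since $A$ is Noetherian, $I$ is a finitely generated $A$--module. The compatibility condition $f^{-1}(J)=g^{-1}(J')$ reads $I=I$, so it holds trivially. Finally, $A$ is local and $I$ is proper, so $I\subseteq\mathfrak{m}_A=\jac(A)$.

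Theorem \ref{algebroidthm} then shows that $A\bowtie I$ is an algebroid curve with $h_{f(A)+J}+h_{g(A)+J'}=h_A+h_A=2h_A$ branches. It also gives a bijection between the branches of $A\bowtie I$ and the disjoint union of the branches of the two copies of $A$, with corresponding branches isomorphic. Concretely, each minimal prime $L_i$ of $A$ produces two distinct minimal primes $\overline{L_i}$ (first coordinate) and $\overline{L_i'}$ (second coordinate) of $A\bowtie I$. These are distinct by the argument in the proof of Theorem \ref{algebroidthm}, which uses $I\nsubseteq L_i$. Both quotients are isomorphic to $A/L_i$. Composing the bijection with the two-to-one map from this disjoint union onto the branches of $A$ yields the one-to-two correspondence in the statement.

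There is no real obstacle here. The only points needing care are that the two copies of each branch are genuinely distinct minimal primes, and that the identification $A\bowtie I\cong A\bowtie^{\iota,\iota}(I,I)$ is a $k$--algebra isomorphism, so that the residue field and completeness are preserved. Both are immediate from the explicit description $\{(a,a+i)\}\leftrightarrow\{(a+i_1,a+i_2)\}$, given by $(a,a+i)\mapsto(a,a+i)$ with $i_1=0$ and $i_2=i$.
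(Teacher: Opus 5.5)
Your proposal is correct and follows the paper's route exactly. The paper obtains this corollary by specializing Theorem \ref{algebroidthm} to $B=C=A$, $f=g=\operatorname{id}_A$ and $J=J'=I$, via $A\bowtie I\cong A\bowtie^{\iota,\iota}(I,I)$. Your check of the hypotheses, and of the two distinct minimal primes $\overline{L_i}$ and $\overline{L_i'}$ lying over each branch of $A$, fills in what the paper leaves implicit. For the identification, the two rings are in fact equal as subsets of $A\times A$, since $(a+i_1,a+i_2)=(a+i_1,(a+i_1)+(i_2-i_1))$; this gives the reverse inclusion that your explicit map does not show.
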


    Instead of following the above algebraic approach, one can give an explicit representation of the algebroid curve $A\bowtie^{f,g}(J,J')$ as a quotient of a power series ring. Indeed, let $A=k\llbracket x_1,\ldots ,x_n\rrbracket/I_A$ be the quotient representation of $A$ and let us define $\varphi_{B}:k\llbracket x_1,\ldots ,x_n\rrbracket\to \bar{B}$ as the composition of the natural projection $\pi_A:k\llbracket x_1,\ldots ,x_n\rrbracket\to A$, the morphism $f:A\to B$, and the embedding $B\hookrightarrow\bar{B}$. Similarly, define $\varphi_C:k\llbracket x_1,\ldots ,x_n\rrbracket\to \bar{C}$ as the composition of the natural projection $\pi_A:k\llbracket x_1,\ldots ,x_n\rrbracket\to A$, the morphism $g:A\to C$, and the embedding $C\hookrightarrow\bar{C}$. Clearly, $\ker \varphi_B=\pi_A^{-1}(\ker f)=\ker f^{L}+I_Ak\llbracket x_1,\ldots ,x_n\rrbracket$ and $\ker \varphi_C=\pi_A^{-1}(\ker g)=\ker g^{L}+I_Ak\llbracket x_1,\ldots ,x_n\rrbracket$ where $\ker f^{L}$ and $\ker g^{L}$ are ideals generated by some fixed liftings of minimal generating sets of $\ker f$ and $\ker g$. Take liftings $i_1,\ldots,i_m\in k\llbracket x_1,\ldots ,x_n\rrbracket$ such that the projections $\overline{i_1},\ldots, \overline{i_m}\in A$ form a minimal generating set of $I_0$. We shall identify $J$ and $J'$ with their images in $\bar{B}$ and $\bar{C}$.\medskip

    Set $y_1,\ldots,y_m$ and $z_1,\ldots,z_m$ to be new indeterminates. Define two functions
    \begin{align*}
        \psi_B:k\llbracket x_1,\ldots ,x_n,y_1,\ldots,y_m,z_1,\ldots,z_m\rrbracket&\to \bar{B}; \\
        \psi_B(x_j)&=\varphi_B(x_j),\\
        \psi_B(y_{j'})&=0, \\
        \psi_B(z_{j'})&=\varphi_B(i_{j'}),
    \end{align*}
    and
    \begin{align*}
        \psi_C:k\llbracket x_1,\ldots ,x_n,y_1,\ldots,y_m,z_1,\ldots,z_m\rrbracket&\to \bar{C}; \\
        \psi_C(x_j)&=\varphi_C(x_j),\\
        \psi_C(y_{j'})&=\varphi_C(i_{j'}), \\
        \psi_C(z_{j'})&=0,
    \end{align*}
    for all $j=1,\ldots,n$ and $j'=1,\ldots,m$, and then extend them to be ring homomorphisms. Furthermore, let us define the ring homomorphism
    \begin{align*}
        \Omega:k\llbracket x_1,\ldots ,x_n,y_1,\ldots,y_m,z_1,\ldots,z_m\rrbracket\to \bar{B}\times \bar{C}; \qquad \Omega(F)=(\psi_B(F),\psi_C(F)).
    \end{align*}
    We claim that $\operatorname{im} \Omega=A\bowtie^{f,g}(J,J')$. Given any $F\in k\llbracket x_1,\ldots ,x_n,y_1,\ldots,y_m,z_1,\ldots,z_m\rrbracket$, we can decompose it uniquely as
    \begin{align*}
        F&=F_x(x_1,\ldots, x_n)+F_{x,y}(x_1,\ldots, x_n,y_1,\ldots, y_m)+F_{x,z}(x_1,\ldots, x_n,z_1,\ldots, z_m)\\
        &+F_{x,y,z}(x_1,\ldots, x_n,y_1,\ldots, y_m,z_1,\ldots,z_m).
    \end{align*} 
    It follows from the definition that $\psi_B(F_{x,y})=\psi_B(F_{x,y,z})=\psi_C(F_{x,z})=\psi_C(F_{x,y,z})=0$ and $\psi_B(F_x)=\varphi_B(F_x)=f(\overline{F_x})$, $\psi_C(F_x)=\varphi_C(F_x)=g(\overline{F_x})$. Therefore, we obtain $\Omega(F)=(f(\overline{F_x})+\psi_B(F_{x,z}),g(\overline{F_x})+\psi_C(F_{x,y}))$ with $\psi_B(F_{x,z})\in (\varphi_B(i_1),\ldots,\varphi_B(i_m))=f(I_0)=J$ and $\psi_C(F_{x,y})\in (\varphi_C(i_1),\ldots,\varphi_C(i_m))=g(I_0)=J'$. This shows that $\operatorname{im} \Omega\subseteq A\bowtie^{f,g}(J,J')$. Conversely, it is easy to see that any $(f(a)+j,g(a)+j')\in A\bowtie^{f,g}(J,J')$ is of the form $\Omega(F)$. This shows that $\operatorname{im} \Omega=A\bowtie^{f,g}(J,J')$. Therefore, we have $\operatorname{im} \Omega=A\bowtie^{f,g}(J,J')=k\llbracket x_1,\ldots ,x_n,y_1,\ldots,y_m,z_1,\ldots,z_m\rrbracket/\ker \Omega$. Furthermore, $\ker\Omega=\ker\psi_B\cap\ker\psi_C$ and
    \begin{align*}
\ker\psi_B&=(y_1,\ldots,y_m)+(z_1-i_1,\ldots,z_m-i_m)+\ker f^L+I_Ak\llbracket x_1,\ldots ,x_n,y_1,\ldots,y_m,z_1,\ldots,z_m\rrbracket\\
\ker\psi_C&=(y_1-i_1,\ldots,y_m-i_m)+(z_1,\ldots,z_m)+\ker g^L+I_Ak\llbracket x_1,\ldots ,x_n,y_1,\ldots,y_m,z_1,\ldots,z_m\rrbracket.
    \end{align*}
    Now it can be checked that $\ker\psi_B$ and $\ker\psi_C$ are respectively intersections of $h_B$ and $h_C$ prime ideals with heights $n+2m-1$. This gives us the explicit representation of $A\bowtie^{f,g}(J,J')$ as a quotient.\medskip

Now we can give the second main result of this section.

\begin{theorem}\label{algebroidgorthm}
    The algebroid curve $A\bowtie^{f,g}(J,J')$ is Gorenstein if and only if $J$ and $J'$ are isomorphic to canonical ideals of $f(A)+J$ and $g(A)+J'$ respectively. 
\end{theorem}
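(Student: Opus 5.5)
The plan is to derive the statement directly from Theorem \ref{gorensteintheorem}, under the standing hypotheses of Theorem \ref{algebroidthm}. To do this I would check, one by one, that the hypotheses of Theorem \ref{gorensteintheorem} hold automatically for algebroid curves.

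First, $A$ is a complete local ring, so it is Noetherian local. The ideal $I_0=f^{-1}(J)$ is proper because $J\subseteq \jac(B)$ is proper and $f(1)=1$. The ideals $J$ and $J'$ are finitely generated $A$--modules by assumption, and $J'\neq 0$ because it is regular.

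Second, I need $\Ann_{g(A)+J'}(J')=0$. Since $J'$ is a regular ideal of $C$, it contains a nonzerodivisor $x$ of $C$. Then $x$ is also a nonzerodivisor in the subring $g(A)+J'$, and $x\in J'$. Hence anything annihilating $J'$ kills $x$ and must be zero.

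Third, $f(A)+J$ and $g(A)+J'$ are algebroid curves. They are therefore one-dimensional and reduced, and hence Cohen--Macaulay, since a reduced one-dimensional local ring has depth $1$. In particular $g(A)+J'$ satisfies $(S_1)$. It remains to check that $g(A)+J'$ is equidimensional. For this I would use the description of the minimal primes $L_i'$ from the proof of Theorem \ref{algebroidthm}. Each branch $(g(A)+J')/L_i'$ is an algebroid branch, which is one-dimensional: in Cohen's presentation all the primes $I_i$ have height $n-1$. Thus every minimal prime has coheight $1=\dim(g(A)+J')$.

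With all hypotheses verified, Theorem \ref{gorensteintheorem} says that $A\bowtie^{f,g}(J,J')$ is Gorenstein if and only if $g(A)+J'$ is Cohen--Macaulay and $f(A)+J$, $g(A)+J'$ have canonical modules isomorphic to $J$, $J'$. The Cohen--Macaulay clause is automatic, which leaves the canonical-module clause.

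The final step is to pass from canonical modules to canonical ideals. Since $J\subseteq f(A)+J$ is an ideal of that ring, having a canonical module isomorphic to $J$ is the same as $J$ being (isomorphic to) a canonical ideal of $f(A)+J$, and likewise for $J'$. Here I would be careful that the canonical module is unique up to isomorphism, so this identification is harmless.

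The main obstacle I expect is the bookkeeping in the equidimensionality check and in confirming that "regular" is meant in the rings $f(A)+J$ and $g(A)+J'$ and not only in $B$ and $C$. If regularity is only given in $C$, the nonzerodivisor argument above still works, because a nonzerodivisor of $C$ stays a nonzerodivisor in any subring of $C$.

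As a sanity check, one could alternatively argue through Delgado's theorem. By Theorem \ref{algebroidthm}, the value semigroup of $A\bowtie^{f,g}(J,J')$ would be built from those of the two component curves. I would not pursue this route; the reduction to Theorem \ref{gorensteintheorem} is the intended path.
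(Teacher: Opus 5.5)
Your proposal is correct and follows the paper's proof, which likewise derives the result directly from Theorem \ref{gorensteintheorem} by noting that algebroid curves are Noetherian and Cohen--Macaulay, hence $(S_1)$ and equidimensional. Your version is more complete, since you also check the hypotheses that the paper leaves implicit: $\Ann_{g(A)+J'}(J')=0\neq J'$, obtained from a nonzerodivisor of $C$ lying in the regular ideal $J'$, and the properness of $I_0$.
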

\begin{proof}
    The proof follows from the characterization Theorem \ref{gorensteintheorem} as all algebroid curves are Noetherian and Cohen--Macaulay, thus satisfy the Serre condition $(S_1)$ and are equidimensional.
\end{proof}
Similarly, we get the following new result on amalgamations and the result of \cite{Gorenstein}.

\begin{corollary}\label{algebroidgorcor}
    The algebroid curve $A\bowtie^{f}J$ is Gorenstein if and only if $J$ is isomorphic to the canonical ideal of $f(A)+J$.
\end{corollary}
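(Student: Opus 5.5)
Corollary \ref{algebroidgorcor} is the special case $f=\iota=\mathrm{id}_A$ of Theorem \ref{algebroidgorthm}. I will use the identification $A\bowtie^{f}J\cong A\bowtie^{\iota,f}(I,J)$ with $I=f^{-1}(J)$, which was recalled in the introduction. Under this identification the first factor $\iota(A)+I$ is just $A$, and the second factor is $f(A)+J$.

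\textbf{Step 1: the hypotheses of Theorem \ref{algebroidthm} hold for the pair $(\iota,f)$.} The ring $A$ is an algebroid curve, and $I=f^{-1}(J)$ is an ideal of $A$ with $\iota^{-1}(I)=f^{-1}(J)$. Finite generation of $I$ is automatic because $A$ is Noetherian. Since $J\subseteq\jac(B)$ is proper, $I$ is proper, so $I\subseteq\mathfrak m_A=\jac(A)$.

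The one point that needs care is that $I$ is regular in $A$. If it is not, $I$ lies in some minimal prime $\mathfrak p$ of $A$. I would rule this out using that $J$ is regular and that $f(A)+J$ is an algebroid curve, hence one-dimensional and finite over $A$. For any $j\in J$ we have $f(I)\supseteq f(A)\cap J$, and $J$ contains products $j\cdot f(a)$; the idea is that regularity of $J$ forces $f^{-1}(J)$ to avoid the minimal primes of $A/\ker f$, since $\height\ker f=0$ as in the proof of Theorem \ref{algebroidthm}. If regularity of $I$ cannot be established in full generality, I would note that it is an implicit standing hypothesis of the amalgamated setting, as in \cite{DFF3}. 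Once Step 1 is done, $A\bowtie^{f}J$ is an algebroid curve.

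\textbf{Step 2: apply Theorem \ref{algebroidgorthm}.} Since $A\bowtie^{f}J$ is an algebroid curve, Theorem \ref{algebroidgorthm} says it is Gorenstein if and only if $I$ is a canonical ideal of $A$ and $J$ is a canonical ideal of $f(A)+J$. This is obtained through Theorem \ref{gorensteintheorem}, whose hypotheses I check for the pair $(\iota,f)$:
\begin{itemize}
\item $\Ann_{f(A)+J}(J)=0$ holds because $J$ is regular and $f(A)+J$ is reduced.
\item $J\neq 0$.
\item All algebroid curves are Cohen--Macaulay, $(S_1)$ and equidimensional.
\end{itemize}

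\textbf{Step 3: remove the condition on $I$.} The statement only mentions $J$, so I must show the condition ``$I$ is canonical for $A$'' is redundant. The direction ``Gorenstein $\Rightarrow J$ canonical'' is immediate from Step 2.

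For the converse I would not simply feed the $I$-condition into Theorem \ref{gorensteintheorem}. Instead I would use the pullback description directly and the result of \cite[Theorem 4]{fiber}, with the first factor being $A$ itself.

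The alternative I would try first is the following. Apply Proposition \ref{gorprop} in the form of its corollary, with $f$ injective since $\iota$ is the identity. Then argue that when $J$ is canonical for $f(A)+J$, the ideal $f^{-1}(J)\cong\Hom_{f(A)+J}(f(A)+J,J)$ restricted to $A$ gives a canonical module of $A$. This rests on $f(A)+J$ being finite over $A$ together with the formula $\omega_A\cong\Hom(\,\cdot\,,\omega)$ from \cite[Theorem 21.15]{eisenbud}.

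I expect Step 3, the redundancy of the canonical condition on $f^{-1}(J)$, to be the main obstacle. If it fails in general, the correct form of the corollary would keep that condition, as in the amalgamated corollary following Theorem \ref{gorensteintheorem}.
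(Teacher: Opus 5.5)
\textbf{Step 3 is a genuine gap, and it cannot be closed.} Your Steps 1--2 follow the paper's route, which specializes Theorem \ref{algebroidgorthm} via $A\bowtie^{f}J\cong A\bowtie^{\iota,f}(f^{-1}(J),J)$. That route yields a \emph{two}-part criterion. The condition that $f^{-1}(J)$ be a canonical ideal of $A$ is not redundant. So the converse of the corollary as printed is false, and no argument can close your Step 3.

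\textbf{Counterexample.} Take $A=k\llbracket t^3,t^4,t^5\rrbracket$, $B=k\llbracket t\rrbracket$, $f$ the inclusion, and $J=tB$.
\begin{enumerate}[(i)]
\item Every standing hypothesis holds: $J$ is proper, regular, equal to $\jac(B)$, and finite over $A$.
\item $f(A)+J=B$ is a DVR, and $J\cong B$ is its canonical module.
\item $R=A\bowtie^{f}J=\{(a,b)\in A\times B: a(0)=b(0)\}$ is not Gorenstein. Suppose it were. Since $A\cong R/(0\times J)$ is Cohen--Macaulay of the same dimension, $\omega_A\cong\Hom_R(A,R)\cong\Ann_R(0\times J)=\mathfrak m_A\times 0$. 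Hence $\omega_A\cong\mathfrak m_A=f^{-1}(J)$.
\item This is impossible. $\mu(\omega_A)$ is the type of $A$, which is $2$, since the pseudo-Frobenius numbers of $\langle 3,4,5\rangle$ are $1,2$. But $\mu(\mathfrak m_A)=3$.
\end{enumerate}
In the language of Section 4: $v(R)=\{(0,0)\}\cup(\{3,4,\dots\}\times\{1,2,\dots\})$ and $\gamma=(2,0)$. Then $\alpha=(1,1)\notin v(R)$, yet $\Delta(\gamma-\alpha)=\emptyset$, so $v(R)$ is not symmetric.

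\textbf{Your proposed mechanisms for Step 3 also fail individually.}
\begin{enumerate}[(i)]
\item The corollary of Proposition \ref{gorprop} is the forward implication. It shows that the condition on $f^{-1}(J)$ is \emph{necessary}, not that it is redundant.
\item $\Hom_{f(A)+J}(f(A)+J,J)$ is $J$, not $f^{-1}(J)$.
\item \cite[Theorem 21.15]{eisenbud} transports canonical modules upward: $\omega_{f(A)+J}\cong\Hom_A(f(A)+J,\omega_A)$. Restricting $\omega_{f(A)+J}$ to $A$ therefore does not produce $\omega_A$; in the example it gives $\bar A$.
\end{enumerate}

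Your fallback is the correct outcome. What Steps 1--2 actually prove is this: $A\bowtie^{f}J$ is Gorenstein if and only if $f^{-1}(J)$ and $J$ are isomorphic to canonical ideals of $A$ and $f(A)+J$, respectively. This matches the amalgamation corollary of Theorem \ref{gorensteintheorem}. The extra condition holds in the paper's examples only because there $A=k\llbracket t\rrbracket$, where every nonzero ideal is canonical.

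\textbf{A small repair in Step 1.} Regularity of $f^{-1}(J)$ is not an extra hypothesis; it can be proved.
\begin{enumerate}[(i)]
\item Minimal primes of $A$ not containing $\ker f$ are avoided, because $\ker f\subseteq f^{-1}(J)$.
\item For the others, use lying over and incomparability in the finite extension $f(A)\subseteq f(A)+J$. A minimal prime of $f(A)$ containing $f(A)\cap J$ would be contracted from a minimal prime of the reduced ring $f(A)+J$ containing the regular ideal $J$, which is impossible.
\item Prime avoidance then gives that $f^{-1}(J)$ is regular.
\end{enumerate}
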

\begin{corollary}
        The algebroid curve $A\bowtie I$ is Gorenstein if and only if $I$ is isomorphic to the canonical ideal of $A$.
\end{corollary}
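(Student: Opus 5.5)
The statement to prove is the last corollary, specialized to duplication: the algebroid curve $A\bowtie I$ is Gorenstein if and only if $I$ is isomorphic to the canonical ideal of $A$. I would obtain it as a special case of Theorem \ref{algebroidgorthm}, or equivalently of Corollary \ref{algebroidgorcor}. The approach is to realize $A\bowtie I$ as the bi--amalgamation $A\bowtie^{\iota,\iota}(I,I)$ with $\iota=\mathrm{id}_A$, $B=C=A$ and $J=J'=I$. The standing hypotheses, as in the preceding corollary on duplicated algebroid curves, are that $A$ is an algebroid curve and $I$ is a proper regular ideal.

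The first step is to check that the hypotheses of Theorem \ref{algebroidthm} hold.
\begin{itemize}
\item The compatibility condition $\iota^{-1}(I)=\iota^{-1}(I)$ is trivial, and $I_0=I$.
\item $I$ is finitely generated over the Noetherian ring $A$.
\item $I\subseteq\mathfrak m_A=\jac(A)$ because $I$ is proper.
\item $\iota(A)+I=A$ is an algebroid curve.
\end{itemize}
Hence $A\bowtie I$ is an algebroid curve, and Theorem \ref{algebroidgorthm} applies to it.

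The second step is to verify the extra hypotheses of Theorem \ref{gorensteintheorem}, on which Theorem \ref{algebroidgorthm} rests.
\begin{itemize}
\item The condition $\Ann_{g(A)+J'}(J')=\Ann_A(I)=0$ holds because $I$ contains a nonzerodivisor.
\item $I\neq 0$, since a regular ideal in a one-dimensional ring cannot be zero.
\item $I_0=I$ is proper.
\item $A$ is Cohen--Macaulay, being one-dimensional and reduced. It therefore satisfies $(S_1)$ and is equidimensional, since all branches have dimension one.
\end{itemize}

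Theorem \ref{algebroidgorthm} then says that $A\bowtie I$ is Gorenstein if and only if $I$ is isomorphic to a canonical ideal of $f(A)+J=A$ and to one of $g(A)+J'=A$. These two conditions coincide, which gives exactly the claimed statement.

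I expect no real obstacle here. The only delicate point is the annihilator condition, i.e.\ Shapiro's correction, and it is exactly what regularity of $I$ guarantees; this is why properness and regularity of $I$ must be part of the standing hypotheses.
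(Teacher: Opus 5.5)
Your proposal is correct and follows the paper's own route: the paper obtains this corollary by specializing Theorem \ref{algebroidgorthm}, which rests on Theorem \ref{gorensteintheorem}, to $A\bowtie I\cong A\bowtie^{\iota,\iota}(I,I)$, just as you do. Your explicit check that a proper regular $I$ satisfies the hypotheses of Theorems \ref{algebroidthm} and \ref{gorensteintheorem} is what the paper leaves implicit, in particular $\Ann_A(I)=0$, $I\neq 0$ and $I_0=I$ proper.
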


\begin{example}
    Let us consider the bi--amalgamated algebroid curve $A\bowtie^{f,g}(J,J')$ with the data $A=k\left\llbracket t \right\rrbracket$ and
    \begin{gather*}
        B=k\left\llbracket t^4,t^7,t^9  \right\rrbracket,\qquad J=\left( t^4,t^9 \right),\qquad f:A\to B, f(t)=t^7\\
        C=k\left\llbracket t^5,t^8,t^{11}  \right\rrbracket, \qquad J'=\left( t^5,t^8 \right),\qquad g:A\to C, g(t)=t^{11}.
    \end{gather*}
    Then we have $f^{-1}(J)=\left(t^3\right)=g^{-1}(J')$ and $f(A)+J=B, g(A)+J'=C$. The value semigroups of $B$ and $C$ are respectively 
    \begin{align*}
        S_B=\langle4,7,9\rangle&=\{0,4,7,8,9,11,12,13,\ldots\}\\
        S_C=\langle5,8,11\rangle&=\{0,5,8,10,11,13,15,16,18,19,20,\ldots\}
    \end{align*}
    with Frobenius numbers $g_B=10, g_C=17$ and canonical ideals
    \begin{align*}
        K(S_B)&=\{0,4,5,7,8,9,11,\ldots\}\\
        K(S_C)&=\{0,3,5,8,10,11,13,14,15,16,18,\ldots\}.
    \end{align*}
    This shows that $I_B=\left(1,t^5\right)$ and $I_C=\left(1,t^3\right)$ are canonical fractional ideals of $B$ and $C$ respectively, as they satisfy $v(I_B)=K(S_B)$ and $v(I_C)=K(S_C)$. Therefore, $J$ and $J'$ are canonical ideals of $B$ and $C$ respectively, since $J=t^4 I_B$ and $J'=t^5 I_C$. The conditions of Theorem \ref{algebroidgorthm} are satisfied, and the algebroid curve $A\bowtie^{f,g}(J,J')$ is Gorenstein.

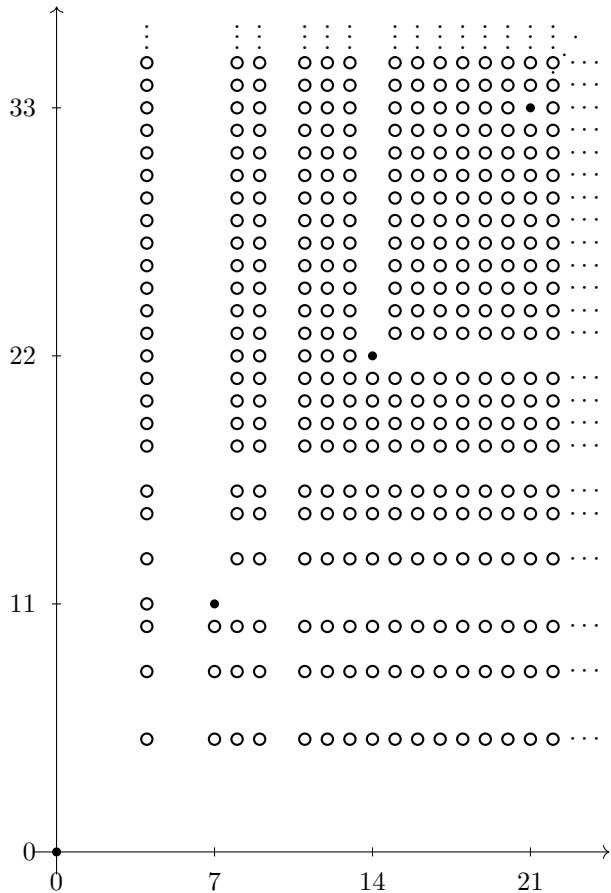
\begin{figure}[ht]
\centering
\begin{tikzpicture}[scale=0.3] 
    \draw[->] (-1,0) -- (24.5,0);
    \draw[->] (0,-1) -- (0,37.5);

    \foreach \x in {0, 7, 14, 21} {
        \draw (\x, -0.2) -- (\x, 0.2);
        \node[below] at (\x, -0.5) {\x};
    }
    \foreach \y in {0, 11, 22, 33} {
        \draw (-0.2, \y) -- (0.2, \y);
        \node[left] at (-0.5, \y) {\y};
    }

    \foreach \x/\y in {0/0, 7/11, 14/22, 21/33} {
        \fill[black] (\x, \y) circle (0.2);
    }

    \foreach \x in {4, 8, 9, 11, 12, 13, 15, 16, 17, 18, 19, 20, 21, 22} {
        \foreach \y in {5, 8, 10, 13, 15, 16, 18, 19, 20, 21, 23, 24, 25, 26, 27, 28, 29, 30, 31, 32, 33, 34, 35} {
            \def\isdiag{0}
            \ifnum\x=21 \ifnum\y=33 \def\isdiag{1} \fi \fi
            \ifnum\isdiag=0
                \draw[thick] (\x, \y) circle (0.25);
            \fi
        }
    }

    \foreach \y in {5, 8, 10} { \draw[thick] (7, \y) circle (0.25); }
    \foreach \y in {5, 8, 10, 13, 15, 16, 18, 19, 20, 21} { \draw[thick] (14, \y) circle (0.25); }

    \foreach \x in {4} { \draw[thick] (\x, 11) circle (0.25); }
    \foreach \x in {4, 8, 9, 11, 12, 13} { \draw[thick] (\x, 22) circle (0.25); }

    \fill (22, 34.57) circle (0.06);
    \fill (22.5, 35.35) circle (0.06);
    \fill (23, 36.14) circle (0.06);

    \foreach \y in {5, 8, 10, 13, 15, 16, 18, 19, 20, 21, 23, 24, 25, 26, 27, 28, 29, 30, 31, 32, 33, 34, 35} {
        \node at (23.5, \y) {$\cdots$};
    }
    \foreach \x in {4, 8, 9, 11, 12, 13, 15, 16, 17, 18, 19, 20, 21, 22} {
        \node at (\x, 36.5) {$\vdots$};
    }

\end{tikzpicture}
\caption{The value semigroup of the bi--amalgamated algebroid curve $A \bowtie^{f,g} (J, J')$.}
\end{figure}
    
\end{example}

\begin{example}
    Let us consider the amalgamated algebroid curve $A\bowtie^f J$ with the data $A=k\left\llbracket t \right\rrbracket$ and 
    \begin{align*}
        B=k\left\llbracket t^3,t^7,t^8\right\rrbracket, \qquad J=\left(t^7,t^8\right), \qquad f:A\to B, f(t)=t^3.
    \end{align*}
    Then we have $f(A)+J=B$. The value semigroup of $B$ is given by
    \begin{align*}
        S=\langle3,7,8\rangle=\{0,3,6,7,8,9,10,\ldots\}
    \end{align*}
    with the Frobenius number $g=5$ and the canonical ideal
    \begin{align*}
        K(S)=\{0,1,3,4,6,7,\ldots\}.
    \end{align*}
    This shows that $I=(1,t)$ is a canonical fractional ideal of $B$ as $v(I)=K(S)$. Therefore, $J$ is a canonical ideal of $B$ since $J=t^7 I$. The conditions of Corollary \ref{algebroidgorcor} are satisfied, and the algebroid curve $A\bowtie^f J$ is Gorenstein.
    
\end{example}

\begin{figure}[ht]
\centering
\begin{tikzpicture}[scale=0.35] 
    \draw[->] (-1,0) -- (24.5,0);
    \draw[->] (0,-1) -- (0,24.5);

    \foreach \x in {0, 3, 6, 9, 12, 15, 18, 21} {
        \draw (\x, -0.2) -- (\x, 0.2);
        \node[below] at (\x, -0.5) {\x};
        \draw (-0.2, \x) -- (0.2, \x);
        \node[left] at (-0.5, \x) {\x};
    }

    \foreach \x in {0, 3, 6, 9, 12, 15, 18, 21} {
        \fill[black] (\x, \x) circle (0.2);
    }

    
    \foreach \y in {7, 8, 10, 11} {
        \draw[thick] (12, \y) circle (0.25);
    }

    \foreach \x in {15, 18, 21} {
        \foreach \y in {7, 8, 10, 11, 13, 14, 15, 16, 17, 18, 19, 20, 21, 22, 23} {
            \ifnum\x=\y 
            \else
                \draw[thick] (\x, \y) circle (0.25);
            \fi
        }
    }

    \fill (22.2, 22.2) circle (0.06);
    \fill (22.6, 22.6) circle (0.06);
    \fill (23.0, 23.0) circle (0.06);

    \foreach \x in {15, 18, 21} {
        \node at (\x, 24) {$\vdots$};
    }
    
    \foreach \y in {7, 8, 10, 11, 13, 14, 15, 16, 17, 18, 19, 20, 21, 22, 23} {
        \node at (23.5, \y) {$\cdots$};
    }
\end{tikzpicture}
\caption{The value semigroup of the amalgamated algebroid curve $A \bowtie^f J$.}
\end{figure}
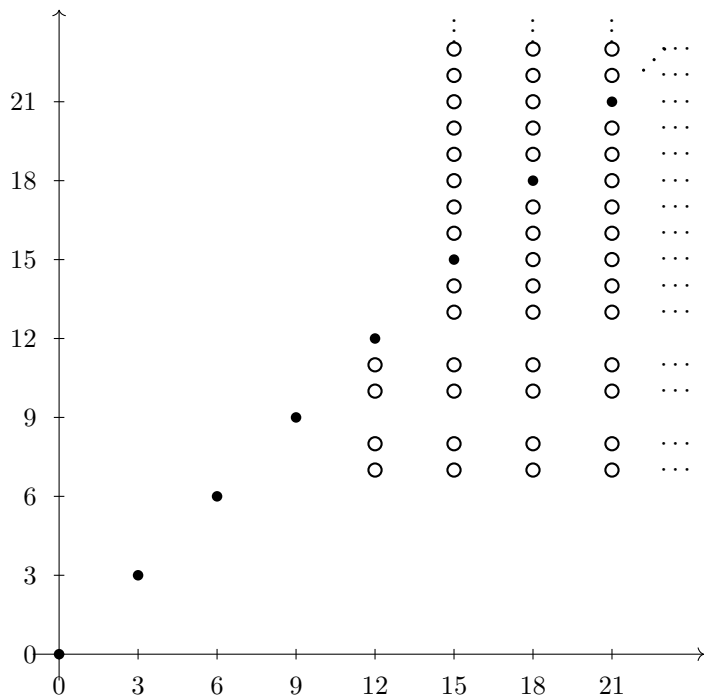

\section{A remark on the universally catenary property}

Bi--amalgamations naturally generalize some classic pull--back constructions of polynomial and power series rings. In the previous section, we have also seen an application of the bi--amalgamated algebra construction to curve singularities. In this section, we describe the bi--amalgamated algebra as a homomorphic image of a polynomial ring. The results here generalize those of \cite{iranlilar}.

\begin{proposition}\label{homimageprop}
    Assume that $J$ and $J'$ are generated by $\{j_\mu\}_{\mu\in M}$ and $\{j'_\nu\}_{\nu\in N}$ respectively as $A$--modules. Then $A\bowtie^{f,g}(J,J')$ is a homomorphic image of $A[\{x_\mu\}_{\mu\in M}, \{x'_\nu\}_{\nu\in N}]$, where $x_\mu$ and $x'_\nu$ are indeterminates.
\end{proposition}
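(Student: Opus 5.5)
We define an $A$--algebra homomorphism
\[
\Phi: A[\{x_\mu\}_{\mu\in M},\{x'_\nu\}_{\nu\in N}]\to A\bowtie^{f,g}(J,J')
\]
by $\Phi(a)=(f(a),g(a))$ for $a\in A$, $\Phi(x_\mu)=(j_\mu,0)$ and $\Phi(x'_\nu)=(0,j'_\nu)$. We then show that $\Phi$ is surjective. Since the polynomial ring is free on the indeterminates as a commutative $A$--algebra, $\Phi$ exists and is unique once it is checked to be well defined on the generators. So the first step is to verify that the prescribed images lie in $A\bowtie^{f,g}(J,J')$. The element $(f(a),g(a))$ corresponds to $j=j'=0$. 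For $(j_\mu,0)$, take $a=0$, $j=j_\mu\in J$ and $j'=0$; the case of $(0,j'_\nu)$ is symmetric. The structure map $a\mapsto(f(a),g(a))$ is a ring homomorphism into the subring $A\bowtie^{f,g}(J,J')$ of $B\times C$, so $\Phi$ is a ring homomorphism.

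For surjectivity, take an arbitrary element $(f(a)+j,\,g(a)+j')$ with $j\in J$ and $j'\in J'$. Here "generated as $A$--modules" means through the $A$--module structure induced by $f$ on $J$ and by $g$ on $J'$. So we can write $j=\sum_\mu f(a_\mu)j_\mu$ and $j'=\sum_\nu g(b_\nu)j'_\nu$ as finite sums with $a_\mu,b_\nu\in A$. Now consider the polynomial $F=a+\sum_\mu a_\mu x_\mu+\sum_\nu b_\nu x'_\nu$. It satisfies
\[
\Phi(F)=(f(a),g(a))+\sum_\mu (f(a_\mu),g(a_\mu))(j_\mu,0)+\sum_\nu (f(b_\nu),g(b_\nu))(0,j'_\nu)=(f(a)+j,\,g(a)+j'),
\]
because the products split coordinatewise and the zero coordinates kill the unwanted terms. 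Hence $\Phi$ is onto, and $A\bowtie^{f,g}(J,J')\cong A[\{x_\mu\},\{x'_\nu\}]/\ker\Phi$.

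There is no serious obstacle here. The only point needing care is the interpretation of the generating sets: the $A$--module structures on $J$ and $J'$ come via $f$ and $g$ respectively, and the linear polynomials suffice, so no higher-degree monomials are needed. One could additionally describe $\ker\Phi$, for instance by noting that it contains the products $x_\mu x'_\nu$, as in \cite{iranlilar}. That is not required for the statement, so I would stop at surjectivity.
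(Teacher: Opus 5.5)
Your proposal is correct and follows the paper's proof: the paper defines the same map $\varphi$, with $\varphi(a)=(f(a),g(a))$, $\varphi(x_\mu)=(j_\mu,0)$ and $\varphi(x'_\nu)=(0,j'_\nu)$, and simply declares surjectivity obvious. Your linear polynomial $F=a+\sum_\mu a_\mu x_\mu+\sum_\nu b_\nu x'_\nu$ supplies the explicit surjectivity check that the paper omits.
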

\begin{proof}
    Define a homomorphism $\varphi:A[\{x_\mu\}_{\mu\in M}, \{x'_\nu\}_{\nu\in N}]\to A\bowtie^{f,g}(J,J')$ by $\varphi(a)=(f(a),g(a))$, $\varphi(x_\mu)=(j_\mu,0)$ and $\varphi(x'_\nu)=(0,j'_\nu)$. It is obvious that $\varphi$ is surjective and thus $A\bowtie^{f,g}(J,J')$ is a homomorphic image of $A[\{x_\mu\}_{\mu\in M}, \{x'_\nu\}_{\nu\in N}]$.
\end{proof}
\begin{corollary}
    Assume that $J$ is generated by $\{j_\mu\}_{\mu\in M}$ as an $A$--module. Then $A\bowtie^{f}J$ is a homomorphic image of $A[\{x_\mu\}_{\mu\in M}]$, where $x_\mu$ are indeterminates.
\end{corollary}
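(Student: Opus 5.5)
This corollary is the special case of Proposition \ref{homimageprop} in which the amalgamation is realized as a bi--amalgamation. Recall from the introduction that
\[
A\bowtie^{f}J\cong A\bowtie^{\iota,f}(I,J), \qquad I=f^{-1}(J),\ \iota=\operatorname{id}_A .
\]
Applying Proposition \ref{homimageprop} directly, however, would introduce extra indeterminates for generators of $I$. I would therefore give a direct argument, mimicking the proof of Proposition \ref{homimageprop}.

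Define the $A$--algebra homomorphism
\[
\varphi:A[\{x_\mu\}_{\mu\in M}]\to A\bowtie^{f}J
\]
by $\varphi(a)=(a,f(a))$ for $a\in A$ and $\varphi(x_\mu)=(0,j_\mu)$. This is legitimate because $(0,j_\mu)\in A\bowtie^f J$: take $a=0$ and $j=j_\mu$. By the universal property of polynomial rings, $\varphi$ extends uniquely to a ring homomorphism.

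It remains to check surjectivity. Let $(a,f(a)+j)$ be an element of $A\bowtie^f J$. Since $J$ is generated as an $A$--module by the $j_\mu$, we can write $j=\sum_\mu f(a_\mu)j_\mu$ as a finite sum with $a_\mu\in A$. Then
\[
\varphi\Bigl(a+\sum_\mu a_\mu x_\mu\Bigr)=(a,f(a))+\sum_\mu (a_\mu,f(a_\mu))(0,j_\mu)=(a,f(a)+j).
\]
Hence $\varphi$ is surjective, and $A\bowtie^f J$ is a homomorphic image of $A[\{x_\mu\}_{\mu\in M}]$.

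There is no real obstacle here. The only point to note is that the first coordinate needs no extra variables, since it is hit by $A$ itself. Equivalently, in the bi--amalgamation picture, the variables attached to generators of $I$ are redundant: such a generator $i$ maps to $(i,0)=(i,f(i))-(0,f(i))$, and $f(i)\in J$, so $(i,0)$ already lies in the image of $A$ and the $x_\mu$.
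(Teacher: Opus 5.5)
Your proof is correct and is essentially the paper's argument for Proposition \ref{homimageprop} specialized to amalgamations: the same map $a\mapsto(a,f(a))$, $x_\mu\mapsto(0,j_\mu)$, and the same surjectivity check via $j=\sum_\mu f(a_\mu)j_\mu$. You also rightly point out a step the paper leaves implicit. Specializing literally through $A\bowtie^{\iota,f}(f^{-1}(J),J)$ would add indeterminates for generators of $f^{-1}(J)$, and these are redundant because the restriction of the map to $A[\{x_\mu\}_{\mu\in M}]$ is already surjective.
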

\begin{corollary}
    Assume that $M$ is generated by $\{m_\mu\}_{\mu\in M}$ as an $A$--module. Then $A\ltimes M$ is a homomorphic image of $A[\{x_\mu\}_{\mu\in M}]$, where $x_\mu$ are indeterminates.
\end{corollary}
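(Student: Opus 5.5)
This corollary is the special case of Proposition \ref{homimageprop} for the idealization. We realize $A\ltimes M$ as a bi--amalgamation and then read off the surjection from the polynomial ring. Concretely, we can avoid the bi--amalgamation formalism altogether and write the map down directly. To avoid a clash of notation, index the generators by a set $\Lambda$, say $\{m_\mu\}_{\mu\in\Lambda}$.

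\textbf{Step 1: define the map.} Define an $A$--algebra homomorphism
\[
\varphi\colon A[\{x_\mu\}_{\mu\in\Lambda}]\to A\ltimes M
\]
by $\varphi(a)=(a,0)$ for $a\in A$ and $\varphi(x_\mu)=(0,m_\mu)$. By the universal property of polynomial rings, $\varphi$ is a well-defined ring homomorphism, because $A\ltimes M$ is a commutative $A$--algebra via $a\mapsto(a,0)$. In $A\ltimes M$ we have $(0,m)(0,m')=(0,0)$. Hence every monomial of degree at least $2$ maps to $0$, and
\[
\varphi\Bigl(a_0+\sum_\mu a_\mu x_\mu+(\text{higher terms})\Bigr)=\Bigl(a_0,\sum_\mu a_\mu m_\mu\Bigr).
\]

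\textbf{Step 2: surjectivity.} Take an arbitrary element $(a,m)\in A\ltimes M$. Write $m=\sum_\mu a_\mu m_\mu$ as a finite sum, which is possible since the $m_\mu$ generate $M$. Then $(a,m)=\varphi(a+\sum_\mu a_\mu x_\mu)$, so $\varphi$ is onto.

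\textbf{Alternative route.} One can instead deduce the statement from Proposition \ref{homimageprop}, using that $A\ltimes M\cong A\bowtie^{\iota,\iota}(0,M)$ inside $A\times(A\ltimes M)$ with $M$ viewed as the ideal $0\ltimes M$. The only thing that needs care is this identification, so the direct construction above is preferable.

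There is no real obstacle in either route. The one point to watch is that products of the images of the indeterminates vanish, and this is exactly what makes linear polynomials already enough to hit every element.
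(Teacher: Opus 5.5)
Your proof is correct and is essentially the paper's argument: the corollary is Proposition \ref{homimageprop} specialized via $A\ltimes M\cong A\bowtie^{\mathrm{id}_A,f}(0,0\ltimes M)$ with $f\colon a\mapsto(a,0)$, and under this identification the paper's map is exactly your $a\mapsto(a,0)$, $x_\mu\mapsto(0,m_\mu)$, with surjectivity read off from linear polynomials. Two minor remarks: in your alternative route the second structure map must be this $f$ rather than $\iota=\mathrm{id}_A$, and surjectivity does not actually use $(0,m)(0,m')=0$ (in a general bi--amalgamation the products $(j_\mu,0)(j_\nu,0)$ need not vanish, yet linear polynomials still suffice).
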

\begin{corollary}
    Assume that $I$ is generated by $\{i_\mu\}_{\mu\in M}$ as an ideal of $A$. Then $A\bowtie I$ is a homomorphic image of $A[\{x_\mu\}_{\mu\in M}]$, where $x_\mu$ are indeterminates.
\end{corollary}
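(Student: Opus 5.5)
The statement to prove is the last corollary: if $I$ is generated by $\{i_\mu\}_{\mu\in M}$ as an ideal of $A$, then $A\bowtie I$ is a homomorphic image of $A[\{x_\mu\}_{\mu\in M}]$. I would deduce it from Proposition \ref{homimageprop} through the identification $A\bowtie I\cong A\bowtie^{\iota,\iota}(I,I)$ recalled in the introduction, with $\iota=\mathrm{id}_A$. A literal application of Proposition \ref{homimageprop} gives a surjection from $A[\{x_\mu\},\{x'_\mu\}]$, which has two families of indeterminates. So I need to remove one family, and this is the only point requiring care.

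\emph{Construction.} Define $\varphi:A[\{x_\mu\}_{\mu\in M}]\to A\bowtie I$ by $\varphi(a)=(a,a)$ and $\varphi(x_\mu)=(0,i_\mu)$. Since $A\bowtie I=\{(a,a+i): a\in A,\ i\in I\}$ is a subring of $A\times A$ and $(0,i_\mu)$ lies in it, the universal property of polynomial rings makes $\varphi$ a well-defined ring homomorphism.

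\emph{Surjectivity.} Take $(a,a+i)\in A\bowtie I$ and write $i=\sum_\mu r_\mu i_\mu$, a finite sum with $r_\mu\in A$. Then
\[
\varphi\Bigl(a+\sum_\mu r_\mu x_\mu\Bigr)=(a,a)+\sum_\mu (r_\mu,r_\mu)(0,i_\mu)=(a,a+i),
\]
so $\varphi$ is surjective.

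\emph{Why one family suffices.} In $A\bowtie I$ the first coordinate is determined by $a$, so only one copy of the generators is needed. The second copy in Proposition \ref{homimageprop}, namely $x_\mu\mapsto(i_\mu,0)$, is redundant: $(i_\mu,0)=(i_\mu,i_\mu)-(0,i_\mu)$, which is the image of $i_\mu - x_\mu$.

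Equivalently, one can argue from Proposition \ref{homimageprop} itself. Its surjection from $A[\{x_\mu\},\{x'_\mu\}]$ factors through the substitution $x_\mu\mapsto i_\mu-x'_\mu$, which gives the desired surjection from a polynomial ring in a single family of indeterminates.
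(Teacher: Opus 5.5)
Your proof is correct and takes essentially the paper's approach: the corollary is the specialization of the surjection in Proposition \ref{homimageprop}, sending $A$ diagonally and each indeterminate to a generator in one coordinate, and your explicit map $\varphi(x_\mu)=(0,i_\mu)$ with its surjectivity check is exactly that. Your observation that the second family is redundant, since $(i_\mu,0)=\varphi(i_\mu-x_\mu)$, spells out the reduction from two families of indeterminates to one, which the paper leaves implicit.
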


As a consequence of the above proposition, we reobtain a special case of the Noetherian characterization of $A\bowtie^{f,g}(J,J')$. Indeed, we get that whenever $J$ and $J'$ are finitely generated $A$--modules, $A\bowtie^{f,g}(J,J')$ is Noetherian if and only if $A$ is Noetherian. This agrees with the results already found in the literature (see \cite{KLT}) and gives an easier proof in the finitely generated case.\medskip

We say that a Noetherian ring $A$ is catenary if all saturated chains of prime ideals between any two prime ideals have the same length. If all finitely generated $A$--algebras are catenary, we call $A$ universally catenary. From the above description, we have the following characterization of the universally catenary property for the bi--amalgamated algebra.

\begin{proposition}\label{catenaryprop}
    Assume $J$ and $J'$ are finitely generated as $A$--modules. Then $A\bowtie^{f,g}(J,J')$ is universally catenary if and only if $A/I_0$ is universally catenary.
\end{proposition}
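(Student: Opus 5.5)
The plan is to use the two basic facts about universally catenary rings: homomorphic images of universally catenary rings are universally catenary, and if $R \to S$ is a finite (hence integral) ring extension with $R$ universally catenary, then every finitely generated $S$-algebra is also a finitely generated $R$-algebra, so $S$ is universally catenary. Both directions will come from exhibiting $A\bowtie^{f,g}(J,J')$ and $A/I_0$ as a homomorphic image of, or a finite algebra over, the other.

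For the direction "$A\bowtie^{f,g}(J,J')$ universally catenary $\Rightarrow$ $A/I_0$ universally catenary", I would invoke Proposition \ref{Prop.introd.}(1) with $I=0$. This gives
\[
A\bowtie^{f,g}(J,J')/(0\bowtie^{f,g}(J,J'))\cong A/I_0 .
\]
Here $0\bowtie^{f,g}(J,J')=J\times J'$, which is the kernel of the map $\pi$ in (\ref{exactseq}). So $A/I_0$ is a homomorphic image of $A\bowtie^{f,g}(J,J')$. Any finitely generated $A/I_0$-algebra is then a finitely generated $A\bowtie^{f,g}(J,J')$-algebra, hence catenary. Noetherianity also transfers, since it is a prerequisite in the paper's definition.

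For the converse, the naive route through Proposition \ref{homimageprop} would present $A\bowtie^{f,g}(J,J')$ as a quotient of a polynomial ring over $A$, but $A$ itself need not be universally catenary. I would therefore work over $A/(\ker f\cap\ker g)$ instead. Via $a\mapsto (f(a),g(a))$, the ring $A/(\ker f\cap \ker g)$ embeds into $A\bowtie^{f,g}(J,J')$. Because $J$ and $J'$ are finitely generated $A$-modules, the extension is finite, as in the proof of Theorem \ref{dimdepthcalclemma}(1): it is generated as a module by $(1,1)$ together with the generators $(j_\mu,0)$ and $(0,j'_\nu)$. Thus, as in Proposition \ref{homimageprop}, $A\bowtie^{f,g}(J,J')$ is a homomorphic image of a polynomial ring in finitely many variables over $A/(\ker f\cap\ker g)$, modulo the integral relations.

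The main obstacle is the gap between $A/I_0$ and $A/(\ker f\cap\ker g)$. One always has $\ker f\cap\ker g\subseteq I_0$, with equality only in special cases such as the splitting situation of the corollary after the splitting lemma. To bridge it, I would compare prime spectra using Proposition \ref{Prop.introd.}(2). Primes containing $J\times J'$ correspond to $V(I_0)\subseteq\spec(A)$. The remaining primes are localizations $(f(A)+J)_L$ or $(g(A)+J')_{L'}$ with $L\not\supseteq J$ or $L'\not\supseteq J'$, and on $D(J)$ these coincide with localizations of $f(A)$, respectively $g(A)$.

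My first attempt would be to argue via Ratliff's criterion, which says a Noetherian local domain is universally catenary if and only if it is formally catenary. Alternatively, I would use the local criterion that $R$ is universally catenary if and only if $R[x]$ is catenary. Either reduces to checking catenarity of polynomial extensions, prime by prime, over the two kinds of primes. If this comparison cannot be carried out purely with $A/I_0$, I expect the statement is really meant under $I_0=\ker f\cap\ker g$, in the spirit of Theorem \ref{dimdepthcalclemma}(2). In that case the finite-extension argument above closes the proof immediately.
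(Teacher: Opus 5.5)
Your forward direction is correct: $A/I_0\cong A\bowtie^{f,g}(J,J')/(J\times J')$ is a homomorphic image, and universal catenarity passes to quotients. Your finite-extension argument is also correct and gives a genuine sufficient condition. $A\bowtie^{f,g}(J,J')$ is module-finite over the image of $A/(\ker f\cap\ker g)$, generated by $(1,1)$, $(j_\mu,0)$ and $(0,j'_\nu)$. So it is universally catenary whenever $A/(\ker f\cap\ker g)$ is, and the equivalence does hold when $I_0=\ker f\cap\ker g$.

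The gap is the converse as stated. Neither a prime-by-prime comparison nor Ratliff's criterion can close it, because it is false. Let $(A,\mathfrak m)$ be a Noetherian local ring that is not universally catenary (Nagata's examples), and take the duplication $A\bowtie\mathfrak m=A\bowtie^{\iota,\iota}(\mathfrak m,\mathfrak m)$. Then $J=J'=\mathfrak m$ are finitely generated, $I_0=\mathfrak m$, and $A/I_0$ is a field, hence universally catenary. But $A\bowtie\mathfrak m$ maps onto $A$ by Proposition \ref{Prop.introd.}(1), so it is not universally catenary. The same example shows that the paper's (correct) corollary for $A\bowtie I$ is not a specialization of this proposition, since there $I_0=I$.

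The paper itself offers only ``from the above description'', i.e.\ Proposition \ref{homimageprop}. A presentation as a quotient of a polynomial ring over $A$ yields only ``$A$ universally catenary $\Rightarrow$ $A\bowtie^{f,g}(J,J')$ universally catenary'', so it does not bridge $A/I_0$ either. Your hedged last sentence should therefore become a definite conclusion: the ``if'' direction needs an extra hypothesis, such as $I_0=\ker f\cap\ker g$, or $A/(\ker f\cap\ker g)$ in place of $A/I_0$.

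If you want a two-sided statement that is actually true, use the pull-back structure: $A\bowtie^{f,g}(J,J')$ is universally catenary if and only if $f(A)+J$ and $g(A)+J'$ are. One direction is passage to the quotients in Proposition \ref{Prop.introd.}(1). For the other, the argument runs as follows.
\begin{enumerate}
\item $A\bowtie^{f,g}(J,J')$ is Noetherian by Eakin--Nagata, because $(f(A)+J)\times(g(A)+J')$ is generated over it by $(1,0)$ and $(0,1)$.
\item Every prime is of the form $\overline L$ or $\overline{L'}$, and the corresponding quotient is isomorphic to $(f(A)+J)/L$, respectively $(g(A)+J')/L'$.
\item Take a finitely generated algebra $S$ over $A\bowtie^{f,g}(J,J')$ and a minimal prime $\mathfrak P$ of $S$. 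Then $S/\mathfrak P$ is a finitely generated algebra over one of these universally catenary quotients, so it is catenary.
\item Hence $S$ is catenary.
\end{enumerate}
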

In particular, we get the following characterization for amalgamated algebras as given in \cite{iranlilar}.
\begin{corollary}
        Assume $J$ is a finitely generated $A$--module. Then $A\bowtie^{f}J$ is universally catenary if and only if $A$ is universally catenary.
\end{corollary}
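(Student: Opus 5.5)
The plan is to argue directly from the two natural maps relating $A$ and $A\bowtie^{f}J$, rather than specializing Proposition \ref{catenaryprop}. Under the identification $A\bowtie^{f}J\cong A\bowtie^{\iota,f}(f^{-1}(J),J)$ that proposition would only give a statement about $A/f^{-1}(J)$, not about $A$ itself. The two maps are:
\begin{itemize}
\item the projection $p\colon A\bowtie^{f}J\to A$, $(a,f(a)+j)\mapsto a$, which is surjective with kernel $0\times J$ (Proposition \ref{Prop.introd.}(1));
\item the ring homomorphism $\iota\colon A\to A\bowtie^{f}J$, $a\mapsto(a,f(a))$, which is injective since the first coordinate recovers $a$.
\end{itemize}
Throughout I use that, because $J$ is a finitely generated $A$--module, $A\bowtie^{f}J$ is Noetherian if and only if $A$ is (the remark following Proposition \ref{homimageprop}). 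So both sides of the equivalence concern Noetherian rings, and the definition of universal catenarity applies to each.

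($\Rightarrow$) Suppose $A\bowtie^{f}J$ is universally catenary. Via $p$, the ring $A$ is a quotient of $A\bowtie^{f}J$. Any finitely generated $A$--algebra $R$ is then also a finitely generated $A\bowtie^{f}J$--algebra, through the composite $A\bowtie^{f}J\to A\to R$ and using the same generators. Hence $R$ is catenary by hypothesis, and $A$ is universally catenary.

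($\Leftarrow$) Suppose $A$ is universally catenary. By the corollary to Proposition \ref{homimageprop}, with finitely many generators $j_1,\dots,j_r$ of $J$, the map $\iota$ extends to a surjection $A[x_1,\dots,x_r]\to A\bowtie^{f}J$ sending $x_\mu\mapsto(0,j_\mu)$. Thus $A\bowtie^{f}J$ is a finitely generated $A$--algebra. Let $R$ be a finitely generated $A\bowtie^{f}J$--algebra. Adjoining the finitely many generators of $A\bowtie^{f}J$ over $A$ to those of $R$ over $A\bowtie^{f}J$ shows that $R$ is a finitely generated $A$--algebra. Therefore $R$ is catenary, and $A\bowtie^{f}J$ is universally catenary.

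The only point requiring care, and the step I regard as the main obstacle, is bookkeeping rather than substance. One must check that the algebra structures are compatible: in the forward direction $R$ must genuinely be an algebra over $A\bowtie^{f}J$ via $p$, and in the converse the transitivity of finite generation must hold. One must also confirm that the generating set of $J$ as an $A$--module, with $A$ acting through $f$, really produces $A\bowtie^{f}J$ as the image of the polynomial map. This holds because each element can be written $(a,f(a)+\sum f(a_\mu)j_\mu)=\iota(a)+\sum\iota(a_\mu)(0,j_\mu)$.
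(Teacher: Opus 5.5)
Your proof is correct, but it does not follow the paper's route. The paper gets this corollary ``in particular'' from Proposition \ref{catenaryprop}, which it states without proof as a consequence of Proposition \ref{homimageprop}. Your suspicion of that route is justified. Under $A\bowtie^{f}J\cong A\bowtie^{\iota,f}(f^{-1}(J),J)$ one has $I_0=f^{-1}(J)$, so the specialization speaks about $A/f^{-1}(J)$, not $A$. In fact the ``if'' direction of Proposition \ref{catenaryprop} is false. Take a Noetherian local domain $(A,\mathfrak{m})$ that is not universally catenary (such rings exist by Nagata), and its duplication $A\bowtie\mathfrak{m}=A\bowtie^{\iota,\iota}(\mathfrak{m},\mathfrak{m})$. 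There $A/I_0=A/\mathfrak{m}$ is a field, yet $A\bowtie\mathfrak{m}$ surjects onto $A$ via your map $p$, so it is not universally catenary. The underlying reason is that the structure map $A\to A\bowtie^{f,g}(J,J')$, $a\mapsto(f(a),g(a))$, has kernel $\ker f\cap\ker g$ rather than $I_0$. So Proposition \ref{homimageprop} only gives ascent from $A/(\ker f\cap\ker g)$, while the surjections out of $A\bowtie^{f,g}(J,J')$ only pass universal catenarity down to $A/I_0$, $f(A)+J$ and $g(A)+J'$. For amalgamations these two ends meet: $\ker\iota\cap\ker f=0$, and the first-coordinate projection $p$ surjects onto $A$ itself. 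Your argument uses exactly these two facts. The paper's approach, were it valid, would give a uniform statement for all bi-amalgamations. Yours is specific to the amalgamated case, since it needs one structure map to be $\mathrm{id}_A$, but it is the one that actually proves the corollary. A minor point: you need not cite the paper's remark on Noetherianity. Your two maps already give it: $A$ is a quotient of a Noetherian ring in one direction, and Hilbert's basis theorem applies in the other.
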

\begin{corollary}
        Assume $M$ is a finitely generated $A$--module. Then $A\ltimes M$ is universally catenary if and only if $A$ is universally catenary.
\end{corollary}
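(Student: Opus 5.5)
The statement to prove is the last corollary: for a finitely generated $A$--module $M$, the idealization $A\ltimes M$ is universally catenary if and only if $A$ is universally catenary. I would deduce it from Proposition \ref{catenaryprop} by realizing $A\ltimes M$ as a bi--amalgamation whose associated ideal $I_0$ is zero.

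The first step is to fix the bi--amalgamation data. Take $B=C=A\ltimes M$, with $f=g$ the canonical embedding $a\mapsto(a,0)$, and $J=J'=0\ltimes M$. Then $f^{-1}(J)=g^{-1}(J')=0$, so $I_0=0$.

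Because $f=g$ and $J=J'$, the ring $A\bowtie^{f,g}(J,J')$ is the amalgamated duplication of $A\ltimes M$ along $0\ltimes M$, with elements of the form $(a,m),(a,m')$. This is not literally $A\ltimes M$, so I would avoid this realization and instead use the paper's identification $A\bowtie^{f}J\cong A\bowtie^{\iota,f}(I,J)$ with $I=f^{-1}(J)$. With the data above, $I=0$, so $A\bowtie^{f}J=\{(a,(a,m))\}\cong A\ltimes M$ via the second coordinate. This gives $A\ltimes M\cong A\bowtie^{\iota,f}(0,0\ltimes M)$ with $I_0=0$.

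The second step is to check finite generation. $J'=0\ltimes M\cong M$ is a finitely generated $A$--module, and $J=0$ trivially is. Proposition \ref{catenaryprop} then gives that $A\ltimes M$ is universally catenary if and only if $A/0=A$ is.

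The main obstacle is this first identification, namely checking that the map $(a,(a,m))\mapsto(a,m)$ is a ring isomorphism onto $A\ltimes M$. It is clearly bijective, and it is multiplicative because multiplication in the product is componentwise.

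As a fallback that avoids the bi--amalgamation identification, I would argue directly. $A$ is the quotient of $A\ltimes M$ by the nilpotent ideal $0\ltimes M$, and $A\ltimes M$ is a finitely generated $A$--algebra. So if $A$ is universally catenary, then $A\ltimes M$ is as well. Conversely, the universally catenary property passes to quotients, so if $A\ltimes M$ is universally catenary, then so is $A$.
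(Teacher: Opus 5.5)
Your proposal is correct, and its main route is the paper's: the corollary is Proposition \ref{catenaryprop} specialized to $A\ltimes M\cong A\bowtie^{\iota,f}(0,0\ltimes M)$, with $f\colon A\to A\ltimes M$ the canonical embedding, so that $I_0=0$ and both ideals are finitely generated $A$--modules. Keep your fallback argument as the actual justification. It says that $A\ltimes M$ is a finitely generated $A$--algebra and that $A\cong(A\ltimes M)/(0\ltimes M)$ is a quotient of it, and this is exactly why the proposition holds here; the proposition's general formulation via $A/I_0$ is only reliable when $I_0$ equals $\ker f\cap\ker g$ in the general notation (it fails, for instance, for $A\bowtie\mathfrak{m}$ over a Noetherian local ring $A$ that is not universally catenary), a condition your realization satisfies.
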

\begin{corollary}
        Assume $I$ is an ideal of $A$. Then $A\bowtie I$ is universally catenary if and only if $A$ is universally catenary.
\end{corollary}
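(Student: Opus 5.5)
The statement immediately above is the last corollary: for an ideal $I$ of $A$, $A\bowtie I$ is universally catenary if and only if $A$ is. I would deduce it from Proposition \ref{catenaryprop} by specializing to the duplication. As recalled in the introduction, $A\bowtie I\cong A\bowtie^{\iota,\iota}(I,I)$ with $\iota=\operatorname{id}_A$. So the plan is to check the hypotheses of Proposition \ref{catenaryprop} in this setting and then identify $A/I_0$ with $A$.

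\textbf{Step 1: hypotheses.} The proposition requires $J=J'=I$ to be finitely generated $A$--modules. Here the $A$--module structure on $I$ is induced by $\iota$, so it is the usual one. Universal catenarity is only defined for Noetherian rings, so $A$ is implicitly Noetherian in the corollary, and therefore $I$ is finitely generated.

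\textbf{Step 2: the quotient ring.} The notation $I_0$ needs care, since literally $I_0=\iota^{-1}(I)=I$, which would give the statement for $A/I$. The intended object is the ring over which the construction is an extension. This is exactly the ring $A/(\ker f\cap\ker g)$ used in the proof of Theorem \ref{dimdepthcalclemma}, and here $\ker\iota\cap\ker\iota=0$. So I would reinterpret the proof of Proposition \ref{catenaryprop} accordingly.

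\textbf{Step 3: the actual argument.} The embedding $a\mapsto(a,a)$ makes $A\bowtie I$ a finite extension of $A$, since it is generated as an $A$--module by $1$ and $(0,i_k)$ for generators $i_k$ of $I$. Proposition \ref{homimageprop} shows that $A\bowtie I$ is a finitely generated $A$--algebra.
\begin{enumerate}
\item For the forward direction: if $A$ is universally catenary, every finitely generated $A\bowtie I$--algebra is a finitely generated $A$--algebra, hence catenary. So $A\bowtie I$ is universally catenary.
\item For the converse: if $A\bowtie I$ is universally catenary, I would use Ratliff's criterion for the finite extension $A\subseteq A\bowtie I$. Universal catenarity descends along a finite extension whose contraction hits every minimal prime.
\end{enumerate}

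\textbf{Step 4: minimal primes and the main obstacle.} The main obstacle is the descent in the converse, so I must check that the contraction condition holds. The map $A\bowtie I\to A$, $(a,a+i)\mapsto a$, is a surjective ring retraction of the inclusion. Hence $\operatorname{Spec}A\to\operatorname{Spec}(A\bowtie I)\to\operatorname{Spec}A$ is the identity, and every prime of $A$ is a contraction. To apply descent cleanly I would localize at primes, where universal catenarity is equivalent to the local rings being formally catenary. I would then use that $A_{\mathfrak p}$ is a retract of a finite extension that is universally catenary. Alternatively, I would note that a finitely generated $A$--algebra $R$ yields $R\otimes_A(A\bowtie I)$, which is finite over $R$ with $R$ as a retract, and check catenarity via height formulas. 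I expect this descent step to be the genuine work; everything else is bookkeeping.
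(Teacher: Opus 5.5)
Your forward direction is correct, and your diagnosis in Step 2 is right. With $f=g=\iota$ one has $I_0=I$, so literally specializing Proposition \ref{catenaryprop}, which is how the paper obtains this corollary, would give $A/I$ rather than $A$. That reading is in fact false: take $I=\mathfrak m$ in a non-universally-catenary local ring. Then $A/\mathfrak m$ is a field, but $A$ is a quotient of $A\bowtie\mathfrak m$, so $A\bowtie\mathfrak m$ cannot be universally catenary.

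The gap is in your converse. The principle you state, that universal catenarity descends along a finite extension whose contraction hits every minimal prime, is false. Nagata's classical example is a two-dimensional Noetherian local domain $R$ that is not universally catenary. Its normalization $\bar R$ is finite over $R$ and universally catenary (it is a semilocalization of a polynomial ring over a DVR), and $\operatorname{Spec}\bar R\to\operatorname{Spec}R$ is surjective. Your fallback (localizing, formal catenarity, height formulas for $R\otimes_A(A\bowtie I)$) is announced as ``the genuine work'' but never carried out. So as written, the implication ``$A\bowtie I$ universally catenary $\Rightarrow$ $A$ universally catenary'' is not established.

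The missing observation is one you already wrote down. The retraction $(a,a+i)\mapsto a$ is a surjective ring homomorphism with kernel $0\times I$, so $A\cong (A\bowtie I)/(0\times I)$, as in Proposition \ref{Prop.introd.}(1) with $f=g=\iota$. Universal catenarity passes to homomorphic images trivially. Via this surjection, every finitely generated $A$--algebra is a finitely generated $(A\bowtie I)$--algebra, hence catenary. Also, $A$ is Noetherian as a quotient of a Noetherian ring.

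That settles the converse in one line, with no Ratliff-type criterion, localization, or descent along finite maps. What is used is that $A$ is a quotient of $A\bowtie I$, not that $A\bowtie I$ is finite over $A$. Combined with your forward direction, which rests on Proposition \ref{homimageprop}, this proves the corollary.
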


 \end{document}